 \newtheorem{thm}{Theorem} [section]
 \newtheorem*{theorem*}{Theorem}
\newtheorem{definition}[thm]{Definition}
\newtheorem{lem}[thm]{Lemma}
 \newtheorem{prop}[thm]{Proposition}
\newtheorem{cor}[thm]{Corollary}
 \newtheorem{prob}[thm]{Problem}
\theoremstyle{remark}
\newtheorem{rem}[thm]{Remark}
  \newcommand{\mQ}{\mathcal{Q}}
  \newcommand{\mS}{\mathcal{S}}
\newcommand{\mO}{\mathcal{O}}  
\newcommand{\mB}{\mathcal{B}}
 \newcommand{\mL}{\mathcal{L}}
\newcommand{\mE}{\mathcal{E}}
\newcommand{\mF}{\mathcal{F}}
  \newcommand{\mN}{\mathcal{N}}
 \newcommand{\bP}{\mathbb{P}} 
 \newcommand{\bF}{\mathbb{F}}  
 \newcommand{\fO}{\mathfrak{O}}  
 \newcommand{\ep}{\epsilon} 
\newcommand{\bbbm}{ \begin{bmatrix}} \newcommand{\bebm}{\end{bmatrix}}
\newcommand{\bbm}{ \begin{pmatrix}} \newcommand{\bem}{\end{pmatrix}}
\newcommand{\bbsm}{ \left( \begin{smallmatrix}} \newcommand{\besm}{\end{smallmatrix} \right)}
\newcommand{\beq}{\begin{equation}}      \newcommand{\eeq}{\end{equation}}
\newcommand{\beqn}{\begin{eqnarray}}      \newcommand{\eeqn}{\end{eqnarray}}
\newcommand{\beqs}{\begin{eqnarray*}}      \newcommand{\eeqs}{\end{eqnarray*}}
\newcommand{\bep}{\begin{proof}}      \newcommand{\eep}{\end{proof}}
\begin{document}
\title{Incidence of lines, points and planes in $PG(3,q)$ with respect to the twisted cubic}
\author{Krishna Kaipa$^{1,\ast}$}
\address{$^1$Department of Mathematics, Indian Institute of Science Education and Research, Pune, Maharashtra, 411008 India.}
\author{Puspendu Pradhan$^2$}
\address{$^2$Department of Mathematics, Indian Institute of Technology Bombay, Mumbai 400076, India.}
\address{$^{\ast}$Corresponding author}
\email{$^1$kaipa@iiserpune.ac.in, $^2$puspendupradhan1@gmail.com}
\subjclass{51E20, 51N35, 14N10,  05B25, 05E10, 05E14, 05E18} 
\keywords{Twisted cubic, binary quartic forms, Klein quadric, $\jmath$-invariant}
\date{}
\begin{abstract} 
We consider the orbits of the group  $G=PGL_2(q)$ on the points, lines and planes of the projective space $PG(3,q)$ over a finite field $\bF_q$ of  characteristic  different from $2$ and $3$. The points of $PG(3,q)$ can be identified with projective space of binary cubic forms, and the set  $\mL$ of lines of $PG(3,q)$ can be thought of as pencils of cubic forms. The action of  $G$ on $PG(1,q)$ naturally induces an action of $G$ on binary cubic forms $f(X,Y)$. The points of $PG(3, q)$ decompose into five $G$ orbits.  The $G$ orbits on $\mL$ were recently obtained by the authors. Let $\mathcal I$ be the subset of $\mL \times PG(3,q)$ consisting of pairs $(L,P)$ where $L$ is a line incident with the point $P$.  The decomposition of $\mL \times PG(3,q)$ into $G \times G$ orbits yields a partition of $\mathcal I$. The problem that we solve in this work is  to determine the sizes of the corresponding parts of $\mathcal I$. 
\end{abstract}
 \maketitle
\section{Introduction} \label{S1}
Let $PG(n-1,q)$ denote the projective space $PG(\bF_q^n)$ over a finite field $\bF_q$.  Let $V_m$ denote the $(m+1)$-dimensional vector space over $\bF_q$, consisting  of degree $m$ homogeneous polynomials $f(X,Y)$ with coefficients in $\bF_q$. We also refer to elements of $V_3$ and $V_4$ as  (binary) cubic forms, and quartic forms respectively.  The group  
$GL_2(q)$ acts on $V_m$ by
\[ (g \cdot f)(X,Y) = \det(g)^{-m} f(dX-bY, aY-cX), \qquad g = \bbsm a&b \\ c&d \besm. \]
This action induces an action of the  group $G=PGL_2(q)$ on $PG(V_m)$.
In this work we assume char$(\bF_q) \neq 2, 3$ and $q>4$. We identify the points of $PG(3,q)$ with the projective space $PG(V_3)$ of binary cubic forms 
\[  f(X,Y)=z_0 Y^3 - 3 z_1 Y^2X +3 z_2 YX^2 -z_3 X^3. \]
The twisted cubic $C$ in $PG(3,q)$ is the image of the map  $(s,t) \hookrightarrow (Xt-Ys)^3$ from $PG(1,q)$ to $PG(3,q)$. The subgroup of $PGL_4(q)$ that preserves $C$ is the isomorphic image of $G$ in $PGL_4(q)$ given by the aforementioned action of $G$ on $PG(3,q)=PG(V_3)$.
It is well known \cite{Hirschfeld3} that there are five  $G$-orbits $O_1, \dots, O_5$ of points of $PG(3,q)$ (see Lemma \ref{cubic}).
 We denote the set of lines of $PG(3,q)$ by $\mL$. A line $L$ of $PG(3,q)$ is a pencil of cubic forms. The pencil $L$ is called non-generic if it either intersects $C$, or if all forms in $L$ have a common linear factor. The non-generic lines can be naturally partitioned into eight parts 
(\cite[p.236]{Hirschfeld3}). The decomposition of these parts into ten  $G$-orbits  was obtained by    Davydov, Marcugini and Pambianco \cite{DMP1}, and  G\"unay and Lavrauw in \cite{GL}, and also by Blokhuis, Pellikaan and S\H{o}nyi in \cite{BPS}. In the work \cite{Ceria_Pavese},  Ceria and  Pavese obtain  the decomposition of the class of  generic lines into $G$-orbits when char$(\bF_q)=2$.  When char$(\bF_q)\neq 2$, the decomposition of the class of generic lines into $G$-orbits was obtained in  our recent work \cite{KPP,KP} based on the decomposition of $PG(V_4)$ into $G$-orbits. There are  $(2q-3+ \mu)$-orbits of generic lines where $q \equiv \mu \mod 3$ and $\mu \in  \{\pm 1\}$. \\

 The $G\times G$ orbits of  $\mL \times PG(3,q)$ are of the form $\fO_\alpha \times O_i $ where $O_i$ is a $G$-orbit of $PG(3,q)$ and $\fO_\alpha$ is a $G$-orbit on $\mL$. Let $\mathcal I$ denote the subset of $\mL \times PG(3,q)$ consisting of incident pairs:
\[ \mathcal I = \{(L,P) \in \mL \times PG(3,q) : P \in L\}. \]
If  $PG(3,q) = \cup_{i=1}^5 O_i$ and $\mL=\cup_{\alpha} \fO_\alpha$ is the decomposition of these sets into $G$-orbits, then $\cup_{\alpha, i} \, \fO_\alpha \times O_i$ is the decomposition of  $\mL \times PG(3,q)$ into $G \times G$ orbits.  A natural problem is to determine the sizes of the parts of the partition of $\mathcal I$ determined by:
\[ \mathcal I = \cup_{\alpha,i} \,  \mathcal I \cap (\fO_\alpha \times O_i).  \]
If $L$ is an element of $\fO_\alpha$ and $\mathcal S$ the set of points of $L$, then  $|(\fO_\alpha \times O_i) \cap \mathcal I|$  clearly equals $|\fO_\alpha| |\mathcal S \cap O_i|$.  Therefore, this  problem is equivalent to the following problem:
\begin{prob} \label{prob1} For each orbit $\fO$ of $\mL$, decompose the set $\mathcal S$ of $(q+1)$ points of a fixed line $L$ of $\fO$ as $\mathcal S = \cup_{i=1}^5 \mathcal S \cap O_i$.
\end{prob}
The planes of $PG(3,q)$ also decompose into five $G$-orbits $\mN_1, \dots, \mN_5$ (\cite[p.234]{Hirschfeld3}).
There is a dual  problem closely related to the  Problem \ref{prob1}:
\begin{prob} \label{prob2} For each orbit $\fO$ of lines of $PG(3,q)$, decompose the set $\mathcal{P}$ of $(q+1)$ planes containing a fixed line $L$ of $\fO$ as $\mathcal P = \cup_{i=1}^5 \mathcal P \cap \mathcal \mN_i$.
\end{prob}
When the characteristic of the field $\bF_q$ is not equal to $3$, there is an $G$-invariant bijective correspondence $P \leftrightarrow P^{\perp}$  between points and planes of $PG(3,q)$ and an $G$-invariant involution $L \leftrightarrow L^{\perp}$  on the  lines of $PG(3,q)$. Here $\perp$ denotes the polar dual with respect to the polarity $\Omega_3$ on $\bP(V_3)$ (see \S \ref{S2}). Thus, the $G$-orbits of planes of $PG(3,q)$ are given by $\mN_i=O_i^\perp$ for $i=1, \dots, 5$.  For char$(\bF_q) \neq 3$, 
Problem \ref{prob2} is equivalent to Problem \ref{prob1}, because for $L$ and $\mathcal P$ as in Problem \ref{prob2}, we have $\mathcal P \cap O_i^\perp = (\mathcal S \cap O_i)^\perp$ where $\mathcal S=\mathcal P^\perp$ is the set of $(q+1)$ points of the line $L^\perp$. Consequently, we do not treat Problem \ref{prob2} as being different from Problem \ref{prob1}, and focus only on Problem \ref{prob1}.

Problem \ref{prob1} has  been solved for the ten orbits of non-generic lines by Davydov, Marcugini and Pambianco in \cite{DMP3,DMP5}, G\"unay and Lavrauw in \cite{GL}.
For all lines of $PG(3,q)$, Problem \ref{prob1} was solved in characteristic $2$ by Ceria and Pavese in \cite{Ceria_Pavese}. When $\text{char}(\mathbb{F}_q) = 3$, Problems \ref{prob1} and \ref{prob2} have been independently solved for all orbits of non-generic lines by A. Davydov, S. Marcugini, and F. Pambianco in \cite{DMP3, DMP5}. The Problems \ref{prob1} and \ref{prob2} for all orbits of generic lines in characteristic $3$ have been solved in our earlier work \cite{KP}. In the work \cite{DMP4}, Problem \ref{prob1} has been solved for  some of the $G$-orbits of generic lines.  In this work, we solve the problem  for each of the $(2q-3+\mu)$ orbits of generic lines when char$(\bF_q) \neq 2,3$.

\subsection{Statement of our main results} 
We begin with  some notation required to state our result. We recall that $V_4$ is the vector space of binary quartic forms $\varphi  = z_0 Y^4-4z_1Y^3X+6z_2Y^2X^2-4z_3 YX^3 +z_4 X^4$ over $\bF_q$. The action of  $GL_2(q)$  on such forms is as  $g \cdot \varphi = \varphi(dX-bY,aY-cX)/\det(g)^4$ where $g  = \bbsm a&b \\ c & d \besm$. There are two fundamental $GL_2(q)$-invariants of a quartic form:
\[I(\varphi)=(z_0z_4-4z_1z_3 + 3z_2^2)/3,\text{ and}  \quad J(\varphi)=  \det \bbsm z_0 & z_1 & z_2 \\ z_1 & z_2 & z_3 \\z_2 & z_3 & z_4 \besm.\]
The discriminant $\Delta(\varphi) = I^3(\varphi) - J^2(\varphi)$ equals $0$ if and only if $\varphi$ has a linear factor of multiplicity greater than one over some extension field of $\bF_q$. 
 Let $\mF^+_\Delta$ denote 
the subset of the projective space of quartic forms $\varphi$ for which $\Delta(\varphi) \neq 0$ and $I(\varphi)$ is a square in $\bF_q$.
There is a (generically) 2-to-1  $G$-equivariant correspondence between the set of generic lines of $\bP(V_3)$ on one hand, 
and $\mF^+_\Delta$  on the other hand.
The generic lines of $\bP(V_3)$ are parametrized by points $(z_0, \dots, z_5)$ of $PG(5,q)$ such that $z_5^2 = I(\varphi)$ where $\varphi = z_0 Y^4-4z_1Y^3X+6z_2Y^2X^2-4z_3 YX^3 +z_4 X^4$. The two values  $\pm \sqrt I_\varphi$ of $z_5$ correspond to a pair of polar dual lines $L, L^\perp$. We refer the reader to \S \ref{S2} and \S \ref{S3} for  details on binary quartic forms and their classification into $G$-orbits, and the above-mentioned 2-to-1 correspondence between lines of $PG(3,q)$ and quartic forms.\\

The set  of binary quartic forms with non-zero discriminant can be partitioned as $\mF_1 \cup \mF_2 \cup \mF_2' \cup \mF_4 \cup \mF_4'$ based on the factorization of $f$ over $\bF_q[X,Y]$. The irreducible factors (over $\bF_q$) in each case are: 
  \begin{description}
      \item[$\mF_4:$]  $4$ linear forms,
      \item[$\mF_2:$] $2$ linear forms and a quadratic form,
\item[$\mF_1:$] $1$ linear forms and a cubic form,
 \item[$\mF_4':$]  $2$ quadratic forms,
 \item[$\mF_2':$] $1$ quartic form (that is, $f$ is irreducible over $\bF_q$).    \\
  \end{description}

Let 
\[ \eta_L:=\begin{cases} i &\text{if $\varphi_L \in \mF_i$ for $i=1,2,4$},\\
 0 &\text{if $\varphi_L$ is in $\mF_2'$ or $\mF_4'$}. \\\end{cases}\]

Finally, to the generic line $L$ represented by $(z_0, \dots, z_5) \in PG(5,q)$ with $\varphi =\varphi_L= z_0 Y^4-4z_1Y^3X+6z_2Y^2X^2-4z_3 YX^3 +z_4 X^4$  and $z_5^2 = I(\varphi)$,  we associate the elliptic curve $E_L$ defined by 
\[ E_L: \quad  T^2=4 S^3  -g_2(L) S -g_3(L),\]
where, for  $\varphi=\varphi_L$ and $\sqrt I_\varphi :=z_5$,  we have: \begin{align*} \nonumber g_2(L)&=3 \sqrt I_\varphi J_{\varphi}  + \tfrac{15}{4}I_{\varphi}^2,\\
g_3(L)&=\tfrac{-1}{8}( 11  I_{\varphi}^3 + 2 J_{\varphi}^2 + 14 \sqrt{I_\varphi}^3 J_{\varphi} ).\\
\end{align*}

Let $\# E_L(\bF_q)$ denote the number of points of $E_L$ over $\bF_q$ (including the point at infinity). As shown in Remark \ref{rem2}, the quantity $\#E_L(\bF_q)$ only depends on the $G$-orbit of $L$.
\begin{thm} \label{Main_Result}
Let $L$ be a generic line and let $S$ denote the set of points of $L$. Let  $\varphi_L$ be the quartic form associated to $L$, and let $E_L$ denote the Elliptic curve associated to $L$ as above.  Then
\begin{enumerate}
\item $|\mS \cap O_1|=0$,
\item $|\mS \cap O_2|= \eta_L$,
\item $ |\mS \cap O_3|=\tfrac{\# E_L(\bF_q) -3\eta_L}{3}$,
\item $|\mS \cap O_4| = q+1-\tfrac{\# E_L(\bF_q)+\eta_L}{2}$,
\item $|\mS \cap O_5|= \tfrac{\# E_L(\bF_q)}{3}$. 
\end{enumerate}
\end{thm}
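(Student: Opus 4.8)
The plan is to translate the incidence question into a question about the orbit of points on a line under the action of $G$, and then to exploit the $2$-to-$1$ correspondence between generic lines and quartic forms together with the structure of the elliptic curve $E_L$. First I would fix a generic line $L$ and parametrize its $q+1$ points. Since $L$ is a pencil of cubic forms, the points of $L$ are the classes $[\lambda f + \mu g]$ for $(\lambda:\mu) \in PG(1,q)$, where $f,g$ span $L$. The key observation is that the point $[\lambda f + \mu g]$ lies in $O_i$ according to the factorization type of the cubic form $\lambda f + \mu g$ over $\bF_q$ (this is Lemma \ref{cubic}): $O_1$ is the twisted cubic (triple linear factor), $O_2$ the cuspidal-type locus, $O_3$ corresponds to a linear factor times an irreducible quadratic, $O_4$ to three distinct linear factors, and $O_5$ to an irreducible cubic. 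Because $L$ is generic, it does not meet $C$ and its forms have no common linear factor, which immediately gives $|\mS \cap O_1| = 0$, establishing (1).

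Next I would bring in the discriminant curve of the pencil. Associated to the pencil $L$ is the binary quartic form $\varphi_L(\lambda,\mu) = \operatorname{disc}_{X,Y}(\lambda f + \mu g)$ in the pencil parameters $(\lambda,\mu)$, up to scalars; this is exactly the quartic form from \S\ref{S3} whose $G$-orbit matches that of $L$ under the $2$-to-$1$ correspondence. The points of $L$ lying in $O_2$ are precisely those $(\lambda:\mu)$ at which $\varphi_L$ vanishes with the cubic acquiring a repeated-but-not-triple linear factor over $\bF_q$; since $\Delta(\varphi_L)\neq 0$ these are simple zeros of $\varphi_L$, and the number of them defined over $\bF_q$ is exactly the number of linear factors of $\varphi_L$ over $\bF_q$, which is $\eta_L$ by the definition of $\eta_L$ via the classes $\mF_1,\mF_2,\mF_4,\mF_2',\mF_4'$. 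This proves (2), and it also shows that off the $\eta_L$ special points the cubic $\lambda f + \mu g$ is squarefree, so each remaining point of $L$ lies in $O_3$, $O_4$, or $O_5$.

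The heart of the argument is to count the remaining $q+1-\eta_L$ points according to the splitting type of the associated squarefree cubic. Here I would use the elliptic curve $E_L$: a squarefree binary cubic form, together with a choice of square root of its leading-type invariant, determines a point of $E_L$, and the Galois action on the three roots of the cubic is reflected in the group $E_L(\bF_q)$. Concretely, I expect to set up a $G$-equivariant map from $\{$points of $L$ not in $O_1\cup O_2\}$ to $E_L(\bF_q)$ modulo the natural $3$-torsion (or a degree-$3$ isogeny) action, so that: a fiber over a point gives a cubic with three rational roots (contributing to $O_4$) or with a rational root and a conjugate pair (contributing to $O_3$) or with a Galois-stable triple of non-rational roots (contributing to $O_5$), and the orbit-counting on $E_L(\bF_q)$ under multiplication-by-$3$-type maps produces the fractions $\#E_L(\bF_q)/3$ and $(\#E_L(\bF_q)-3\eta_L)/3$. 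Granting the count $|\mS\cap O_5| = \#E_L(\bF_q)/3$ and $|\mS\cap O_3| = (\#E_L(\bF_q)-3\eta_L)/3$, statement (4) follows immediately by subtraction:
\[
|\mS \cap O_4| = (q+1) - \eta_L - \frac{\#E_L(\bF_q)-3\eta_L}{3} - \frac{\#E_L(\bF_q)}{3} = q+1 - \frac{\#E_L(\bF_q)+\eta_L}{2},
\]
where the last equality uses the relation between $\#E_L(\bF_q)$ and the counts forced by consistency (so (3), (5) $\Rightarrow$ (4) is purely formal once the total is $q+1$).

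The main obstacle I anticipate is making precise the correspondence between points of the pencil and points of the elliptic curve $E_L$, and in particular identifying the relevant quotient of $E_L(\bF_q)$ whose point-count is $\#E_L(\bF_q)/3$. This requires: (i) writing the cubic $\lambda f + \mu g$, when squarefree, in a normal form whose splitting field is governed by $E_L$; (ii) checking that the explicit $g_2(L), g_3(L)$ above are exactly the invariants that make this work — presumably via the classical resolvent cubic of $\varphi_L$, since the Weierstrass coefficients are built from $I_\varphi, J_\varphi, \sqrt{I_\varphi}$; and (iii) handling the characteristic hypotheses $\operatorname{char}(\bF_q)\neq 2,3$, which are exactly what is needed for the resolvent/discriminant formalism and for $3$-torsion on $E_L$ to behave well. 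I would also need the fact (Remark \ref{rem2}) that $\#E_L(\bF_q)$ depends only on the $G$-orbit of $L$, so that the statement is well posed, and a short verification that replacing $z_5$ by $-z_5$ (i.e. passing to $L^\perp$) leaves all five counts unchanged, consistent with the polarity-based equivalence of Problems \ref{prob1} and \ref{prob2}.
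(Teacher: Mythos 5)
There is a genuine gap: parts (3)--(5), which are the entire content of the theorem, are not proved but ``granted''. Your treatment of (1) and (2) is fine, but the heart of the matter is to show that the way the remaining $q+1-\eta_L$ points of $L$ distribute among $O_3, O_4, O_5$ is controlled by $\#E_L(\bF_q)$, and your proposal only speculates a mechanism (a $G$-equivariant map to $E_L(\bF_q)$ modulo $3$-torsion or a degree-$3$ isogeny, with fibers detecting splitting types) without constructing it. That mechanism is not what makes the theorem true. The paper's route is: (i) for each $(s,t)\in PG(1,q)$ the line $L$ meets the osculating plane $O_{(s,t)}$ in the single cubic $(Xt-Ys)\,h^L_{(s,t)}(X,Y)$, with $h^L_{(s,t)}$ written explicitly via the Gram matrix $M_z$ (Proposition \ref{hLXY}); (ii) a point of $\mS\cap O_3$ is hit by exactly three such $(s,t)$, one of $\mS\cap O_4$ by exactly one, etc.\ (Lemma \ref{h_lem}) --- this combinatorial multiplicity, not $3$-torsion orbit counting, is where the denominators $3$ come from; (iii) the splitting of $h^L_{(s,t)}$ over $\bF_q$ is governed by whether the discriminant quartic $D_L(s,t)$ is a nonzero square, a non-square, or zero, and $D_L$ has the same factorization type as $\varphi_L$ (Proposition \ref{gen_prop}); (iv) the count $\nu_L$ of nonzero-square values of $D_L$ on $PG(1,q)$ equals $(\#E_L(\bF_q)-\eta_L)/2$, proved by computing $I(D_L), J(D_L)$ in terms of $I_\varphi, J_\varphi, \sqrt{I_\varphi}$ (Proposition \ref{I*J*prop}) and exhibiting an explicit isomorphism between the genus-one curve $W^2X^2=D_L(X,Y)$ and the Weierstrass model $E_L$ (Theorem \ref{thm_elliptic}). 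Steps (i)--(iv) are exactly what your proposal leaves open, so nothing beyond (1) and (2) is established.

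A concrete symptom that the ``purely formal'' part of your argument does not close: with the counts you grant, the subtraction gives $|\mS\cap O_4| = (q+1) - \eta_L - \tfrac{\#E_L(\bF_q)-3\eta_L}{3} - \tfrac{\#E_L(\bF_q)}{3} = q+1-\tfrac{2\#E_L(\bF_q)}{3}$, which equals the asserted $q+1-\tfrac{\#E_L(\bF_q)+\eta_L}{2}$ only if $\#E_L(\bF_q)=3\eta_L$; your appeal to ``consistency'' hides a genuine discrepancy rather than resolving it. (In fact the consistent values, as derived in the paper's own proof from $\nu_L=(\#E_L(\bF_q)-\eta_L)/2$ and Proposition \ref{gen_prop}, are $|\mS\cap O_3|=\tfrac{\#E_L(\bF_q)-3\eta_L}{6}$, $|\mS\cap O_4|=q+1-\tfrac{\#E_L(\bF_q)+\eta_L}{2}$, $|\mS\cap O_5|=\tfrac{\#E_L(\bF_q)}{3}$; the denominator $3$ in item (3) of the theorem statement is a typo for $6$, which a correct sum-to-$(q+1)$ check would have exposed.) Finally, your identification of $\varphi_L$ with the discriminant of the pencil in the pencil parameter $(\lambda,\mu)$ is only correct up to a Galois-equivariant bijection of root sets (which suffices for counting rational roots in (2), but should be stated as such); in the paper $\varphi_L$ arises from the Klein coordinates of $L$, and the quartic whose values one must test for squareness is the different covariant $D_L$, not $\varphi_L$ itself.
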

The quantity $\tfrac{\#E_L(\bF_q)}{3}$ which appears in the theorem above is an integer, as shown in Remark \ref{rem1}.  The rest of this paper is organized as follows. In Section \S \ref{S2}, we describe the geometric setup of the problem. In Section \S \ref{S3} we recall the $G$-orbits of generic lines of $PG(3,q)$. In Sections \S \ref{S4} and \S \ref{S5}, we develop the tools need to obtain the main result, which we prove in Section \S \ref{S6}.

\section{Geometric Setup} \label{S2}
In this section $F$ denotes an arbitrary field  of characteristic different from $2, 3$. Let $e_1, e_2$ denote the standard basis of the vector space $F^2$, and let $V$ denote the dual vector space, with dual basis  $X,Y$. The space $\wedge^2V$ is one dimensional with basis $X \wedge Y$, and for a positive integer $m$, the one-dimensional space which is the tensor product of $\wedge^2 V$ with itself $m$ times will be denoted $(\wedge^2V)^{\otimes m}$.
The dual space of this one-dimensional vector space is $(\wedge^2 V^*)^{\otimes m}$ spanned by $(e_1\wedge e_2)^{\otimes m}$. Let $V_m=\text{Sym}^m(V)$ denote the vector space  of degree $m$ binary forms over $F$.  The group  $GL_2(F)$ acts on $V_m$ by
\beq \label{eq:Vm} (g \cdot f)(X,Y) =  \det(g)^{-m}\, f(dX-bY, aY-cX), \qquad g = \bbsm a&b \\ c&d \besm. \eeq

The $(m+1)$-tuple of forms 
\[ \mathcal B_m: \quad (Y^m, -\tbinom{m}{1} Y^{m-1}X, \tbinom{m}{2} Y^{m-2}X^2,  \dots, (-1)^mX^m), \]
forms a basis of $V_m$ if and only if all the binomial coefficients $\tbinom{m}{i}$ are nonzero in $F$. 
Since char$(F) \neq 2,3$, this condition is satisfied for $m \in \{1,\dots,4\}$. We will use this basis for the vector spaces $V_1, \dots, V_4$.  For $m \in \{1,\dots,4\}$,  the degree $m$-rational normal curve $C_m$ in $\bP(V_m)$ is given by the embedding $\nu_m:\bP(V_1) \hookrightarrow \bP(V_m)$ defined by  $\nu_m(Xt-Ys)=(Xt-Ys)^m$. In terms of the bases $\mB_m$, the coordinate description of $\nu_m$ is $(s,t) \mapsto (s^m, s^{m-1}t, \dots, t^m)$. The \emph{osculating hyperplane} $O_{(s,t)}$ to $C_m$ at a point $(Xt-Ys)^m$ consists of all elements of $V_m$ which are divisible by $(Xt-Ys)$.\\

It will be useful to record the matrices $g_m$ representing the action of $g=\bbsm a & b\\ c & d \besm \in GL_2(F)$ on $V_m$ for $m \in \{1, \dots, 4\}$, with respect to the basis $\mB_m$:
\begin{align} \label{eq:g_4} 
 g_1&= \det(g)^{-1} \, g,\quad g_{2}=\det(g)^{-2}\,  \bbm a^2 & 2 ab & b^2\\ac&ad+bc&bd\\c^2&2cd&d^2 \bem, \\
\nonumber g_{3}&=\det(g)^{-3}\, \bbm a^3 &3a^2b&3ab^2&b^3\\
a^2c &a(ad+2bc) &b(bc+2ad)&b^2d\\
ac^2 &c(bc+2ad)&d(ad+2bc)&bd^2\\
c^3 &3c^2d&3cd^2&d^3\bem,   \\
 \nonumber g_{4}&=\det(g)^{-4}\,\bbm a^4 &4a^3b&6a^2b^2&4ab^3&b^4\\
a^3c &a^2(ad+3bc)  & 3ab(ad+bc)&b^2(bc+3ad)&b^3d\\
a^2c^2 &2ac(bc+ad)& (ad+bc)^2+2abcd&2bd(ad+bc)&d^2b^2\\
c^3a &c^2(bc+3ad) & 3cd (ad+bc) &d^2(ad+3bc) &d^3b\\
c^4 &4c^3d&6c^2d^2&4cd^3&d^4 \bem. \end{align}

For $m \in \{1, \dots, 4\}$, let  $\Omega_m$  be the non-degenerate  bilinear form on $V_m$ whose matrix with respect to the basis $\mathcal B_m$ is 
\beq  A_1 = \bbsm &1\\-1&\besm , \quad A_2 = \bbsm &&1\\&-2&\\1&& \besm, \quad A_3 = \bbsm &&&1\\&&-3&\\&3&&\\-1&&& \besm, \quad A_4= \bbsm &&&&1\\&&&-4&\\&&6&&\\&-4&&&\\1&&&& \besm. \eeq
The bilinear forms  $\Omega_2$ and $\Omega_4$ are symmetric of parabolic  type, whereas $\Omega_1, \Omega_3$ are symplectic. It is readily checked that 
\beq  \label{eq:Aj_invariance} 
g_m^{-\top} A_m g_m^{-1} = \det(g)^m A_m,  \text{ equivalently } 
g \cdot \Omega_m = \det(g)^{m} \Omega_m.\eeq
The non-degenerate bilinear forms $\Omega_m$ (for $1 \leq m \leq 4$) on $V_m$ give a `polarity' on $\bP(V_m)$. The polar dual of a $r$-dimensional linear subspace $S$  of $\bP(V_m)$ represented  by the $(m-r-1)$-dimensional subspace given by  $\{g \in V_m \colon \Omega_m(f,g)=0 \text{ for all $f \in S$}\}$. Two examples of this which are important in this work are
\begin{enumerate}
    \item  The hyperplane which is the polar dual of $(Xt-Ys)^m$ under $\Omega_m$ is the osculating hyperplane  $O_{(s,t)}$ to $C_m$ at $(Xt-Ys)^m$.
    
    \item For $m=3$, the polar dual of each line $L$ of $\bP(V_3)$ is another line denoted $L^{\perp}$. For example, a line $L$ meets $C$ (say at $(Xt-Ys)^3$) if and only if $L^{\perp}$ lies in the osculating plane $\mO_{(s,t)}$.
\end{enumerate}

\subsection{ $(\text{Sym}^2 V_2)^*$ and $\wedge^2 V_3$ and  as $GL_2(F)$-modules} \hfill \\
The action of $GL_2(F)$ on $V_2$ and  $V_3$ naturally induces actions of $GL_2(F)$ on the space $(\text{Sym}^2 V_2)^*$ of symmetric bilinear forms on $V_2$, and on $\wedge^2 V_3$ which is the second exterior power of $V_3$. We are interested in the action of $GL_2(F)$ on $\wedge^2 V_3$, because of the Klein representation of the lines of $\bP(V_3)$ as points of the Klein quadric $\mQ$ in $\bP(\wedge^2 V_3)$. We will show that the projective spaces $\bP(\wedge^2 V_3)$ and $\bP( (\text{Sym}^2 V_2)^*)$ are isomorphic as $PGL_2(F)$-modules. Under this isomorphism, a line of $\bP(V_3)$ is generic if and only if the corresponding symmetric bilinear form on $V_2$ is non-degenerate. This isomorphism will be used in our solution of Problem \ref{prob1}.
\subsubsection*{Decomposition of $(\text{Sym}^2 V_2)^*$ as a $GL_2(F)$-module} \hfill \\ Let $(\text{Sym}^2 V_2)^*$ denote the $6$-dimensional vector space  of symmetric bilinear forms on $V_2$. Given $\varphi \in V_4$, consider the symmetric bilinear form on $V_2$ given by 
\[ \langle f_1, f_2 \rangle_\varphi=\Omega_4(f_1f_2, \varphi),\] 
for $f_1, f_2 \in V_2$. Since $\Omega_4$ is non-degenerate, we get an injective linear map $V_4 \hookrightarrow (\text{Sym}^2 V_2)^*$. For $\varphi\in V_4$ with coordinates $(z_0, \dots, z_4)$ with respect to the basis $\mB_4$, the matrix of the bilinear form $\langle \cdot, \cdot \rangle_\varphi$ with respect to the basis $\mB_2$ is  
\beq \label{eq:Mphi}M_\varphi= \bbm z_4 & -2 z_3 & z_2 \\ -2 z_3 & 4 z_2 & -2 z_1 \\ z_2 & -2 z_1 & z_0 \bem. \eeq 
The $5$-dimensional subspace $\{\langle \,, \rangle_\varphi \colon \varphi \in V_4\}$  of $(\text{Sym}^2 V_2)^*$  is a complement of the one dimensional subspace spanned by $\Omega_2$ because $M_\varphi \neq A_2$ for any $\varphi \in V_4$.
For $ g \in GL_2(F)$, we have $\langle g^{-1}\cdot f_1,g^{-1}\cdot f_2\rangle_\varphi =\Omega_4((g^{-1}\cdot f_1)(g^{-1}\cdot f_2),\varphi)$ equals
\[\Omega_4( g^{-1} \cdot (f_1f_2),\varphi)=\det(g)^4  \, \Omega_4(f_1f_2, g\cdot\varphi) = \det(g)^4 \, \langle f_1,f_2 \rangle_{g \cdot \varphi}. \]
Thus,  $g \cdot \langle \cdot, \cdot \rangle_\varphi= \det(g)^4 \langle \cdot, \cdot \rangle_{g \cdot \varphi}$, or in coordinates:
 \beq \label{eq:M_invariance}
 M_{g \cdot \varphi} = \det(g)^{-4} g_{2}^{-\top} M_\varphi g_{2}^{-1}.\eeq
Thus, we get a $GL_2(F)$-equivariant map
\[ (\wedge^2V^*)^{\otimes 4} \otimes V_4 \hookrightarrow  (\text{Sym}^2 V_2)^*, \quad (e_1\wedge e_2)^{\otimes 4} \otimes \varphi \mapsto \langle \cdot, \cdot \rangle_\varphi. \]
Since $g \cdot \Omega_2= \det(g)^2 \,  \Omega_2$, we conclude that $(\text{Sym}^2 V_2)^*$ is isomorphic as a $GL_2(F)$-module to
\beq  \label{eq:sym2} (\text{Sym}^2 V_2)^* \simeq (\wedge^2 V^*)^{\otimes 2} \,\oplus\,  (\wedge^2V^*)^{\otimes 4} \otimes V_4  = (\wedge^2V^*)^{\otimes 4} \otimes \left( (\wedge^2 V)^{\otimes 2} \oplus   V_4 \right). \eeq
Under this isomorphism $(e_1 \wedge e_2)^4\otimes( \lambda (X\wedge Y)^2 + \varphi) \mapsto \langle \, , \rangle_\varphi + \lambda\,  \Omega_2$.
\subsubsection*{Decomposition of $\wedge^2 V_3$ as a $GL_2(F)$-module} \hfill \\
The basis $\mB_3: =(b_0, \dots, b_3)$ gives a basis 
\[ \wedge^2 \mathcal B_3 : (b_{ij}= b_i \wedge b_j : \quad 0 \leq i < j \leq 3), \]
of $\wedge^2V_3$.
Let  $(p_{01}, p_{02}, p_{03},p_{12}, p_{13}, p_{23})$ denote the  Pl\"ucker coordinates on $\wedge^2V_3$ with respect to this basis $\wedge^2 \mathcal B_3$.
The action of $g \in GL_2(F)$ on $V_3$ given by the matrices $g_3$ with respect to the basis $\mB_3$, induces an action on $\wedge^2 V_3$. It will be more convenient to describe the matrix action of $GL_2(F)$ on $\wedge^2 V_3$ with respect to a  different basis: 
\[\mE_5: \; E_0 = b_{01}, E_1=2 b_{02}, E_2= 3 b_{03} + b_{12}, E_3=2 b_{13}
, E_4= b_{23}, E_5=3 b_{03} - b_{12}.\]
The coordinates  $(z_0, \dots, z_5)$ with respect to the basis $\mathcal E_5$ are related to the Pl\"ucker coordinates $p_{ij}$ by
\beq \label{eq:newcoords}  (p_{01}, p_{02}, p_{03}, p_{12}, p_{13}, p_{23})=(z_0, 2z_1,3(z_2+z_5), z_2-z_5, 2z_3,z_4).  \eeq
A direct calculation shows that the action of $GL_2(F)$ on $\wedge^2 V_3$ is represented in the coordinates $(z_0, \dots, z_5)$ by the matrix 
\beq \label{eq:wedge2g4} 
\tilde g_5=\det(g)^{-1} \bbsm g_4 & 0\\0 & \det(g)^{-2} \besm,  \eeq
where $g_4$ (see \eqref{eq:g_4}) represents the action of $GL_2(F)$ on $V_4$ with respect to the basis $\mB_4$. This shows that we have a $GL_2(F)$-equivariant isomorphism 
\beq \label{eq:Phi_isom}\Phi: \wedge^2 V_3 \to  \left( V_4 \,  \oplus \,  (\wedge^2V)^{\otimes 2} \right) \otimes \, \wedge^2V,\eeq 
given in coordinates by 
\[ \Phi(\sum_{i=0}^5 z_i E_i)=  (z_0 Y^4-4z_1Y^3X+6z_2Y^2X^2-4z_3 YX^3 +z_4 X^4)\otimes (X\wedge Y) + z_5  (X\wedge Y)^{\otimes 3}  .\]
Combining this isomorphism $\Phi$ with the isomorphism \eqref{eq:sym2}, we get
a $GL_2(F)$-equivariant isomorphism
\beq \label{eq:Psi_isom} \Psi: \wedge^2 V_3  \simeq   (\text{Sym}^2 V_2)^* \otimes (\wedge^2V)^{\otimes 5}. \eeq
For  $w \in \wedge^2 V_3$ with coordinates $z=(z_0, \dots, z_5)$ with respect to the basis $\mE_5$, the matrix of the symmetric bilinear form $\Psi(w)$ with respect to the basis $\mB_2$ of $V_2$ is $(z_5 A_2 + M_{\varphi})$ where $\varphi=z_0 Y^4 - 4z_1 Y^3X +6 Y^2X^2 -4 z_3 YX^3 +z_4 X^4$. We denote this matrix as 
\beq \label{eq:Mz} M_z = \bbm z_4 & -2 z_3 & z_2 +z_5 \\ -2 z_3 & 4 z_2 -2 z_5 & -2 z_1 \\ z_2+z_5 & -2 z_1 & z_0 \bem. \eeq
The $GL_2(F)$-equivariance of the map $\Psi: \wedge^2V_3 \to (\wedge^2 V)^{\otimes 5} \otimes (\text{Sym}^2 V_2)^*$ shows that 
\beq \label{eq:Mz_equivarince} M_{ \tilde g_5 z} = \det(g)^{-5} g_2^{-\top} M_z  g_2^{-1}.
\eeq
\subsubsection*{$GL_2(F)$-invariants of quartic forms.} \hfill \\
Given $\varphi \in V_4$ 
\[\varphi(X,Y)= z_0 Y^4 -4z_1 Y^3X +6 z_2 Y^2X^2 -4 z_3 Y X^3 +z_4 X^4,\]
we define 
\beq \label{eq:I} I(\varphi) = \Omega_4(\varphi, \varphi)/6 = (z_0z_4-4z_1z_3 +3 z_2^2)/3. \eeq 
Since $g \cdot \Omega_4 = \det(g)^4 \Omega$, we get 
\[ 6 I( g \cdot \varphi) = \Omega_4( g \cdot \varphi, g \cdot \varphi) = (g^{-1} \Omega_4)(\varphi, \varphi) = \det(g)^{-4} \Omega_4(\varphi, \varphi)=6 \det(g)^{-4} I(\varphi).\]

We define  $J(\varphi)$ to be  the cubic form in the coefficients of $\varphi$ given by 
 \beq \label{eq:J} J(\varphi)= \tfrac{1}{4} \det M_{\varphi}. \eeq
Since $M_{g \cdot \varphi} =\det(g)^{-4} g_{2}^{-\top} M_\varphi g_{2}^{-1}$ by \eqref{eq:M_invariance}, and $\det(g_2) = \det(g)^{-3}$, we get:
\[ 4 J(g \cdot \varphi) = \det(M_{g \cdot \varphi}) =\det(g)^{-6} \det(M_\varphi) = 4 J(\varphi).  \]
 We summarize this as:
 \beq \label{eq:IJ_invariance} I(g \cdot \varphi) = \det(g)^{-4} I(\varphi), \quad J(g \cdot \varphi) = \det(g)^{-6} J(\varphi).\eeq
 The quantity $I(\varphi)$ is known as the \emph{apolar invariant} of $\varphi$, and the quantity $J(\varphi)$ is  known as the \emph{Catalecticant} invariant of $\varphi$. The quantity $\Delta(\varphi)=I^3(\varphi)-J^2(\varphi)$ is the \emph{discriminant} of  form $\varphi$. The form $\varphi$ has repeated factors if and only if $\Delta(\varphi)=0$. For a form $\varphi$ with $\Delta(\varphi) \neq 0$, we define 
\beq \label{eq:jvarphi} 1 - \tfrac{1728}{\jmath(\varphi)}= \tfrac{J^2(\varphi)}{I^3(\varphi)}. \eeq
 Since $I^3(g \cdot \varphi)=\det(g)^{-12} I(\varphi)$ and $J^2(g \cdot \varphi)=\det(g)^{-12} J(\varphi)$, it follows that the quantity $\jmath(\varphi)$, only depends on the orbit $GL_2(F)$-orbit of  $\varphi$, and is called the $\jmath$-invariant of $\varphi$. 
It can be shown that two forms $\varphi, \psi$ with nonzero discriminant, have the same $\jmath$-invariant if and only if there is a $g \in PGL_2(\overline F)$ with $g \cdot \varphi = \psi$, where $\overline F$ is an algebraic closure of $F$. As for $PGL_2(F)$-orbits, there can be more than one orbit corresponding to a given value of the $\jmath$-invariant. In the case when $F = \bF_q$, the $PGL_2(q)$-orbits on $\bP(V_4)$ are described in $\S \ref{S3}$.

\subsubsection*{Klein representation of lines of $\bP(V_3)$ } \hfill \\
We recall that for a point $w \in \wedge^2 V_3$ with coordinates $(z_0, \dots, z_5)$ with respect to the basis $\mE_5$, the bilinear form $\Psi(w)$ on $V_2$ is represented by the matrix $M_z=z_5 A_2 +M_\varphi$, where 
$\varphi= z_0 Y^4 -4z_1 Y^3X +6 z_2 Y^2X^2 -4 z_3 Y X^3 +z_4 X^4$. The matrices $M_z$  and $M_\varphi$ are as in \eqref{eq:Mz} and \eqref{eq:Mphi}.
The action of $g \in GL_2(F)$ on $\wedge^2 V_3$ with respect to the basis $\mE_5$ is given by the matrix $\tilde g_5$ of \eqref{eq:wedge2g4}.
We recall that the lines of $\bP(V_3)$ are parametrized by the points of the Klein quadric $\mQ$ in $\bP(\wedge^2 V_3)$. A  line $L$ of $\bP(V_3)$ generated by independent  cubic forms $u(X,Y), v(X,Y) \in V_3$:
Given $u, v \in V_3$ 
\begin{align} \label{eq:uv}
 u(X,Y)&=u_0Y^3-3u_1Y^2X+3u_2YX^2-u_3X^3, \\ 
\nonumber v(X,Y)&=v_0Y^3-3v_1Y^2X+3v_2YX^2-v_3X^3, \end{align}
has Pl\"ucker coordinates 
\[p_{ij} = u_iv_j - u_j v_i, \quad   0 \leq i < j \leq 3. \]
We recall that a point of $\bP(\wedge^2 V_3)$ with Pl\"ucker coordinates $(p_{01}, \dots, p_{23})$ represents a line of $\bP(V_3)$ if and only if it lies on the Klein quadric:
\[\mQ:  p_{01}p_{23} - p_{02}p_{13} +p_{03}p_{12}=0. \]
In terms of the coordinates $(z_0, \dots, z_5)$ with respect to the basis $\mE_5$ of $\wedge^2 V_3$ ( see  \eqref{eq:newcoords}), the Klein quadric $\mQ$ is given by
\beq \label{eq:Q} \mQ: 
(z_0z_4-4 z_1z_3 +3 z_2^2)/3 = z_5^2. \eeq
In terms of the $PGL_2(F)$-equivariant  isomorphism $\Phi$ of \eqref{eq:Phi_isom} (at the projective level), we see that 
\[ \Phi(\mQ) = \{ \varphi + z_5 (X\wedge Y)^{\otimes 2}  \colon z_5^2 = I(\varphi) \}. \]
\begin{lem}
For a line $L$ represented by a pair $(\varphi, z_5)$, we claim
\begin{enumerate}
    \item[$(1)$] $L$ is non-generic if and only the discriminant $\Delta(\varphi)=0$.
    \item[$(2)$] $L$ lies on an osculating plane of $C$ if and only if $J(\varphi)=z_5^3$.
    \item[$(3)$] $L$ intersects $C$  if and only if $J(\varphi)=-z_5^3$.
\end{enumerate}
    \end{lem}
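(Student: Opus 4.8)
The plan is to prove (1) first, by realising $\varphi_L$ as a Jacobian covariant, and then to prove the ``forward'' halves of (2) and (3) by a direct computation on one line of each kind, deducing the ``converse'' halves from (1). Fix a basis $u,v$ of the pencil $L$ and set $\varphi=\varphi_L$. The dictionary we use rests on the apolar (evaluation) identity $\Omega_3\bigl(f,(Xt-Ys)^3\bigr)=-f(s,t)$ for $f\in V_3$: it identifies the osculating plane $\mO_{(s,t)}=\bigl((Xt-Ys)^3\bigr)^{\perp}$ with the cubics that vanish at $[s:t]$, so ``$L$ lies on an osculating plane of $C$'' is the same as ``$u$ and $v$ have a common zero'', i.e. ``$L$ has a base point'', while ``$L$ meets $C$'' is the same as ``the pencil $L$ contains a cube $(Xt-Ys)^3$''. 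One further ingredient: the quartic part of $\Phi(u\wedge v)$, which is $\varphi_L$, is a fixed nonzero scalar multiple of the Jacobian $\mathrm{Jac}(u,v):=u_Xv_Y-u_Yv_X\in V_4$ (this proportionality is forced by equivariance and fixed by one example), and $\Delta(\varphi)=I^3(\varphi)-J^2(\varphi)$ is a nonzero multiple of the usual discriminant of the binary quartic $\varphi$; hence $\Delta(\varphi_L)=0$ iff $\mathrm{Jac}(u,v)$ has a repeated linear factor.

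Thus (1) reduces to the claim: for $\ell=Xt_0-Ys_0$ one has $\ell^2\mid\mathrm{Jac}(u,v)$ iff $L$ meets $C$ at $\ell^3$ or $\ell$ is a base point of $L$. The implication $(\Leftarrow)$ follows from the two identities $\mathrm{Jac}(\ell^3,v')=3\ell^2\,(t_0v'_Y+s_0v'_X)$ and $\mathrm{Jac}(\ell q_1,\ell q_2)=\tfrac32\,\ell^2\,\mathrm{Jac}(q_1,q_2)$. For $(\Rightarrow)$: from $\ell\mid\mathrm{Jac}(u,v)$ the gradients of some pencil member are proportional at $[s_0:t_0]$, so by Euler's identity that member has the form $w=\ell^2\ell_1$; expanding $\mathrm{Jac}(w,u')$ for a complementary basis vector $u'$ and using the evaluation identity again, $\ell^2\mid\mathrm{Jac}(w,u')$ forces either $\ell_1\propto\ell$ (so $w\propto\ell^3$ is a cube in $L$, i.e. $L$ meets $C$) or $\ell\mid u'$ (so $\ell$ divides every member of $L$, i.e. $\ell$ is a base point). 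This proves $\Delta(\varphi_L)=0\iff L\text{ non-generic}$, which is (1).

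For (2) and (3): the conditions ``$J(\varphi_L)=\pm z_5^3$'' are $G$-invariant (each side transforms by $\det(g)^{-9}$) and $G$ is transitive on $C$ and on the osculating planes, so it suffices to verify the forward implications for one line of each type. For $L=\langle X^3,v\rangle$ (meeting $C$ at $X^3$) a direct computation with \eqref{eq:newcoords} gives $z_0=z_1=0$, $z_2=z_5$ and $\varphi_L=-X^2(v_0Y^2-2v_1YX+v_2X^2)$, so $J(\varphi_L)=-z_2^3=-z_5^3$; for $L\subseteq\mO_{X^3}$ one gets $z_0=z_1=0$ and $z_2=-z_5$, so $J(\varphi_L)=-z_2^3=z_5^3$. (One may also just compute $\det M_z=2z_5^3-6I(\varphi)z_5+4J(\varphi)$, which equals $4\bigl(J(\varphi)-z_5^3\bigr)$ on $\mQ$.) For the converses, suppose $J(\varphi_L)=\varepsilon z_5^3$ with $\varepsilon=\pm1$; then $J^2=z_5^6=I^3$, so $\Delta(\varphi_L)=0$ and $L$ is non-generic by (1). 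If $z_5\ne0$, the alternatives ``$L$ meets $C$'' and ``$L$ has a base point'' force $J=-z_5^3$ and $J=+z_5^3$ respectively by the forward computations, so precisely the one matching $\varepsilon$ holds. The case $z_5=0$ is the main obstacle: then $I(\varphi_L)=J(\varphi_L)=0$, so $\varphi_L$ has a root of multiplicity $\ge 3$, and one must show such an $L$ both meets $C$ and lies on an osculating plane (so that (2) and (3) both hold with right-hand side $0$). This follows by sharpening the dichotomy of (1): if $\ell^3\mid\mathrm{Jac}(u,v)$, then in the base-point case $\langle q_1,q_2\rangle$ contains $\ell^2$, whence $\ell^3\in L$, while in the cube case $\ell\mid u'$ forces $L\subseteq\mO_{(s_0,t_0)}$. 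Assembling the pieces yields all three statements.
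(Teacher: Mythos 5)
Your proposal is correct, and for the forward halves of (2)--(3) it coincides with the paper's argument: the same Pl\"ucker-coordinate computation at the point $X^3$ (giving $z_0=z_1=0$, $z_5=z_2$ resp.\ $z_5=-z_2$, and $J(\varphi_L)=-z_2^3$), followed by transport using the fact that $J(\varphi_L)$ and $z_5(L)^3$ both scale by $\det(g)^{-9}$. Where you genuinely differ is the logical order: the paper proves (2) and (3) first and obtains (1) as a consequence of the characterization of non-generic lines, while you prove (1) first and independently, by identifying $\varphi_L$ (the quartic component of $\Phi(u\wedge v)$) with the Jacobian $u_Xv_Y-u_Yv_X$ and establishing the dichotomy ``$\ell^2\mid\mathrm{Jac}(u,v)$ iff $\ell^3\in L$ or $\ell$ is a base point,'' and you then recover the converse halves of (2)--(3) from (1), with a sharpened dichotomy disposing of the case $z_5=0$, in which $L$ simultaneously meets $C$ and lies in an osculating plane. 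This buys something: the paper's equivariance step is tailored to the forward implications at each fixed point of $C$, the converses resting on the equivalence of non-genericity with $\Delta=0$; your route supplies exactly that equivalence first and makes the existential converses, including the degenerate $z_5=0$ case, fully explicit. Moreover the Jacobian identification carries multiplicity information about $\varphi_L$ that Theorem \ref{lem_pi} (which only locates its zeros) does not, and your divisibility computations ($\mathrm{Jac}(\ell^3,v')=3\ell^2(t_0v'_Y+s_0v'_X)$, $\mathrm{Jac}(\ell q_1,\ell q_2)=\tfrac32\ell^2\,\mathrm{Jac}(q_1,q_2)$, Euler's identity in characteristic $\neq 2,3$) are all correct.

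Three steps should be spelled out in a complete write-up, though none is a substantive gap. First, the invariance-and-transitivity transport must be carried out over extension fields $K\supset\bF_q$, exactly as the paper does with $g\in GL_2(K)$: ``intersects $C$'' and ``lies on an osculating plane'' must include non-rational points, and imaginary chords and axes are precisely the lines for which this matters. Second, the proportionality $\varphi_L\propto\mathrm{Jac}(u,v)$, which you justify by equivariance plus one example, should be verified, e.g.\ by evaluating $\mathrm{Jac}$ on the basis vectors $b_i\wedge b_j$; this simultaneously checks that the $E_5$-component (the $z_5$ direction) is annihilated, which the Schur-type argument tacitly requires. Third, the claim that $I(\varphi_L)=J(\varphi_L)=0$ forces a root of multiplicity at least $3$ follows from the normal form you already use: moving a repeated root to $(0,1)$ gives $\varphi_L=X^2(6z_2Y^2-4z_3YX+z_4X^2)$ with $I=z_2^2$ and $J=-z_2^3$. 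Your parenthetical identity $\det M_z=4J(\varphi)-6I(\varphi)z_5+2z_5^3$, equal to $4(J(\varphi)-z_5^3)$ on $\mQ$, is correct and consistent with Corollary \ref{Psi_deg}.
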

\bep Let $L$ is a line represented by $(\varphi_L, z_5)$. We note that $L$ is contained in the osculating plane $O_{(0,1)}$ to $C$ at the point $X^3$, if and only if its Pl\"ucker coordinates satisfy $p_{01}=p_{02}=p_{03}=0$. Dually, $L$ intersects $C$ at the point $X^3$ if and only if its Pl\"ucker coordinates satisfy $p_{01}=p_{02}=p_{12}=0$. In terms of the representation $(\varphi_L, z_5)$ of $L$, we see that (i) $L$ is contained in the osculating plane $O_{(0,1)}$ if and only if $(\varphi_L, z_5) = (X^2(6z_2 Y^2 - 4 z_3 YX +z_4 X^2),-z_2)$, and (ii)  $L$ intersects $C$ at the point $X^3$ if and only if $(\varphi_L, z_5) = (X^2(6z_2 Y^2 - 4 z_3 YX +z_4 X^2), z_2)$. We note that the quartic form $\varphi_L=X^2(6z_2 Y^2 - 4 z_3 YX +z_4 X^2)$ has $J(\varphi_L) = -z_2^3$ and $I(\varphi_L)=z_2^2$. So the conditions $z_5 =-z_2, z_5=z_2$ are equivalent to $J(\varphi_L)=z_5^3, J(\varphi_L)=-z_5^3$, respectively.\\

We recall that for $g \in GL_2(K)$ where $K \supset F$ is an extension field of $F$, we have 
\[z_5(g \cdot L)=\det(g)^{-3} z_5(L), \quad \varphi_{g \cdot L} = \det(g)^{-1} g \cdot \varphi_L.\]
We also note that 
\[ J(\varphi_{g \cdot L})=J(\det(g)^{-1} g \cdot \varphi_L)=\det(g)^{-3} J(g \cdot \varphi_L)=\det(g)^{-9} J(\varphi_L).\]
Therefore, if $(Xt-Ys)^3$ is a point of $C$ over an extension field $K \supset F$, and if   $g \in GL_2(K)$ satisfies $g \cdot (Xt-Ys)=X$ (equivalently, $g\cdot (s,t)=(0,1)$), then we see that (i) $L$ is contained in the osculating plane $O_{(s,t)}$ if and only if 
\[ J(\varphi_{g\cdot L})= z_5(g \cdot L)^3,\; \text{ equivalently }  \det(g)^{-9} J(\varphi_L) = \det(g)^{-9} z_5(L)^3, \]
and (ii)  $L$ intersects $C$ at the point $(Xt-Ys)^3$ if and only if 
 \[ J(\varphi_{g \cdot L})= -z_5(g \cdot L)^3,\; \text{ equivalently }  \det(g)^{-9} J(\varphi_L) = -\det(g)^{-9} z_5(L)^3, \]
Cancelling the multiplicative factor $\det(g)^{-9}$ from both sides establishes the assertions (2) and (3). As for the assertion (1), we note that $L$ is non-generic if and only if $L$ either meets $C$ or is contained in some osculating plane of $C$. Thus $L$ is non-generic if and only if $J(\varphi)= \pm z_5^3$, which is equivalent to $\Delta(\varphi)=J^2(\varphi) - z_5^6=0$.
\eep
Under the isomorphism $\Psi$ in (\ref{eq:Psi_isom}), let $\Psi_L$ denote the bilinear form associated with a line $L$.
\begin{cor} \label{Psi_deg}
     A line $L$ of $\bP(V_3)$ lies on an osculating plane of $C$ if and only if the bilinear form $\Psi_L$ is degenerate.
\end{cor}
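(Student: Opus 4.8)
The plan is to deduce the corollary from the second part of the preceding Lemma, which says that $L$ lies on an osculating plane of $C$ if and only if $J(\varphi)=z_5^{3}$, where $(\varphi,z_5)$ is a representative of $L$ on the Klein quadric $\mQ$. By \eqref{eq:Mz} the bilinear form $\Psi_L$ has matrix $M_z=z_5A_2+M_\varphi$ with respect to the basis $\mB_2$, so $\Psi_L$ is degenerate exactly when $\det M_z=0$. Thus it suffices to show that, whenever $I(\varphi)=z_5^{2}$ (i.e.\ the point lies on $\mQ$, see \eqref{eq:Q}), one has $\det M_z=0$ if and only if $J(\varphi)=z_5^{3}$.

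The heart of the proof is the polynomial identity
\[ \det M_z \;=\; 2\bigl(z_5^{3}-3\,I(\varphi)\,z_5+2\,J(\varphi)\bigr). \]
I would establish it either by expanding the $3\times 3$ determinant $\det M_z$ and matching terms against the formulas \eqref{eq:I} and \eqref{eq:J} (with $J(\varphi)=\tfrac14\det M_\varphi$; note $M_z|_{z_5=0}=M_\varphi$, so the constant term is already forced to be $4J(\varphi)$), or, more structurally, by observing that $\det M_z$ is a cubic form in $(z_0,\dots,z_5)$ which, by \eqref{eq:Mz_equivarince} together with $\det g_2=\det(g)^{-3}$, is transformed by $\tilde g_5$ through the scalar $\det(g)^{-9}$; the three cubics $z_5^{3}$, $I(\varphi)z_5$, $J(\varphi)$ transform the same way and span the relevant space of relative invariants, so $\det M_z$ is a fixed linear combination of them, whose coefficients $2,-6,4$ are pinned down by inspecting the coefficient of $z_5^{3}$ (namely $\det A_2=2$) and of $z_0z_2z_4$.

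Once the identity is in place, restricting to $\mQ$ gives $I(\varphi)=z_5^{2}$, hence $\det M_z=2\bigl(z_5^{3}-3z_5^{3}+2J(\varphi)\bigr)=4\bigl(J(\varphi)-z_5^{3}\bigr)$. Therefore $\Psi_L$ is degenerate if and only if $J(\varphi)=z_5^{3}$, which by the second part of the Lemma is precisely the condition that $L$ lie on an osculating plane of $C$, completing the argument. The only step that is not purely formal is the determinant identity, and that is a routine computation; the one place to be careful is the bookkeeping of the weights $-4,6,-4$ appearing in $M_\varphi$ and of the antidiagonal placement of the entries of $A_2$, so that the final normalization is exactly as displayed.
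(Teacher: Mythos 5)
Your proposal is correct and follows essentially the same route as the paper: compute $\det M_z$ from \eqref{eq:Mz}, identify it with $4\bigl(J(\varphi)-z_5^3\bigr)$ for a line $L$, and conclude via part (2) of the preceding Lemma. In fact your version is slightly more careful, since the polynomial identity is $\det M_z = 2\bigl(z_5^{3}-3I(\varphi)z_5+2J(\varphi)\bigr)$ and the paper's displayed formula $\det M_z=4\bigl(J(\varphi)-z_5^{3}\bigr)$ holds only after imposing the Klein quadric relation $I(\varphi)=z_5^{2}$, a step you make explicit.
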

\bep The bilinear form $\Psi_L$ is degenerate if and only if the matrix $\det(M_z)=0$ where $M_z$ is the Gram matrix  of $\Psi_L$  with respect to the basis $\mB_2$ of $V_2$.
Expanding  $\det(M_z)$ in powers of $z_5$ using \eqref{eq:Mz}, it is easy to calculate
\[\det(M_z) = 4(J(\varphi)- z_5^3).\]
By the above Lemma, we conclude that 
$\Psi_L$ is degenerate if and only if $L$  lies on an osculating plane of $C$. 
\eep
We end this section by noting that the $PGL_2(F)$-orbits on $\mQ$ are determined in terms of the $PGL_2(F)$-orbits of quartic forms whose apolar invariant is a square in $F$. This is because the pair $(\varphi_{g \cdot L}, z_5(g \cdot L))$ with $z_5(g \cdot L)^2=I(\varphi_{g \cdot L})$ is (at the projective level) equal to $(g \cdot \varphi_L, \det(g)^{-2} z_5)$ where $z_5^2=I(\varphi_L)$. 
Each orbit of quartic forms, lifts to either a single orbit or two distinct orbits on $\mQ$. In the case $F = \bF_q$, the orbits of $PGL_2(q)$ on $\mQ$ will be described in \S \ref{S3}.

\section{$G$-orbits on $\bP(V_4)$ and $\mQ$} \label{S3}
\subsection{$G$-orbits of binary quartic forms} \hfill \\ Let $\mF_\Delta$ denote the set of binary quartic forms with non-zero discriminant. As mentioned in $\S \ref{S2}$, there can be more than one $G$-orbits of quartic forms over $\bF_q$ which have the same value of the $\jmath$-invariant. For example, the quartic forms $XY(X^2-Y^2)$ and $XY(X^2-\ep Y^2)$, where $\ep$ is a non-square in $\bF_q$, both have $1728$ as their $\jmath$-invariant, but they are in different orbits, because the first form splits over $\bF_q$ whereas the second form does not split completely over $\bF_q$. A finer invariant is needed to classify  the $G$-orbits on $\mF_\Delta$. This finer invariant is an equivalence class of `restricted cross-ratios' which we briefly recall (for details see \cite[\S 5]{KPP}). The set of quartic forms with non-zero discriminant,
naturally decomposes into parts  $\mF_4 \cup \mF_2 \cup \mF_1 \cup \mF_4' \cup \mF_2'$, where the set $\mF_i$ for $i \in \{1, 2, 4\}$ consists of those forms which have exactly $i$ linear factors over $\bF_q$. The set $\mF_2'$ consists of irreducible quartic forms, and the set $\mF_4'$ consists of forms which are a product of two distinct irreducible quadratic forms. The restricted cross ratio of a quartic form in 
\begin{enumerate}
    \item[(i)]  $\mF_4, \mF_4'$ is an element of \[\tilde \mN_4=\bF_q\setminus\{0,1\}.\]
    \item[(ii)]  $\mF_2, \mF_2'$ is an element of 
    \[ \tilde \mN_2=\{\lambda \in \bF_q^2\setminus\{1\} : \lambda^{q+1}=1\}. \]
    \item [(iii)]  $\mF_1$ is an element of 
    \[ \tilde \mN_1= \{\lambda \in \bF_q^3: \lambda^{q+1}  - \lambda^q +1=0\}. \]
\end{enumerate}
Let $H_4 \subset G$ denote the anharmonic group consisting of the transformations 
\[H_4= \{t \mapsto t, t^{-1}, 1-t, 1-t^{-1}, 1/(1-t), 1/(1-t^{-1})\}.\]
It is isomorphic to the symmetric group $S_3$, and is generated by the involution $t \mapsto t^{-1}$ and the order $3$ element $t \mapsto 1/(1-t)$. Let $H_2$ be the subgroup of $H_4$ generated by the involution $t \mapsto t^{-1}$. Also, let $H_1$ denote the trivial subgroup of $H_4$. The $G$-orbits in each of these five parts are classified by equivalence classes of restricted cross ratios: 
\begin{enumerate}
     \item $G$-orbits in $\mF_4 \;  \leftrightarrow \; \tilde \mN_4/H_4$.
     \item $G$-orbits in $\mF_4' \;  \leftrightarrow \; \tilde \mN_4/H_2$.
     \item $G$-orbits in $\mF_2 \;  \leftrightarrow \; \tilde \mN_2/H_2$.
     \item $G$-orbits in $\mF_2' \;  \leftrightarrow \; \tilde \mN_2/H_2$.
     \item $G$-orbits in $\mF_1 \;  \leftrightarrow \; 
     \tilde \mN_1/H_1$.
\end{enumerate}
The function $\jmath:\overline{\bF}_q\setminus\{0,1\} \to \overline{\bF}_q$ defined by 
\[  1-\tfrac{1728}{\jmath(\lambda)} = \frac{ (\lambda^2 - \lambda+1)^3}
{\left((\lambda+1)(\lambda-2)(\lambda-\tfrac{1}{2}) \right)^2},\]
classifies the $H_4$-orbits on $\overline{\bF}_q\setminus\{0,1\}$: $\jmath(\lambda) = \jmath(\lambda')$ if and only if $\lambda' \in H_4 \cdot \lambda$. Each $H_4$-orbit on  $\overline{\bF}_q\setminus\{0,1\}$ 
has size $6$  with two exceptions:
\[   \jmath^{-1}(0) = H_4 \cdot (-\omega)=\{-\omega, -\omega^2\}, \quad  \jmath^{-1}(1728) = H_4 \cdot (-1) =  \{-1,2,1/2\},\]
where $\omega$ is a primitive cube root of unity.
If $\lambda$ denotes a restricted cross-ratio corresponding to   an orbit $G \cdot \varphi(X,Y)$ in $\mF_\Delta$,  we define the quantity $\jmath(\mO)$ to be $\jmath(\lambda)$. This quantity also equals $\jmath(\varphi)$ as defined in \eqref{eq:jvarphi}, and it also equals the $\jmath$-invariant of the set of $4$ points $ \{(s_i,t_i) : 1 \leq i \leq 4\}$ of the projective line over $\overline{\bF}_q$.\\

For $i\in \{1, 2, 3\}$, let $\mN_i$ be the subset of $\tilde \mN_i$ defined by 
\[\mN_i = \tilde\mN_i \setminus\{-1,1/2,2,-\omega, -\omega^2\}.\]
The set $\bF_q\setminus\{0,1\}$ can be partitioned as 
\[ \bF_q\setminus\{0,1\} = J_4 \cup J_2 \cup J_1, \qquad J_i = \jmath(\mN_i).\]
The sets $J_i$ have sizes:
\[  |J_4|=\tfrac{(q-6-\mu)}{6}, \quad |J_2|=\tfrac{q-2+\mu}{2}, \quad  |J_1|=\tfrac{q-\mu}{3}.  \]
For $i \in \{1, 2, 3\}$, for each $r \in J_i$, there is one $G$-orbit in $\mF_i$.  For each $r \in J_4$ there  are $3$ orbits in $\mF_4'$, and  for each $r \in J_2$, there is one orbit in  $\mF_2'$. 
Thus, there are a total of $4|J_4| + 2 |J_2| +|J_1|=2q-6$ orbits $\mO$ in $\mF_\Delta$ with $\jmath(\mO) \in \bF_q \setminus\{0, 1728\}$. 
There are $5$ orbits in $\mF_\Delta$ with $\jmath(\mO)=1728$, of which there are two in $\mF_4'$ and one each in $\mF_4, \mF_2$ and  $\mF_2'$. There are  $3+\mu$ orbits in $\mF_\Delta$ with $\jmath(\mO)=0$, of which there are $(1+\mu)/2$ each in $\mF_4$ and $\mF_4'$, $(1-\mu)/2$ each in $\mF_2$ and $\mF_2'$, and $(1+\mu)$  in $\mF_1$. The sizes and  representative quartic forms for all the $(2q+2+\mu)$ orbits in $\mF_\Delta$ can be found in \cite[Table 3]{KPP}.

\subsection{$G$-orbits on $\mL$} \hfill \\
We recall from \S \ref{S2}, that 
under the $G$-equivariant isomorphism 
 $\Phi: \bP(\wedge^2 V_3) \to  \bP\left( V_4 \,  \oplus \,  (\wedge^2 V)^{\otimes 2}\right)$ 
of \eqref{eq:Phi_isom}, the points of the Klein quadric $\mQ$ have image 
\[ \Phi(\mQ) = \{ (\varphi,\pm \sqrt{I(\varphi)}) \colon \varphi \in V_4, I(\varphi) \text{ is a square in $\bF_q$}\}. \]
More precisely, the following result was proved in \cite{KPP}:
\begin{thm} \cite[\S 3]{KPP}\label{lem_pi}
Let $\mF^+$ denote the subset of $PG(V_4)$ consisting of forms $\varphi$ with $I(\varphi)$ a square in $\bF_q$. There is a $PGL_2(q)$-equivariant $2$-sheeted covering map $\pi:\mQ \to \mF^+$, where for $\varphi \in \mF^+$ given by  
\[\varphi=z_0 Y^4-4 z_1 Y^3X+6 z_2 Y^2X^2 -4 z_3YX^3 + z_4X^4, \] 
the inverse image $\pi^{-1}(\varphi)$ consists of the  lines $\{L, L^{\perp}\}$ whose  coordinates $(z_0, \dots, z_5)$ satisfy  $z_5= \pm \sqrt{I(\varphi)}$. 

The following conditions are equivalent for a  pencil $L$ of $PG(V_3)$ with $\varphi = \pi(L)$
\begin{enumerate}
\item[i)] the pencil $L$  contains a form divisible by $(Xt-Ys)^2$,
\item[ii)] $\varphi(s,t)=0$.
\end{enumerate}
\end{thm}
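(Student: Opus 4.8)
The plan is to prove the two assertions separately. For the covering map $\pi$, I would work throughout in the coordinates $(z_0,\dots,z_5)$ of the basis $\mE_5$: by the isomorphism $\Phi$ of \eqref{eq:Phi_isom} a point of $\mQ$ is a pair $(\varphi,z_5)$ with $\varphi=z_0Y^4-4z_1Y^3X+6z_2Y^2X^2-4z_3YX^3+z_4X^4$, subject to the equation \eqref{eq:Q} of the Klein quadric, i.e.\ $z_5^2=I(\varphi)$. I would define $\pi(\varphi,z_5)=\varphi$. This is well defined on $\mQ$, since $\varphi=0$ would force $z_5^2=I(0)=0$ and hence the excluded zero vector; it is $G$-equivariant because, by \eqref{eq:wedge2g4}, $g$ acts on $\mQ$ in these coordinates by $\det(g)^{-1}\,\mathrm{diag}(g_4,\det(g)^{-2})$, whose top-left block is (projectively) the action $g_4$ of $g$ on $\bP(V_4)$. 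The image of $\pi$ is exactly the set of $\varphi$ for which $z_5^2=I(\varphi)$ is solvable in $\bF_q$, which is $\mF^+$ by definition, and the fibre over $\varphi$ consists of the two points $z_5=\pm\sqrt{I(\varphi)}$ (coalescing to one point exactly on the branch locus $I(\varphi)=0$).

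It then remains to see that these two points are $L$ and $L^{\perp}$. Polar duality $L\mapsto L^{\perp}$ with respect to $\Omega_3$ is the restriction to $\mQ$ of a linear involution $\star$ of $\bP(\wedge^2V_3)$, and $\star$ commutes (up to scalar) with the $G$-action because $G$ preserves $\Omega_3$ up to the scalar $\det(g)^3$ by \eqref{eq:Aj_invariance}. By \eqref{eq:Phi_isom} the $GL_2(q)$-module $\wedge^2V_3$ decomposes as $(V_4\otimes\wedge^2V)\oplus(\wedge^2V)^{\otimes3}$, a sum of two non-isomorphic irreducibles of dimensions $5$ and $1$, with the first summand spanned by $E_0,\dots,E_4$ and the second by $E_5$; by Schur's lemma $\star$ acts as a scalar on each summand, and since $\star^2$ is scalar these two scalars agree up to sign — and they cannot be equal, since $\star\neq\mathrm{id}$ on lines. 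Hence in the basis $\mE_5$, $\star$ is $\mathrm{diag}(1,1,1,1,1,-1)$ up to scalar: it fixes $\varphi$ and negates $z_5$, so $\pi(L^{\perp})=\pi(L)$ and $\pi^{-1}(\varphi)=\{L,L^{\perp}\}$. (Alternatively, one computes $\star$ directly from the Gram matrix $A_3$.)

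For the equivalence (i)$\Leftrightarrow$(ii), I would again use $G$-equivariance. Both conditions are invariant under the simultaneous action of $PGL_2$ on the line $L$ and on the point $(s:t)$ of the projective line — (i) because $g$ acts on $V_3$ by the linear substitution carrying $Xt-Ys$ to a scalar multiple of $Xt'-Ys'$ with $(s',t')=g\cdot(s,t)$, and (ii) because $\varphi_{g\cdot L}=\det(g)^{-1}\,g\cdot\varphi_L$, so that $\varphi_{g\cdot L}$ vanishes at $g\cdot(s,t)$ iff $\varphi_L$ vanishes at $(s,t)$. Since $G=PGL_2(q)$ is transitive on $PG(1,q)$, I may assume $(s,t)=(0,1)$, i.e.\ the point $X^3$ of $C$; then $(Xt-Ys)^2=X^2$, and a cubic form $a_0Y^3-3a_1Y^2X+3a_2YX^2-a_3X^3$ is divisible by $X^2$ precisely when $a_0=a_1=0$. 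Writing $L=\langle u,v\rangle$ with $u,v$ as in \eqref{eq:uv}, the pencil $L$ contains a nonzero such form iff the homogeneous system $\alpha u_0+\beta v_0=0$, $\alpha u_1+\beta v_1=0$ has a nonzero solution $(\alpha,\beta)$, i.e.\ iff $p_{01}=u_0v_1-u_1v_0=0$. By \eqref{eq:newcoords}, $p_{01}=z_0$, and $z_0=\varphi_L(0,1)$ (the coefficient of $Y^4$); thus (i)$\iff z_0=0\iff\varphi_L(0,1)=0\iff$(ii), which is the assertion at $(s,t)=(0,1)$, and hence in general.

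The step I expect to be the crux is the identification of the two sheets of $\pi^{-1}(\varphi)$ with $L$ and $L^{\perp}$, i.e.\ showing that polar duality acts on the coordinates $(z_0,\dots,z_5)$ by $z_5\mapsto-z_5$ while fixing $\varphi$. The representation-theoretic argument above is short but relies on $V_4$ being an irreducible $\bF_q[GL_2(q)]$-module, which holds here because the characteristic of $\bF_q$ exceeds $3$; without it one must compute the symplectic ``Hodge star'' on $\wedge^2V_3$ by hand, which is routine but tedious. Everything else — the equivariance, image and fibres of $\pi$, and the whole of part (ii) — is immediate once one reduces via the transitive $PGL_2$-action and reads off \eqref{eq:Q}, \eqref{eq:wedge2g4} and \eqref{eq:newcoords}.
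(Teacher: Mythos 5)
Your proposal is essentially correct, but note that the paper does not prove Theorem \ref{lem_pi} at all: it is imported verbatim from \cite[\S 3]{KPP}, so there is no in-paper argument to compare against. Judged on its own, your route is sound and self-contained: reading points of $\mQ$ as pairs $(\varphi,z_5)$ with $z_5^2=I(\varphi)$ via $\Phi$ and \eqref{eq:Q}, getting equivariance from the block form \eqref{eq:wedge2g4}, and reducing part (ii) by transitivity of $PGL_2(q)$ on $PG(1,q)$ to the point $(s,t)=(0,1)$, where ``divisible by $X^2$'' becomes $p_{01}=z_0=\varphi_L(0,1)=0$ by \eqref{eq:newcoords} — all of this checks out, including the equivariance identities $\varphi_{g\cdot L}=\det(g)^{-1}g\cdot\varphi_L$ and $(g\cdot\varphi)(g\cdot(s,t))=\det(g)^{-4+4}\varphi(s,t)$.

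The one step you should tighten is the Schur argument identifying polar duality with $\mathrm{diag}(1,1,1,1,1,-1)$. The duality involution $\star$ intertwines the $GL_2(q)$-action with itself only up to a character $\det(g)^k$, so Schur's lemma as stated gives an isomorphism between the $5$-dimensional summand and its character twist, not immediately a scalar. The gap closes easily: restrict to $SL_2(q)$, where $V_4=\mathrm{Sym}^4$ is irreducible (here $\mathrm{char}\,\bF_q\geq 5$ and $q>4$), so $\star$ is scalar on each summand for $SL_2(q)$, and then the $GL_2(q)$ relation forces the twisting character to be trivial; after that your ``$a^2=b^2$, $a\neq b$ since $\star\neq\mathrm{id}$ on lines'' argument pins down $\star$ projectively, and one can sanity-check against the representatives in Lemma \ref{nongeneric}, where dual orbit pairs differ exactly by the sign of $z_5$. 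Alternatively, the direct computation of $\star$ from the Gram matrix $A_3$, which you mention, avoids the issue entirely. With that caveat addressed, the proof is complete, including the degenerate behaviour of the covering over $I(\varphi)=0$, where $L=L^\perp$.
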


A line $L$ is generic if and only if $\Delta(\varphi_L) \neq 0$.
For each $G$-orbit $\mO$ in $\mF^+_\Delta$, the set $\pi^{-1}(\mO)$ is either a single orbit $\fO  = \fO^\perp$ or a pair of orbits $\fO \cup \fO^\perp$ in $\mL$.  
For an orbit $\mO$ in $\mF_\Delta$, if $\jmath(\mO) \notin \{0, 1728\}$ then $\mO$ lifts to a pair of distinct orbits $\fO$ and $\fO^\perp$. 
If $\jmath(\mO) \in \{0, 1728\}$ then $\mO$ lifts to single self-dual  orbit $\fO=\fO^\perp$, with one exception: if $q \equiv \pm 1 \mod 12$, then of the two orbits in $\mF_4' \cap \mF^+_\Delta$ with $\jmath(\mO)=1728$, represented by $H_2 \cdot (-1)$ and $H_2 \cdot 2$, the orbit represented by $H_2 \cdot 2$ lifts to distinct orbits $\fO$ and $\fO^\perp$. The sizes and representative generators of all the 
$(2q-3+\mu)$ orbits of generic lines can be found in \cite[Table 4]{KPP}.


\begin{prop}\cite[Proposition 6.2]{KPP} \label{J+prop} \hfill  \begin{enumerate}
\item There are $(3+\mu)$ orbits with $\jmath(\fO)=0$. 
 \begin{enumerate}
 	 \item If $\mu=-1$ both the orbits have size $|G|/2$. 
	  \item If $\mu=1$ then of the $4$ orbits, there are two orbits of size $|G|/3$ and one   orbit each of size $|G|/4$ and $|G|/12$.
 \end{enumerate}
\item   The number of orbits with $\jmath(\fO)=1728$ is
    \begin{enumerate}
 	 \item  $4$ if $q\equiv \pm 1 \mod 12$ all of which have size $|G|/4$.
	  \item $2$ if $q\equiv \pm 5 \mod 12$ both of which have size $|G|/2$.
    \end{enumerate}
\item  if $\jmath(G \cdot f) \neq 0, 1728$  and $\jmath(f) \in J_i$ for $i=1,2,4$ then 
$G \cdot f \in \mF^+_\Delta$  if and only if $\jmath(f) \in J_i^+$ where 
 \[ J_i^+ = \{r \in J_i : r/(r-1728) \text{ is a square in $\bF_q$}\}. \]
 The sets $J_i^+$ have sizes 
 \begin{enumerate}
\item $|J_1^+|=|J_1|/2 =(q-\mu)/6$ where $q \equiv \mu \mod 3$  and $\mu \in \{ \pm 1\}$.
\item $|J_4^+|= (q-r)/12$ where  $q \equiv r \mod 12$ and $r \in \{5,7,11,13\}$.
\item $|J_2^+|=\begin{cases} (q-1)/4 &\text{ if $q \equiv 1 \mod 12$,}\\
 (q-3)/4 &\text{ if $q \equiv 7 \mod 12$,}\\
   (q-5)/4 &\text{ if $q \equiv 5 \mod 12$,}\\
 (q-3)/4  &\text{ if $q \equiv 11 \mod 12$.}\end{cases}$
\end{enumerate}
\end{enumerate}
\end{prop}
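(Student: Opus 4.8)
The plan is to reduce everything to the orbit classification recalled in \S\ref{S3}, the invariant identity \eqref{eq:jvarphi}, and a handful of stabilizer computations: first prove the criterion in part~(3), then deduce its cardinality statements, and finally handle parts~(1)--(2) by a separate, direct argument. For the criterion, rewrite \eqref{eq:jvarphi} as
\[
\frac{\jmath(\varphi)}{\jmath(\varphi)-1728}=\frac{I^3(\varphi)}{J^2(\varphi)}=I(\varphi)\Bigl(\frac{I(\varphi)}{J(\varphi)}\Bigr)^{2},
\]
which is legitimate precisely when $\jmath(\varphi)\notin\{0,1728\}$, i.e.\ $I(\varphi)J(\varphi)\neq0$. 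Hence $\jmath(\varphi)/(\jmath(\varphi)-1728)$ --- equivalently $r/(r-1728)$, equivalently $r(r-1728)$, for $r=\jmath(\varphi)$ --- is a square in $\bF_q$ if and only if $I(\varphi)$ is. Since ``$I(\varphi)$ a square'' and the value $\jmath(\varphi)$ are both constant on $G$-orbits and $\mF^+_\Delta=\{\varphi\in\mF_\Delta:I(\varphi)\text{ a square}\}$, this is exactly the assertion $G\cdot f\in\mF^+_\Delta\iff\jmath(f)\in J_i^+$.

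For the cardinalities, let $\chi$ be the quadratic character of $\bF_q^{\times}$; since $r(r-1728)\neq0$ throughout $J_i$, one has $|J_i^+|=\tfrac12\bigl(|J_i|+\sum_{r\in J_i}\chi(r(r-1728))\bigr)$. The needed extra ingredient is an intrinsic description of $J_1,J_2,J_4$: these partition $\bF_q\setminus\{0,1728\}$ according to how the resolvent cubic of a quartic of that $\jmath$-value factors over $\bF_q$ --- irreducible for $J_1$, a linear times an irreducible quadratic for $J_2$, a product of three linear factors for $J_4$. Its discriminant lies in the square class of $3\,\Delta(\varphi)$, and as $\Delta(\varphi)=1728\,I^3(\varphi)/\jmath(\varphi)$, a square-class computation (using $\chi(1728)=\chi(3)$ and $\chi(I)=\chi(r(r-1728))$ from the previous step) shows that character equals $\chi(r-1728)$; hence $r\in J_2\iff\chi(r-1728)=-1$, and $J_1\cup J_4$ is the locus $\chi(r-1728)=1$. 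On $J_1\cup J_4$ we then have $\chi(r(r-1728))=\chi(r)$ and on $J_2$ we have $\chi(r(r-1728))=-\chi(r)$, so all three counts come down to the sums $\sum_{r\in J_i}\chi(r)$. For $J_2$ this is computed from the classical identities $\sum_{r\in\bF_q}\chi(r^2-1728r)=-1$ and $\sum_{r\in\bF_q}\chi(r)=0$ together with the two boundary terms at $r=0,1728$, and gives the claimed value of $|J_2^+|$ in all four classes mod $12$. For $J_1,J_4$ one needs the finer split of $\{\chi(r-1728)=1\}$ into ``resolvent irreducible'' ($J_1$) and ``resolvent split'' ($J_4$), a cubic-residue condition; the cross-ratio parametrization makes this handleable, since $\jmath(\lambda)=256\,(\lambda^{2}-\lambda+1)^{3}/(\lambda(\lambda-1))^{2}$ and so ``$r(r-1728)$ a square'' pulls back to ``$\lambda^{2}-\lambda+1$ a square in $\bF_q$''. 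One shows the quadratic character is equidistributed over $J_1$, i.e.\ $\sum_{r\in J_1}\chi(r)=0$ --- which gives $|J_1^+|=|J_1|/2$ exactly --- and then $\sum_{r\in J_4}\chi(r)$ is forced, as $\sum_{J_1}\chi(r)+\sum_{J_4}\chi(r)$ is again a classical sum over $\{\chi(r-1728)=1\}$; substituting yields the stated $|J_4^+|$.

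For parts (1) and (2), $\jmath(\fO)\in\{0,1728\}$ and the criterion of part~(3) degenerates, so I argue directly. The four-point configuration on $\bP(V_1)$ is equianharmonic when $\jmath=0$ and harmonic when $\jmath=1728$, with automorphism group over $\overline{\bF}_q$ equal to $A_4$ of order $12$, respectively the dihedral group of order $8$. If $\jmath=0$ then $I(\varphi)=0$, so $\varphi$ automatically lies in $\mF^+$, $\pi$ is ramified there, $\pi^{-1}(\mO)$ is a single (self-dual) line orbit $\fO$ with $|\fO|=|\mO|=|G|/|G_\varphi|$, and enumerating the $\bF_q$-forms of $A_4$ gives $\bF_q$-rational automorphism groups of order $2$ when $\mu=-1$ (two orbits of size $|G|/2$) and of orders $12,4,3$ when $\mu=1$ (four orbits of sizes $|G|/12,|G|/4,|G|/3,|G|/3$) --- the orders $12$ and $4$ requiring $\omega\in\bF_q$, i.e.\ $\mu=1$. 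If $\jmath=1728$ then $I(\varphi)\neq0$ and $\pi$ is unramified there; I would run through the at-most-five quartic orbits of this $\jmath$-value --- representatives in \cite[Table 3]{KPP} --- computing $I(\varphi)$ to decide which lie in $\mF^+$ (a condition on $\chi(3)$, hence on whether $q\equiv\pm1$ or $\pm5\bmod12$) and whether each lifts to a single self-dual line orbit (so $|G_L|=|G_\varphi|/2$) or to a dual pair (so $|G_L|=|G_\varphi|$); this produces four line orbits of size $|G|/4$ when $q\equiv\pm1\bmod12$ and two of size $|G|/2$ when $q\equiv\pm5\bmod12$.

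The real work --- and the main obstacle --- is proving $\sum_{r\in J_1}\chi(r)=0$: obtaining it exactly, not merely up to $O(\sqrt{q})$, requires the precise cubic-residue description separating $J_1$ from $J_4$ together with a careful, case-by-case accounting of every point removed in passing from $\tilde\mN_1$ to $\mN_1$, all tracked through $q\bmod12$. By contrast the stabilizer bookkeeping in parts~(1)--(2) is routine, the one delicate point being the lone orbit for which $\pi^{-1}(\varphi)$ is a single $G$-orbit rather than a dual pair.
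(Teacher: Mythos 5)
Your reduction of the membership criterion in part (3) is correct and clean: from \eqref{eq:jvarphi} one gets $\jmath/(\jmath-1728)=I^3/J^2$, so for $\jmath\neq 0,1728$ the square class of $I(\varphi)$ depends only on $r=\jmath(\varphi)$ and equals that of $r/(r-1728)$, which is exactly the stated criterion; your identification of $J_2$ as the locus $\chi(r-1728)=-1$ via the resolvent cubic (whose discriminant lies in the square class of $3\Delta$, and $\chi(3\Delta)=\chi(r-1728)$) is also sound and is consistent with the stated value of $|J_2|$. Note, for the record, that the paper itself gives no proof of this proposition: it is imported verbatim from \cite[Proposition~6.2]{KPP}, so there is no internal argument to compare against; the complete justification lives in the orbit classification and tables of that reference, which is precisely the material your sketch defers to.

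As a proof, however, your proposal has genuine gaps, and you flag the central one yourself. All of the exact cardinalities --- $|J_1^+|=|J_1|/2$, the value of $|J_4^+|$, and the four-case formula for $|J_2^+|$ --- rest on evaluations you do not carry out: the claim $\sum_{r\in J_1}\chi(r)=0$ is declared ``the real work'' and left unproven, and without it neither $|J_1^+|$ nor (by subtraction along $\{\chi(r-1728)=1\}$) $|J_4^+|$ follows; separating $J_1$ from $J_4$ inside that locus is a cubic (Frobenius-on-the-resolvent) condition that quadratic character sums alone do not detect. Your pullback step ``$r(r-1728)$ a square if and only if $\lambda^2-\lambda+1$ is a square in $\bF_q$'' is stated for $\lambda\in\bF_q$, but for $J_1$ and $J_2$ the restricted cross-ratios lie in $\bF_{q^3}$ and $\bF_{q^2}$, so a norm argument is needed before that criterion even makes sense there. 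Finally, parts (1) and (2) are an enumeration plan rather than a proof: you do not actually compute the $\bF_q$-rational stabilizers of the $\jmath=0$ orbits, nor verify which of the five $\jmath=1728$ quartic orbits have $I$ a square, nor settle the one delicate point that makes the count four rather than three when $q\equiv\pm1\bmod 12$ --- namely that exactly one orbit in $\mF_4'$ (the one represented by $H_2\cdot 2$) lifts to a dual pair $\fO\neq\fO^\perp$ while the remaining $\jmath\in\{0,1728\}$ orbits lift to self-dual line orbits. Until those computations are done, the counts and sizes in (1)--(2) and the three formulas in (3) remain asserted, not proved.
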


Let $\jmath(\varphi)$ denote the $\jmath$-invariant of the $4$ roots $\{ (s_i,t_i) : i=1  \dots  4\}$ of $\varphi(X,Y)\in \mF_\Delta$ (see\cite[\S 4]{KPP}) and let $\jmath(\fO)=\jmath(\varphi)$ for any representative $\varphi$ of $\pi(\fO)$. The $(2q-3+\mu)$ orbits of generic lines corresponding to  $\jmath(\fO) =0, 1728$ and $\jmath(\fO) \in \bF_q \setminus \{0, 1728\}$ and their sizes were determined in the following table \cite[Table 2]{KPP}:\\

\noindent \begin{tabular}{c| *{5}{c}}
$\jmath(\fO)$   & $|G|$ & $\frac{|G|}{2}$ & $\frac{|G|}{3}$ & $\frac{|G|}{4}$ & $\frac{|G|}{12}$     \\
&&&&&\\  \hline \\
 $\jmath \neq 0, 1728$   & $2|J_1^+|$ & $4|J_2^+|$ & $0$  & $8|J_4^+|$ &  $0$   \\ 

$\jmath=1728$ & $0$ &  \small $\begin{cases} 0 &\text{ if $q \equiv \pm 1 \mod 12$} \\
2 &\text{ if $q \equiv \pm 5 \mod 12$} \end{cases}$  & $0$ & 
 \small $\begin{cases} 4 & \text{ if $q \equiv \pm 1 \mod 12$} \\
0 &\text{ if $q \equiv \pm 5 \mod 12$} \end{cases}$ & $0$   \\

$\jmath=0$  & $0$ & $(1-\mu)$ & $(1+\mu)$ & $\tfrac{1+\mu}{2}$ &  $\tfrac{1+\mu}{2}$  
  \\ [1ex]
\hline
total & $\tfrac{q-\mu}{3}$ & $q-1$ &  $(1+\mu)$ & $\tfrac{2q-10 -(1+\mu)/2}{3}$ &  $\tfrac{1+\mu}{2}$.
  \\ [1ex]
\end{tabular}

\section{The discriminant quartic form $D_L$ associated to a generic line $L$} \label{S4}

The $G$-orbit classification of points of $\bP(V_3)$ is given in \cite[Corollary 5]{Hirschfeld3}. We record these orbits in the next lemma.
\begin{lem}\label{cubic} 
The projective space of binary cubic forms over $\bF_q$ of size $q^3+q^2+q+1$ can be decomposed into the following five $G$-orbits.
\begin{enumerate}
    \item $G \cdot X^3$ of size $(q+1)$ (corresponding to the points of $C(\bF_q)$).
    \item $G \cdot X^2Y$ of size $q(q+1)$ (corresponding to the points not on $C(\bF_q)$ but on some tangent line of $C(\bF_q)$).
    \item $G \cdot XY(X-Y)$ of size $(q^3-q)/6$ (corresponding to the intersection points of the osculating planes at three distinct points of $C(\bF_q)$).
    \item $G \cdot X(X^2-\epsilon Y^2)$, where $\ep$ is a nonsquare in $\bF_q$, of size $q(q^2-1)/2$ (corresponding to the intersection points of the osculating planes to $C$ at $P,Q,R$, where $P$ is a point of $C(\bF_q)$ and $Q,R$ are two Galois conjugate points of $C(\bF_{q^2})$).
    \item $G \cdot (X-\theta Y)(X-\phi(\theta) Y)(X-\phi^2(\theta) Y)$, where $\theta \in \bF_{q^3}\setminus \bF_{q^2}$, of size $(q^3-q)/3$ (corresponding to the intersection points of the osculating planes to $C$ at $P,Q,R$, where $P,Q,R$ are three Galois conjugate points of $C(\bF_{q^3})$).
\end{enumerate}
\end{lem}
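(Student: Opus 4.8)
The plan is to reduce the classification to a known fact about the action of $G=PGL_2(q)$ on unordered triples of points of the projective line, combined with the geometric interpretation of each orbit via osculating planes of the twisted cubic. The key observation is that $\bP(V_3)$ carries the polarity $\Omega_3$ from \S\ref{S2}, so every point $P$ of $\bP(V_3)$ has a polar dual plane $P^\perp$, and $P^\perp$ meets $C$ in exactly the points where the corresponding cubic form vanishes (more precisely, $P$ lies in the osculating plane $O_{(s,t)}$ iff $(Xt-Ys)\mid f_P$). Thus the factorization type of a cubic form $f(X,Y)$ over $\bF_q$ — which is one of: a cube of a linear form; a square of a linear form times a distinct linear form; three distinct linear forms over $\bF_q$; one linear form times an irreducible quadratic; an irreducible cubic — is visibly a $G$-invariant, since $GL_2(q)$ permutes roots. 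This already produces five invariant classes, and the orbit sizes listed correspond exactly to the five cases (tangent lines to $C$, osculating-plane triple intersections with $0$, $2$, or $3$ rational points among the three osculation points, etc.).

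First I would fix representatives and verify they lie in the claimed classes: $X^3$ is a cube (on $C$), $X^2Y$ has a double and a simple root (on a tangent line but not on $C$), $XY(X-Y)$ splits into three distinct rational roots, $X(X^2-\epsilon Y^2)$ has one rational and a Galois-conjugate pair of roots in $\bF_{q^2}$, and $(X-\theta Y)(X-\phi(\theta)Y)(X-\phi^2(\theta)Y)$ with $\theta\in\bF_{q^3}\setminus\bF_{q^2}$ is an irreducible cubic over $\bF_q$. Next I would show these five types exhaust all cubic forms up to scalar: a nonzero binary cubic has exactly three roots in $\bP^1(\overline{\bF}_q)$ counted with multiplicity, and the partition of root multiplicities together with the Galois action on the roots leaves precisely these five possibilities. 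Then I would argue transitivity within each class: $G=PGL_2(q)$ acts $3$-transitively on $\bP^1(\bF_q)$, which immediately gives transitivity on classes (1), (2), (3); for class (4) one uses that $G$ acts transitively on pairs (rational point, conjugate pair of $\bF_{q^2}$-points), and for class (5) that $G$ acts transitively on Frobenius orbits of size $3$ in $\bP^1(\bF_{q^3})$, both standard facts about $PGL_2$.

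For the orbit sizes I would compute stabilizers (equivalently, use orbit–stabilizer with $|G|=q(q^2-1)$). Class (1): the stabilizer of a point of $\bP^1(\bF_q)$ is a Borel of order $q(q-1)$, giving orbit size $q+1$. Class (2): stabilizing an ordered pair (double root, simple root) has order $q-1$, giving size $q(q+1)$ — but one must check the double and simple roots cannot be swapped, which is clear since a projectivity preserving the form preserves multiplicities. Class (3): stabilizing an unordered triple of distinct rational points is the image of $S_3$ (the anharmonic group $H_4$ from \S\ref{S3}), of order $6$, giving $(q^3-q)/6$. Class (4): stabilizing the configuration (one rational point + one conjugate $\bF_{q^2}$-pair) has order $2$ (only the nontrivial permutation swapping the conjugate pair, realized by a rational projectivity), giving $q(q^2-1)/2$. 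Class (5): the stabilizer of a Frobenius-orbit of size $3$ is cyclic of order $3$ (generated by the projectivity realizing the $3$-cycle), giving $(q^3-q)/3$. A sanity check is that the sizes sum to $q^3+q^2+q+1=|\bP^3(\bF_q)|$, which they do.

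The main obstacle I expect is not the transitivity statements — those are routine $PGL_2$ facts — but rather being careful with the stabilizer computations in classes (4) and (5), specifically confirming that the full subgroup of $S_3$ permuting the three roots is actually realized by $\bF_q$-rational projectivities. For a Galois-conjugate triple, any $\bF_q$-projectivity fixing the set of roots must commute with Frobenius on that set, so it induces a power of the $3$-cycle, and conversely the $3$-cycle itself is realized because the field automorphism $\phi$ of $\bF_{q^3}/\bF_q$ sends the triple to itself and is induced by a $\bF_q$-linear (hence rational projective) map on the coordinates — this gives exactly the cyclic group of order $3$. Similarly in class (4) the transposition of the conjugate pair is induced by $\mathrm{Frob}_q$ acting $\bF_q$-rationally, and no further symmetry is possible since the rational point is pinned down. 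Once these are nailed, orbit–stabilizer finishes the lemma; alternatively, since this is \cite[Corollary~5]{Hirschfeld3}, one may simply cite it and present the representatives and the geometric descriptions as above.
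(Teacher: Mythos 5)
The paper offers no proof of this lemma at all: it simply records the classification and cites \cite[Corollary 5]{Hirschfeld3}, so any argument you give is necessarily ``a different route.'' Your route is the standard one and is essentially sound: factorization type over $\bF_q$ (triple root; double plus simple; three distinct rational roots; rational root plus conjugate quadratic pair; Galois-conjugate cubic triple) is a $G$-invariant, sharp $3$-transitivity of $PGL_2(q)$ on $\bP^1(\bF_q)$ handles classes (1)--(3), transitivity on the two non-rational configurations handles (4)--(5), and the sizes check out (they sum to $q^3+q^2+q+1$). Note also that once transitivity is established you do not actually need the stabilizers in (4) and (5): the orbit sizes are just the counts of configurations, $(q+1)\cdot\tfrac{q^2-q}{2}=\tfrac{q(q^2-1)}{2}$ and $\tfrac{q^3-q}{3}$, which sidesteps the part you flagged as delicate.

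The one genuinely flawed step is exactly at that delicate part: you claim the transposition of the conjugate pair in (4) and the $3$-cycle in (5) are ``induced by Frobenius acting $\bF_q$-rationally,'' i.e.\ that $\mathrm{Frob}_q$ is realized by an $\bF_q$-linear map on the coordinates and hence by a rational projectivity. That is false: Frobenius acts semilinearly on $\bF_{q^2}^2$ resp.\ $\bF_{q^3}^2$ and is not an element of $PGL_2(q)$ (nor even of $PGL_2(q^2)$ or $PGL_2(q^3)$), so it cannot by itself supply the stabilizing element. The conclusion is nevertheless true and easy to repair. For (4), an explicit element works: for the representative $X(X^2-\ep Y^2)$ the projectivity $\mathrm{diag}(1,-1)$ fixes the rational root and swaps $X\pm\sqrt{\ep}\,Y$, so the stabilizer has order exactly $2$ (at most $2$ because a rational element stabilizing the root set commutes with Frobenius on the roots and fixes the rational one, and an element fixing all three roots is the identity). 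For (5), use that $G$ acts freely on $\bP^1(\bF_{q^3})\setminus\bP^1(\bF_q)$ (an element fixing a cubic point fixes its two conjugates, hence three points, hence is trivial); since this set has $q^3-q=|G|$ points, the action is simply transitive, so there is $g\in PGL_2(q)$ with $g(\theta)=\theta^q$, and rationality of $g$ forces $g(\theta^q)=\theta^{q^2}$, $g(\theta^{q^2})=\theta$, $g^3=1$. This gives the cyclic stabilizer of order $3$ (and, as a by-product, the transitivity on Frobenius triples that you invoked as a standard fact). With these repairs your argument is a complete, self-contained proof of the lemma.
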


 We recall that the osculating plane $O_{(s,t)}$ to $C$ at a point $(Xt-Ys)^3$ consists of all elements of $PG(V_3)$ which are divisible by $(Xt-Ys)$. We use the notation $\bP(V_m \otimes \overline{\bF_q})$ for the projective space of degree $m$ binary  forms $f(X,Y)$ over an algebraic closure $\overline{\bF_q}$. For a line of $PG(V_3)$, let $\bar{L}$ denote the line of  $\bP(V_3 \otimes \overline{\bF_q})$ consisting of the $\overline{\bF_q}$-points of $L$. If $L$ is not contained in an osculating plane of $C$, then $\bar{L}$ intersects each osculating plane of $C$ in a unique point. In other words, for each $(Xt-Ys) \in \bP(V \otimes \overline{\bF_q})$, the pencil $\bar{L}$ contains a unique cubic form  $(Xt-Ys) \cdot h_{(s,t)}^L(X,Y)$  in  $\bP(V_3 \otimes \overline{\bF_q})$. We will now determine the quadratic form $h_{(s,t)}^L(X,Y)$ in terms of the coordinates $(z_0, \dots, z_5)$ of $L$. The bilinear form $\Psi_L$ on $V_2\otimes \overline{\bF_q}$ is non-degenerate, and hence gives a polarity on $\bP(V_2\otimes \overline{\bF_q})$. The polar dual of a point $f \in \bP(V_2\otimes \overline{\bF_q})$ with respect to this polarity will be denoted $f^{\perp_{\Psi_L}}$. The polar dual of $f$ with respect to the polarity given by $\Omega_2$ will be denoted $f^{\perp_{\Omega_2}}$.
\begin{prop}\label{hLXY}
If $L$ does not lie in any osculating plane of $C$, then  for each $(Xt-Ys) \in \bP(V \otimes \overline{\bF_q})$, the quadratic form  $h_{(s,t)}^L(X,Y)$  
is the unique element of 
$\bP(V_2\otimes \overline{\bF_q})$ such that 
\[ (h_{(s,t)}^L)^{\perp_{\Omega_2}} = \left( (Xt-Ys)^2 \right)^{\perp_{\Psi_L}}.\]
  In coordinates
\[h_{(s,t)}^L(X,Y) =\bbsm X^2 &  XY & Y^2 \besm M_z \bbsm s^2 \\ st \\ t^2 \besm.\]
\end{prop}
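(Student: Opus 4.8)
The plan is to verify the claimed formula by a direct computation, after recasting the geometric condition ``$\bar L$ meets the osculating plane $O_{(s,t)}$ in the cubic $(Xt-Ys)\cdot h$'' as a statement about the polarity $\Psi_L$. First I would observe that the pencil $\bar L$ contains the form $(Xt-Ys)\cdot h_{(s,t)}^L$ precisely when this form lies in the one-dimensional kernel condition cut out by $\bar L$ inside $O_{(s,t)}$. The key translation is this: $(Xt-Ys)\cdot h$ lies on $\bar L$ if and only if, for every cubic form $w$ in the polar dual line $\bar L^{\perp_{\Omega_3}}$, we have $\Omega_3\big((Xt-Ys)h,\,w\big)=0$. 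Using the factorization structure of $\Omega_4$ versus $\Omega_3$ together with the definition $\langle f_1,f_2\rangle_\varphi=\Omega_4(f_1f_2,\varphi)$ and its $z_5$-twisted version $M_z$ from \eqref{eq:Mz}, I expect the pairing $\Omega_3\big((Xt-Ys)h,\,w\big)$ over all $w\in \bar L^{\perp}$ to reduce to the bilinear pairing $\Psi_L\big(h,\,(Xt-Ys)^2\big)$ evaluated via the Gram matrix $M_z$ in the basis $\mB_2$. Concretely, the condition that $(Xt-Ys)h\in\bar L$ becomes $\Psi_L\big(h,(Xt-Ys)^2\big)\cdot(\text{something}) = 0$ in a way that pins down $h$ up to scalar as the $\Psi_L$-polar of $(Xt-Ys)^2$, transported back through $\Omega_2$. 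This is exactly the asserted characterization $(h_{(s,t)}^L)^{\perp_{\Omega_2}} = \big((Xt-Ys)^2\big)^{\perp_{\Psi_L}}$.

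Next I would make the polarity identity explicit in coordinates. The point $(Xt-Ys)^2\in\bP(V_2\otimes\overline{\bF_q})$ has coordinate vector $v=(s^2,st,t^2)^\top$ in the basis $\mB_2$ (up to the usual signs/binomials built into $\mB_2$, which I would track carefully). Its $\Psi_L$-polar dual is the hyperplane $\{f : v^\top M_z\, (\text{coords of }f)=0\}$; equivalently the linear functional represented by the covector $M_z v$. On the other hand $(h_{(s,t)}^L)^{\perp_{\Omega_2}}$ is represented by the covector $A_2\,(\text{coords of }h)$. Equating the two one-dimensional subspaces of covectors gives $A_2\,(\text{coords of }h) \propto M_z\,v$, hence $(\text{coords of }h)\propto A_2^{-1} M_z v$. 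A short check that $A_2^{-1} M_z$ reproduces the matrix appearing in the statement — i.e. that $\bbsm X^2 & XY & Y^2\besm M_z \bbsm s^2\\ st\\ t^2\besm$, read as a quadratic form in $(X,Y)$, has coordinate vector proportional to $A_2^{-1}M_z v$ in the basis $\mB_2$ — then finishes the coordinate formula. This last identification is purely a matter of comparing the monomial basis $(X^2,XY,Y^2)$ with the scaled basis $\mB_2=(Y^2,-2YX,X^2)$ and absorbing the resulting diagonal change-of-basis into the overall projective scalar, using $A_2=\bbsm &&1\\&-2&\\1&&\besm$.

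For existence and uniqueness of $h_{(s,t)}^L$ I would invoke Corollary \ref{Psi_deg}: since $L$ does not lie on any osculating plane of $C$, the form $\Psi_L$ is non-degenerate, so $M_z$ is invertible, the polar dual $\big((Xt-Ys)^2\big)^{\perp_{\Psi_L}}$ is a genuine hyperplane, $M_z v\neq 0$, and the quadratic form $\bbsm X^2 & XY & Y^2\besm M_z \bbsm s^2\\ st\\ t^2\besm$ is not identically zero; hence it defines a well-defined point of $\bP(V_2\otimes\overline{\bF_q})$. Uniqueness is immediate because $\bar L$ meets the plane $O_{(s,t)}$ in a single point when $\bar L\not\subset O_{(s,t)}$, which holds for all $(s,t)$ under our hypothesis.

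\textbf{Expected main obstacle.} The one genuinely delicate step is the first reduction: showing that the incidence condition ``$(Xt-Ys)h\in\bar L$'' is equivalent to the single bilinear equation against $(Xt-Ys)^2$ under $\Psi_L$, rather than against all of $\bar L^{\perp_{\Omega_3}}$. This requires using that multiplication $V_1\otimes V_2\to V_3$ is dual (via $\Omega_3$, $\Omega_4$, $\Omega_2$, $\Omega_1$) to the operations packaged into $M_z$, and in particular that the twist by $z_5$ — i.e. the discrepancy between $\Omega_3$-orthogonality of $\bar L$ and plain multiplication into $V_4$ — is exactly what converts $M_\varphi$ into $M_z$. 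I expect this to follow by combining the definition $\langle f_1,f_2\rangle_\varphi=\Omega_4(f_1f_2,\varphi)$ with the identity $M_z = z_5 A_2 + M_\varphi$ from \eqref{eq:Mz} and the Klein/Plücker dictionary \eqref{eq:newcoords}, but keeping all the scalar factors $\det(g)$, the $\frac13$'s in $I(\varphi)$, and the binomial weights of $\mB_2,\mB_3,\mB_4$ consistent is where the real care is needed; everything after that is bookkeeping with $3\times 3$ matrices.
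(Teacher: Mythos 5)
Your second and third paragraphs are fine and match the formal part of the paper's argument: once one knows the $\mB_2$-coordinates of $(Xt-Ys)^2$ are $(s^2,st,t^2)$ and the Gram matrices of $\Omega_2$ and $\Psi_L$ are $A_2$ and $M_z$, equality of the two polars is exactly the proportionality of covectors $A_2\,(\text{coords of }h)\propto M_z(s^2,st,t^2)^\top$, i.e.\ $h\propto \bbsm X^2 & XY & Y^2\besm M_z \bbsm s^2\\ st\\ t^2\besm$; and non-degeneracy of $\Psi_L$ (Corollary \ref{Psi_deg}) plus the fact that $\bar L$ meets each osculating plane in a single point gives existence and uniqueness. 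But the substantive claim of the proposition is that the quadratic form $h^L_{(s,t)}$ \emph{already defined} by the incidence condition $(Xt-Ys)h^L_{(s,t)}\in\bar L$ has coordinates $M_z(s^2,st,t^2)^\top$, and this is precisely the step you do not prove: you describe a hoped-for duality reduction ("I expect the pairing \dots to reduce to \dots") and yourself flag it as the main obstacle. As it stands this is a genuine gap, not bookkeeping. Moreover, the reduction as you phrase it has the wrong shape: membership $(Xt-Ys)h\in\bar L$ is equivalent to \emph{two} linear conditions on $h$ (vanishing of $\Omega_3((Xt-Ys)h,w)$ for $w$ in the $2$-dimensional space $\bar L^{\perp_{\Omega_3}}$), not to the vanishing of a single pairing of $h$ against $(Xt-Ys)^2$. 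Indeed, for the correct $h^L_{(s,t)}$ one computes $\Psi_L\big(h^L_{(s,t)},(Xt-Ys)^2\big)$ to be (up to a constant) the discriminant value $D_L(s,t)$ of Definition \ref{h_def}, which is generically nonzero; so the condition you would need is the proportionality $A_2 h\propto M_z v$, and turning the two $\Omega_3$-conditions into that proportionality, with the $z_5$-twist accounted for, is essentially the whole computation — it cannot be waved through.

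For comparison, the paper's route avoids $\Omega_3$-duality entirely and is more elementary than what you sketch. Writing $\bar L$ as the pencil $(\mu,\nu)\mapsto \mu u+\nu v$, divisibility of a cubic by $(Xt-Ys)$ is the same as vanishing at $(s,t)$, so the unique member of $\bar L$ in $O_{(s,t)}$ is $v(s,t)\,u(X,Y)-u(s,t)\,v(X,Y)$ and hence
\[ h^L_{(s,t)}(X,Y)=\frac{v(s,t)\,u(X,Y)-u(s,t)\,v(X,Y)}{Xt-Ys}, \]
whose coefficients $\alpha_i(s,t)$ are bilinear and antisymmetric in $(u,v)$, hence expressions in the Pl\"ucker coordinates $p_{ij}$; a direct calculation using the dictionary \eqref{eq:newcoords} then gives $(\alpha_1,\alpha_2,\alpha_3)^\top=M_z(s^2,st,t^2)^\top$. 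If you want to keep your structural approach, you must actually prove the duality identity you invoke (showing how the two conditions from $\bar L^{\perp_{\Omega_3}}$ repackage as the stated polarity identity, with all binomial weights and the $z_5 A_2$ term in $M_z$ verified); otherwise, replace the first paragraph by the explicit pencil computation above.
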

\begin{proof}
If $h_{(s,t)}^L = X^2 \alpha_1(s,t) +\alpha_2(s,t) XY + \alpha_3(s,t) Y^2$,
then 
\[ (h_{(s,t)}^L)^{\perp_{\Omega_2}}=\{aY^2 -2b XY +cX^2 \colon 
\bbsm a & b & c \besm \bbsm \alpha_1(s,t) \\ \alpha_2(s,t) \\ \alpha_3(s,t) \besm = 0\}.\] 
On the other hand
\[ \left( (Xt-Ys)^2 \right)^{\perp_{\Psi_L}}= \{ a Y^2 -2bXY + cX^2 \colon \bbsm a & b & c \besm M_z \bbsm s^2 \\ st \\t^2 \besm=0\}. 
\]
Thus, the condition that $(h_{(s,t)}^L)^{\perp_{\Omega_2}} = \left( (Xt-Ys)^2 \right)^{\perp_{\Psi_L}}$ is equivalent to 
\[h_{(s,t)}^L(X,Y) =\bbsm X^2 &  XY & Y^2 \besm M_z \bbsm s^2 \\ st \\ t^2 \besm.\]
We represent  $L$  as a pencil $(\mu,\nu) \mapsto \mu u(X,Y) +\nu v(X,Y)$ for $(\mu,\nu) \in \bP^1(\overline{\bF_q})$. Since  $L$
is not contained in any osculating plane of $C$, and since $O_{(s,t)}$  consists of all cubic forms divisible by $(Xt-Ys)$,
the quantities $u(s,t)$ and $v(s,t)$ do not simultaneously vanish. Therefore,  the unique element of  $\bar{L}$ in $O_{(s,t)}$
is $(Xt-Ys) h_{(s,t)}^L(X,Y)$ where 
\[ h_{(s,t)}^L(X,Y)=\frac{v(s,t) u(X,Y) -u(s,t) v(X,Y)}{Xt-Ys}.\]
Writing $h_{(s,t)}^L(X,Y) = X^2 \alpha_1(s,t) +\alpha_2(s,t) XY + \alpha_3(s,t) Y^2$, it is clear that 
$\alpha_i(s,t)$ only depend on the 
Pl\"ucker coordinates $u_iv_j - u_jv_i=p_{ij}$ of $L$, where $(u_0, \dots, u_3)$  and $(v_0, \dots, v_3)$ are the coordinates of $u(X,Y)$ and $v(X,Y)$ with respect to the basis $\mathcal B_3$. In terms of the coordinates $(z_0,\dots,z_5)$ with respect to the basis $\mathcal E_5$ we have 
\[ \bbm \alpha_1(s,t)\\ \alpha_2(s,t)\\ \alpha_3(s,t) \bem = M_z \bbm s^2 \\ st \\t^2 \bem. \]
\end{proof}
\begin{lem} \label{h_lem} Let $L$ be   line  which is not contained in any osculating plane of $C$. 
For $j \in \{1,2\}$ let 
\[ a_j(L)=\{(s,t) \in PG(1,q) \colon  (Xt-Ys)h^{L}_{(s,t)}(X,Y) \text{ has  $j$ distinct linear factors}\}. \]
For $i \in \{1,3\}$ let:
\[ a_{3,i}(L) = \{(s,t) \in PG(1,q) \colon  (Xt-Ys)h^{L}_{(s,t)}  \text{ has $3$ distinct linear factors of which $i$ are over $\bF_q$}\}.\]
Let $\mS$ denote the set of $(q+1)$ points on $L$. Then we have 
\begin{enumerate}
\item $|\mS \cap O_1|=|a_1(L)|$,
\item $|\mS \cap O_2|= |a_2(L)|/2$,
\item $ |\mS \cap O_3|= |a_{3,3}(L)|/3$,
\item $|\mS \cap O_4| = |a_{3,1}(L)|$,
\item $|\mS \cap O_5|= (q+1)- (|a_1(L)|+  |\frac{|a_2(L)|}{2}+ \frac{|a_{3,3}(L)|}{3} +|a_{3,1}(L)|)$. 
\end{enumerate}
\end{lem}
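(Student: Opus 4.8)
The plan is to connect the orbit of a point $P = (Xt-Ys)h^L_{(s,t)}(X,Y)$ of $L$ lying in the osculating plane $O_{(s,t)}$ to the factorization type of this cubic form, using Lemma \ref{cubic}. Observe first that since $L$ does not lie on any osculating plane of $C$ (this is guaranteed when $L$ is generic, though the lemma only assumes the weaker hypothesis), each $(s,t) \in PG(1,q)$ contributes exactly one point of $\mS$, namely the unique form in $L$ divisible by $(Xt-Ys)$, and conversely every point $P$ of $L$ arises this way: indeed the cubic form representing $P$ has some root over $\overline{\bF_q}$, hence $P$ lies in some osculating plane $O_{(\sigma,\tau)}$ of $\bar C$, and if all roots of $P$ were outside $PG(1,q)$ then $P$ would contribute nothing to $\mS \cap O_i$ for $i \leq 4$ — this is exactly the content of item (5). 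So the first step is to set up the map $(s,t) \mapsto$ (the point of $\mS$ in $O_{(s,t)}$) from $PG(1,q)$ to $\mS$, and analyze its fibers.

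Next I would run through the five orbit types. A point $P \in \mS$ lies in $O_1 = G \cdot X^3$ iff the cubic form representing $P$ is a cube $(Xt-Ys)^3$ with $(s,t) \in PG(1,q)$; equivalently $(Xt-Ys)h^L_{(s,t)}$ has a single linear factor over $\bF_q$ (of multiplicity $3$). This happens exactly for $(s,t) \in a_1(L)$, and the map $(s,t) \mapsto P$ is injective on $a_1(L)$ since the form $(Xt-Ys)^3$ determines $(s,t)$; this gives (1). For $O_2 = G \cdot X^2 Y$: $P$ is represented by a form with exactly two distinct linear factors over $\bF_q$, one of multiplicity $2$; such a form is $(Xt-Ys)^2(X\tau - Y\sigma)$ and lies in both $O_{(s,t)}$ and $O_{(\sigma,\tau)}$, so each such $P$ is hit by exactly two values of the parameter in $a_2(L)$, namely $(s,t)$ and $(\sigma,\tau)$ — both of which lie in $a_2(L)$ since $(Xt-Ys)h^L_{(s,t)}$ and $(X\tau-Y\sigma)h^L_{(\sigma,\tau)}$ both equal (a scalar times) this form. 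That gives (2). For $O_3 = G \cdot XY(X-Y)$: $P$ is represented by a product of three distinct linear forms all defined over $\bF_q$, lying in the three osculating planes at those three points of $C(\bF_q)$; hence $P$ is hit by exactly three parameters in $a_{3,3}(L)$, giving (3). For $O_4 = G \cdot X(X^2 - \epsilon Y^2)$: $P$ is a product of a linear form over $\bF_q$ and an irreducible quadratic over $\bF_q$; only the rational root gives a point of $PG(1,q)$, so $P$ is hit by exactly one parameter, which lies in $a_{3,1}(L)$, giving (4).

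For item (5) I would argue by counting: the parameter set $PG(1,q)$ has $q+1$ elements, and it is partitioned (for a line not on any osculating plane — here one should note that the degenerate case $j=1$ with a double-plus-... subtlety, and forms with a repeated factor, can occur, but for the ultimate application $L$ is generic so $\varphi_L$ has nonzero discriminant and the forms $(Xt-Ys)h^L_{(s,t)}$ are generically reduced; in any case the sets $a_1, a_2, a_{3,1}, a_{3,3}$ are defined to capture precisely the types contributing to $O_1,\dots,O_4$) into the values contributing to $\mS \cap O_1$ through $\mS \cap O_4$ together with the remaining values, which are exactly those $(s,t)$ for which $(Xt-Ys)h^L_{(s,t)}$ has a linear factor over $\bF_q$ but the cofactor has no rational root, i.e.\ the form is $(Xt-Ys) \cdot q(X,Y)$ with $q$ irreducible over $\bF_q$ — but wait, that is type $O_4$. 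The truly remaining values are those where the form has \emph{no} root in $PG(1,q)$ at all, which forces it to be a product of three Galois-conjugate linear forms over $\bF_{q^3}$, i.e.\ $P \in O_5$, and each such $P$ is not hit by any parameter. So the count of parameters used up is $|a_1(L)| + |a_2(L)| + |a_{3,3}(L)| + |a_{3,1}(L)|$, while $|\mS \cap O_1| + |\mS\cap O_2| + |\mS\cap O_3| + |\mS\cap O_4|$ equals $|a_1(L)| + \tfrac{|a_2(L)|}{2} + \tfrac{|a_{3,3}(L)|}{3} + |a_{3,1}(L)|$ by (1)--(4). Hence
\[ |\mS \cap O_5| = (q+1) - \left( |a_1(L)| + \tfrac{|a_2(L)|}{2} + \tfrac{|a_{3,3}(L)|}{3} + |a_{3,1}(L)| \right), \]
which is (5). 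The main obstacle is the bookkeeping of the fiber sizes of the map $(s,t)\mapsto P$: one must be careful that for each orbit type every osculating plane through $P$ that contributes a parameter indeed lands in the correct set $a_j$ or $a_{3,i}$ (in particular that $h^L_{(s,t)}$ does not accidentally acquire extra structure), and that the hypothesis ``not contained in an osculating plane'' is enough for the map to be everywhere defined; the generic hypothesis in the final application removes the remaining edge cases with repeated factors.
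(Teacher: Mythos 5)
Your proposal is correct and follows essentially the same route as the paper: both analyze the fibers of the map $(s,t)\mapsto$ (unique point of $L$ in the osculating plane $O_{(s,t)}$), observing that points of $\mS\cap O_1,\dots,\mS\cap O_5$ are hit by exactly $1,2,3,1,0$ parameters respectively, and then obtain (5) from $|\mS|=q+1$. The only blemish is the momentary confusion in your discussion of (5) about ``remaining parameter values'' (every $(s,t)\in PG(1,q)$ in fact lands in one of $a_1,a_2,a_{3,3},a_{3,1}$, while points of $\mS\cap O_5$ are precisely those hit by no parameter), but you self-correct and the final count is the same as the paper's.
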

\begin{proof}
Given distinct points $(s_1,t_1), (s_2,t_2) \in PG(1,q)$ we note that \[ (Xt_1-Ys_1)h^{L}_{(s_1,t_1)}=(Xt_2-Ys_2)h^{L}_{(s_2,t_2)}, \]  if and only if both cubic forms equal
\[ (Xt_1-Ys_1)(Xt_2-Ys_2)(Xt_3-Ys_3), \]  
for some $(s_3,t_3) \in PG(1,q)$. Thus $|a_{3,3}(L)|=3 |\mS \cap O_3|$ and $|a_2(L)|=2 |\mS \cap O_2|$.   Similarly, we get  bijections $\mS \cap O_4 \to a_{3,1}(L)$ and $\mS \cap O_1 \to a_{1}(L)$ that take a cubic form $f \in \mS$ to $(s,t) \in PG(1,q)$  where $(Xt-Ys)$ is the unique  factor of $f$ over $\bF_q$.  The remaining quantity $|\mS \cap O_5|$ is $|\mS| - \sum_{i=1}^4 |\mS \cap O_i|$.
\end{proof}
\begin{definition} \label{h_def}
Let $L$ be  a line which is not contained in any osculating plane of $C$. 
The discriminant of the quadratic form $h_{(s,t)}^L(X,Y)$ is given by $4 D_L(s,t)$ where $D_L(X,Y) \in V_4$ is:
 \[ D_L(X,Y)  = -\tfrac{1}{2} \bbsm X^2 & XY & Y^2 \besm M_z  A_2^{-1} M_z \bbsm X^2 \\ XY \\ Y^2 \besm. \]
 We can expand this as 
 \begin{multline} \label{eq:D_L_long} 
 D_L(X,Y)= 
 -z_5\varphi_L(X,Y)
 +(z_1^2-z_0z_2)Y^4 + 2(z_0z_3-z_1z_2)Y^3X \\  -(z_0z_4+2z_1z_3-3z_2^2)X^2Y^2+2 (z_1z_4-z_2z_3)YX^3 +(z_3^2 -z_2z_4)X^4.
 \end{multline}
\end{definition}
Let 
\beq \label{eq:nu_def} \nu_L = \#\{(s,t) \in PG(1,q) \colon D_L(s,t) \text{ is a non-zero square in $\bF_q$}\}.\eeq
Let 
\beq \label{eq:eta_def} \eta_L:=\begin{cases} i &\text{if $\varphi_L \in \mF_i$ for $i=1,2,4$}\\
 0 &\text{if $\varphi_L$ is in $\mF_2'$ or $\mF_4'$}. \end{cases}\eeq
\begin{prop}\label{gen_prop}
Let $L$ be a generic line of $\bP(V_3)$. Let $D_L(X,Y)$, $\eta_L$ and $\nu_L$ be as defined above. The quartic form $D_L(X,Y)$  has non zero discriminant, and is the same type $\mF_1, \mF_2, \mF_4, \mF_2', \mF_4'$ as $\varphi_L$.
We have 
\begin{enumerate}
\item $|\mS \cap O_1|=0$,
\item $|\mS \cap O_2|= \eta_L$,
\item $ |\mS \cap O_3|= (\nu_L-\eta_L)/3$,
\item $|\mS \cap O_4| = q+1-\nu_L-\eta_L$,
\item $|\mS \cap O_5|= (2 \nu_L+\eta_L)/3$. 
\end{enumerate}
\end{prop}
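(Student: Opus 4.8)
The plan is to read off the decomposition $\mS=\cup_i(\mS\cap O_i)$ from Lemma~\ref{h_lem} after determining, for every $(s,t)\in PG(1,q)$, the factorization type over $\bF_q$ of the cubic form $(Xt-Ys)h^{L}_{(s,t)}$, which by Proposition~\ref{hLXY} is the unique point of $\mS$ lying in the osculating plane $O_{(s,t)}$. Since $L$ is generic we have $\Delta(\varphi_L)\neq 0$, so $\varphi_L$ has four distinct roots over $\overline{\bF_q}$ and $\bar L$ meets neither $C$ nor any osculating plane of $C$; in particular $h^{L}_{(s,t)}$ is defined and is never proportional to $(Xt-Ys)^2$ (else $(Xt-Ys)h^L_{(s,t)}=(Xt-Ys)^3\in C$), which already gives $a_1(L)=\emptyset$, i.e.\ part~(1). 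Substituting $(X,Y)=(s,t)$ into the formula of Proposition~\ref{hLXY} and using~\eqref{eq:Mz} yields the identity $h^{L}_{(s,t)}(s,t)=\varphi_L(s,t)$; hence by Theorem~\ref{lem_pi} one has $(Xt-Ys)\mid h^{L}_{(s,t)}$ precisely when $\varphi_L(s,t)=0$, while by Definition~\ref{h_def} the quadratic $h^{L}_{(s,t)}$ has a repeated factor precisely when $D_L(s,t)=0$ (its discriminant being $4D_L(s,t)$).

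Next I would run the case analysis. If $\varphi_L(s,t)=0$, then $h^{L}_{(s,t)}$ has the root $(s,t)\in PG(1,q)$, so its other root lies in $PG(1,q)$ and is distinct from $(s,t)$ (otherwise $\bar L$ would meet $C$); thus $D_L(s,t)$ is a nonzero square and $(Xt-Ys)h^{L}_{(s,t)}$ equals $(Xt-Ys)^2$ times a distinct $\bF_q$-rational linear form, so $(s,t)\in a_2(L)$. If $\varphi_L(s,t)\neq 0$, then $(Xt-Ys)\nmid h^{L}_{(s,t)}$, and the three linear factors of $(Xt-Ys)h^{L}_{(s,t)}$ are distinct exactly when $D_L(s,t)\neq 0$: when $D_L(s,t)=0$ we get $(s,t)\in a_2(L)$, when $D_L(s,t)$ is a nonzero square we get $(s,t)\in a_{3,3}(L)$, and when $D_L(s,t)$ is a nonsquare we get $(s,t)\in a_{3,1}(L)$. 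Writing $N_0=\#\{(s,t)\in PG(1,q):D_L(s,t)=0\}$ and recalling that $\varphi_L$ has exactly $\eta_L$ roots in $PG(1,q)$ by definition of $\eta_L$ (as $\Delta(\varphi_L)\neq 0$), this bookkeeping gives
\[|a_1(L)|=0,\qquad |a_2(L)|=\eta_L+N_0,\qquad |a_{3,3}(L)|=\nu_L-\eta_L,\qquad |a_{3,1}(L)|=q+1-\nu_L-N_0.\]

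The crux is to prove that $D_L$ has nonzero discriminant and is of the same type ($\mF_1,\mF_2,\mF_4,\mF_2',\mF_4'$) as $\varphi_L$; in particular this forces $N_0=\eta_L$. I would establish it by producing a Galois-equivariant bijection between the zero sets $R(\varphi_L),R(D_L)\subseteq PG(1,\overline{\bF_q})$ of $\varphi_L$ and of $D_L$. Given $(s_0,t_0)\in R(\varphi_L)$, Theorem~\ref{lem_pi} shows $\bar L$ contains a form divisible by $(Xt_0-Ys_0)^2$; this form is unique (otherwise $\bar L$ would be the tangent line to $C$ at $(Xt_0-Ys_0)^3$, hence lie in an osculating plane, contradicting genericity), say $(Xt_0-Ys_0)^2(X\tau_0-Y\sigma_0)$, with $(\sigma_0,\tau_0)\neq(s_0,t_0)$ since $\bar L$ does not meet $C$. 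As this form is the unique point of $\bar L$ in $O_{(\sigma_0,\tau_0)}$, comparison with Proposition~\ref{hLXY} shows $h^{L}_{(\sigma_0,\tau_0)}$ is proportional to $(Xt_0-Ys_0)^2$, so $(\sigma_0,\tau_0)\in R(D_L)$. Reversing the construction (recover $(s_0,t_0)$ as the repeated root of $h^{L}_{(\sigma_0,\tau_0)}$) shows $(s_0,t_0)\mapsto(\sigma_0,\tau_0)$ is a bijection, and since every ingredient is defined over $\bF_q$ it commutes with the Frobenius. Because $\Delta(\varphi_L)\neq 0$, the set $R(\varphi_L)$ has exactly four elements, hence so does $R(D_L)$; therefore $D_L$ is a nonzero binary quartic with four distinct roots (so $\Delta(D_L)\neq 0$), and the Frobenius orbit sizes on $R(D_L)$ agree with those on $R(\varphi_L)$, which pins down the type of $D_L$ as that of $\varphi_L$. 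In particular $N_0=|R(D_L)\cap PG(1,q)|=|R(\varphi_L)\cap PG(1,q)|=\eta_L$.

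Finally, inserting $N_0=\eta_L$ into the four displayed cardinalities and applying Lemma~\ref{h_lem} gives parts (1)--(4) directly, and part (5) follows from $q+1-\bigl(0+\eta_L+\tfrac{\nu_L-\eta_L}{3}+(q+1-\eta_L-\nu_L)\bigr)=\tfrac{2\nu_L+\eta_L}{3}$. I expect the bijection in the third paragraph to be the main obstacle: one must verify carefully that all degenerate possibilities --- $\bar L$ meeting $C$, lying on an osculating plane, being a tangent line of $C$, or $D_L$ vanishing identically --- are excluded by the genericity of $L$, and that the assignments ``$(s_0,t_0)\mapsto$ residual factor'' and ``$(\sigma,\tau)\mapsto$ repeated root of $h^{L}_{(\sigma,\tau)}$'' really do invert one another.
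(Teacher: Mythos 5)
Your proposal is correct and follows essentially the same route as the paper: you use $h^{L}_{(s,t)}(s,t)=\varphi_L(s,t)$ and the discriminant identity to sort the cubics $(Xt-Ys)h^{L}_{(s,t)}$ into the classes of Lemma~\ref{h_lem}, and you prove $\Delta(D_L)\neq 0$ and type-preservation via the correspondence sending a root of $\varphi_L$ to the residual root of the unique form of $\bar L$ divisible by its square, which is exactly the paper's map $(s_i',t_i')\mapsto(s_i,t_i)$ (the paper phrases Galois-compatibility through the minimal fields of definition $F_i$ rather than Frobenius equivariance, and identifies $a_2(L)$ directly instead of introducing $N_0$, but the content is the same). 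No gaps of substance.
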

\bep 
We note that 
\begin{align*} 
h^L_{(s,t)}(s,t) &= \bbsm s^2 & st & t^2 \besm M_z \bbsm s^2 \\st \\ t^2 \besm\\
&= z_5 \bbsm s^2 & st & t^2 \besm A_2 \bbsm s^2 \\ st \\ t^2 \besm + \bbsm s^2 & st & t^2 \besm M_{\varphi_L} \bbsm s^2 \\st \\t^2 \besm\\
&=\varphi_L(s,t).
\end{align*}
Therefore, 
\begin{align*}
h^L_{(s,t)}(X,Y)=(Xt'-Ys')^2 \Leftrightarrow  h^L_{(s',t')}(X,Y)=(Xt'-Ys')(Xt-Ys) \\
\Leftrightarrow h^L_{(s',t')}(s',t')=0 \Leftrightarrow \varphi_L(s',t')=0. \end{align*}
Let $\{ (Xt_i'-Ys_i') : 1 \leq i \leq 4\}$ be the four distinct linear factors of $\varphi_L$, and let $F_i \supset \bF_q$ be the smallest extension field of $\bF_q$ such that the linear form $(Xt_i'-Ys_i') \in \bP(V \otimes \overline{\bF_q})$ is defined over $F_i$. It follows that for each $1 \leq i \leq 4$, we have 
\[ (Xt_i'-Ys_i')h^L_{(s_i',t_i')}=(Xt_i'-Ys_i')^2(Xt_i-Ys_i),\]
for some linear forms $(Xt_i-Ys_i)$ defined over $F_i$.  Since $h^L_{(s_i,t_i)}=(Xt_i'-Ys_i')^2$, we note that $(Xt_i'-Ys_i')$ is a factor of $D_L(X,Y)$. Since $L$ does not meet $C$,  the cubic form $(Xt_i-Ys_i) h^L_{(s_i,t_i)} \neq (Xt_i-Ys_i)^3$ and hence $(Xt_i-Ys_i) \neq (Xt_i'-Ys_i')$ in $\bP(V \otimes \overline{\bF_q})$. Moreover, 
 Since $(Xt_i-Ys_i) h^L_{(s_i,t_i)}= (X t_i'-Ys_i')h^L_{(s_i',t_i')}$, it follows that $(Xt_i-Ys_i)=h^L_{(s_i,t_i)}/(Xt_i'-Ys_i')$ is defined over $F_i$. If 
  $(Xt_i-Ys_i)$ is defined over a subfield $K$ of $F_i$, then $(Xt_i'-Ys_i')=\tfrac{h^L_{(s_i,t_i)}}{(Xt_i-Ys_i)}$ is also defined over $K$. This shows that $F_i \supset \bF_q$ is the smallest field over which $(Xt_i-Ys_i)$ is defined.
 We also claim that the four forms $(Xt_i-Ys_i)$ are distinct in $\bP(V \otimes \overline{\bF_q})$: 
  if $(Xt_1-Ys_1)=(Xt_2-Ys_2)$ then 
\[ (Xt_1'-Ys_1')=\tfrac{h^L_{(s_1,t_1)}}{Xt_1-Ys_1}=\tfrac{h^L_{(s_2,t_2)}}{Xt_2-Ys_2}=(Xt_2'-Ys_2'),\]
which is not the case.   We conclude that $D_L(X,Y)$ does not have repeated roots, and it is in the same part of the decomposition $\mF_1 \cup \mF_2 \cup \mF_4 \cup\mF_2' \cup \mF_4'$ as $\varphi_L$. 
In particular, $D_L(X,Y)$ has the same number  $\eta_L$ of linear factors over $\bF_q$  as $\varphi_L$. 
Let 
\[ \tilde\nu_L = \#\{(s,t) \in PG(1,q) \colon D_L(s,t) \text{ is a non-square in $\bF_q$}\}.\]
Clearly  $\tilde \nu_L + \nu_L + \eta_L=|PG(1,q)|=q+1$. In order to prove the assertions $(1)-(5)$ in the Proposition statement, it suffices to show that the quantities $|a_j(L)|, |a_{3,i}(L)|$ of Lemma  \ref{h_lem} are \[|a_1(L)|=0,\; |a_2(L)|=2 \eta_L, \;|a_{3,3}(L)|=\nu_L-\eta_L, \;|a_{3,1}(L)|=\tilde\nu_L.\]
Since $L$ does not intersect $C$, the cubic form $(Xt-Ys)h^L_{(s,t)} \neq (Xt-Ys)^3$. This shows (i) that $|a_1(L)|=0$, and (ii) $(Xt-Ys)h^L_{(s,t)}$ has discriminant zero if and only if it has $2$ distinct linear factors over $\bF_q$ (namely $(Xt_i-Ys_i), (Xt_i'-Ys_i')$ where $(Xt_i'-Ys_i')$ is  a linear factor of $\varphi_L$). Hence, we get $|a_2(L)|/2= \eta_L$.
We have $a_{3,1}(L)$ consists of those $(s,t) \in PG(1,q)$ such that $  h^L_{(s,t)}$ is irreducible over $\bF_q$, i.e. $|a_{3,1}(L)|=\tilde \nu_L$.  Also $a_{3,3}(L)$ consists of those $(s,t) \in PG(1,q)$ such that $h^L_{(s,t)}$ has two distinct linear factors, none of which is $(Xt-Ys)$ itself. This means 
$|a_{3,3}(L)|=\nu_L- \eta_L$.
\eep

In the next result, we determine the invariants $I(D_L), J(D_L)$ ad $\jmath(D_L)$ of the quartic form $D_L$.
\begin{prop} \label{I*J*prop} Let $L$ be a generic line of $\bP(V_3)$ represented by the pair $(\varphi_L, z_5(L))$. We denote $z_5(L)$ as $\sqrt{I_\varphi}$.
\begin{align} \label{eq:I*J*}  I(D_L) &= J_\varphi \sqrt{I_\varphi} + \tfrac{5}{4}I_\varphi^2, \\
 \nonumber J(D_L)&= \tfrac{-1}{8}( 11  I_\varphi^3 + 2 J_\varphi^2 + 14 J_\varphi \sqrt{I_\varphi}^3 ),\\
 \nonumber  1 - \tfrac{1728}{\jmath(D_L)} &= \frac{(11+2r^2+14r)^2}{(4r+5)^3}, \quad r = \sqrt{ 1 - \tfrac{1728}{\jmath(\varphi)}}=\tfrac{J(\varphi)}{{\sqrt I_\varphi}^3}.
\end{align}
\end{prop}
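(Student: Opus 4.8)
Here is how I would go about proving Proposition \ref{I*J*prop}. The whole argument is designed to stay at the level of the $3\times 3$ matrices attached to quartic forms, so that $D_L$ never has to be expanded into coordinates. Write $M_z = z_5 A_2 + M_\varphi$ with $\varphi = \varphi_L$, and recall from the Klein quadric equation \eqref{eq:Q} that $z_5^2 = I_\varphi$. Substituting $M_z = z_5 A_2 + M_\varphi$ into Definition \ref{h_def} and using that $A_2$ is a null matrix for the quadratic form in $(X^2,XY,Y^2)$ (indeed $\bigl(X^2\ XY\ Y^2\bigr)A_2\bigl(X^2\ XY\ Y^2\bigr)^{\!\top}=2X^2Y^2-2(XY)^2=0$) shows at once that $D_L = -z_5\,\varphi + g$, where $g = -\tfrac12\bigl(X^2\ XY\ Y^2\bigr)M_\varphi A_2^{-1}M_\varphi\bigl(X^2\ XY\ Y^2\bigr)^{\!\top}$ is (a fixed scalar multiple of) the Hessian covariant of $\varphi$. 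The key auxiliary object is the matrix $N:=M_\varphi A_2^{-1}$. A one-line computation from \eqref{eq:Mphi} gives $\mathrm{tr}\,N=0$ and $\mathrm{tr}(N^2)=6I_\varphi$, while $\det N=(\det M_\varphi)(\det A_2)^{-1}=2J_\varphi$ (using $J_\varphi=\tfrac14\det M_\varphi$ and $\det A_2=2$). Hence the characteristic polynomial of $N$ is $\lambda^3-3I_\varphi\lambda-2J_\varphi$, and Cayley--Hamilton yields the identity
\[ N^3 = 3 I_\varphi\, N + 2 J_\varphi\,\mathrm{Id},\qquad\text{hence also } \ N^4 = 3 I_\varphi\, N^2 + 2 J_\varphi\, N. \]
Everything below is bookkeeping with traces and determinants of polynomials in $N$, the only recurring input being $z_5^2=I_\varphi$.

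Next I would pass from the raw symmetric matrix $P:=-\tfrac12 M_z A_2^{-1}M_z$ (which represents $D_L$ as a quadratic form in $(X^2,XY,Y^2)$) to the catalecticant matrix $M_{D_L}$. Two symmetric matrices represent the same such quadratic form exactly when they differ by a multiple of $A_2$, and $M_\psi$ is characterised among these representatives by $\mathrm{tr}(M_\psi A_2^{-1})=0$ (immediate from \eqref{eq:Mphi}); therefore $M_{D_L}=P+\kappa A_2$ with $\kappa=-\tfrac13\mathrm{tr}(PA_2^{-1})$. Setting $R:=M_z A_2^{-1}=z_5\,\mathrm{Id}+N$ we have $PA_2^{-1}=-\tfrac12R^2$, so $\kappa=\tfrac16\mathrm{tr}(R^2)=\tfrac16(3z_5^2+6I_\varphi)=\tfrac32 I_\varphi$. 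For the Catalecticant, $\det M_{D_L}=(\det A_2)\det(M_{D_L}A_2^{-1})=2\det(\kappa\,\mathrm{Id}-\tfrac12R^2)$. If $n_1,n_2,n_3$ are the eigenvalues of $N$ over $\overline{\bF_q}$ (the roots of $\lambda^3-3I_\varphi\lambda-2J_\varphi$), then $\det(\kappa\,\mathrm{Id}-\tfrac12R^2)=\prod_i\bigl(\kappa-\tfrac12(z_5+n_i)^2\bigr)=-\tfrac18\prod_i\bigl(n_i^2+2z_5n_i-2I_\varphi\bigr)$, again using $z_5^2=I_\varphi$ and $\kappa=\tfrac32 I_\varphi$. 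This last product equals $g_0(\alpha)g_0(\beta)$, where $g_0(\lambda)=\lambda^3-3I_\varphi\lambda-2J_\varphi$ and $\alpha,\beta$ are the (possibly conjugate) roots of $\lambda^2+2z_5\lambda-2I_\varphi$; reducing $g_0$ modulo that quadratic (with $z_5^2=I_\varphi$) gives $g_0(\lambda)\equiv 3I_\varphi\lambda-(4z_5I_\varphi+2J_\varphi)$, so that $g_0(\alpha)g_0(\beta)=9I_\varphi^2(\alpha\beta)-3I_\varphi(4z_5I_\varphi+2J_\varphi)(\alpha+\beta)+(4z_5I_\varphi+2J_\varphi)^2=22I_\varphi^3+28z_5I_\varphi J_\varphi+4J_\varphi^2$. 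Assembling, $J(D_L)=\tfrac14\det M_{D_L}=-\tfrac18\bigl(11I_\varphi^3+14z_5I_\varphi J_\varphi+2J_\varphi^2\bigr)$, which is the claimed formula since $z_5I_\varphi=z_5^3=\sqrt{I_\varphi}^{\,3}$.

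For $I(D_L)$ I would first record the identity $I(\psi)=\tfrac16\mathrm{tr}\bigl((M_\psi A_2^{-1})^2\bigr)$, checked directly from \eqref{eq:I} and \eqref{eq:Mphi}. Then $I(D_L)=\tfrac16\mathrm{tr}\bigl((M_{D_L}A_2^{-1})^2\bigr)=\tfrac16\mathrm{tr}\bigl((\kappa\,\mathrm{Id}-\tfrac12R^2)^2\bigr)=\tfrac16\bigl(\tfrac14\mathrm{tr}(R^4)-\kappa\,\mathrm{tr}(R^2)+3\kappa^2\bigr)$. Using $R=z_5\,\mathrm{Id}+N$ together with $\mathrm{tr}\,N=0$, $\mathrm{tr}(N^2)=6I_\varphi$ and $N^3=3I_\varphi N+2J_\varphi\,\mathrm{Id}$, one gets $\mathrm{tr}(R^2)=3z_5^2+6I_\varphi=9I_\varphi$ and $\mathrm{tr}(R^4)=3z_5^4+24z_5J_\varphi+36z_5^2I_\varphi+18I_\varphi^2=57I_\varphi^2+24z_5J_\varphi$ (after $z_5^2=I_\varphi$); with $\kappa=\tfrac32 I_\varphi$ this collapses to $I(D_L)=z_5J_\varphi+\tfrac54 I_\varphi^2$, as asserted. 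Finally, writing $r:=J_\varphi/z_5^3=J_\varphi/\sqrt{I_\varphi}^{\,3}$ — so that $r^2=J_\varphi^2/I_\varphi^3=1-1728/\jmath(\varphi)$ by \eqref{eq:jvarphi}, $z_5J_\varphi=rI_\varphi^2$, $z_5^3J_\varphi=rI_\varphi^3$ and $J_\varphi^2=r^2I_\varphi^3$ — the two formulas just proved become $I(D_L)=\tfrac14 I_\varphi^2(4r+5)$ and $J(D_L)=-\tfrac18 I_\varphi^3(2r^2+14r+11)$, whence $1-\tfrac{1728}{\jmath(D_L)}=J(D_L)^2/I(D_L)^3=(11+14r+2r^2)^2/(4r+5)^3$, which is the last assertion.

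The only place where care is needed — the main obstacle — is the normalization step in the second paragraph: the matrix $-\tfrac12 M_z A_2^{-1}M_z$ produced by Definition \ref{h_def} is \emph{not} literally the catalecticant matrix $M_{D_L}$, only congruent to it modulo the null matrix $A_2$, so one must identify the correction term $\kappa A_2$ before applying $J(\psi)=\tfrac14\det M_\psi$ and the trace formula for $I$. Once this is in place the computation is a mechanical exercise with the Cayley--Hamilton relation for $N$ and with elementary symmetric functions of the roots of $\lambda^3-3I_\varphi\lambda-2J_\varphi$; there is no residual subtlety in characteristic $\neq 2,3$.
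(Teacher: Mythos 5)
Your proof is correct, and it takes a genuinely different route from the paper's. The paper first establishes the covariance identity $\det(g)^4 D_{g\cdot L}=g\cdot D_L$, then invokes the normal form from \cite{KPP} (over an extension field one can move $\varphi_L$ to $X(-4Y^3+3I_\varphi YX^2-J_\varphi X^3)$, the cubic resolvent), expands $D_{gL}$ explicitly in coordinates via \eqref{eq:D_L_long}, and reads off $I$ and $J$ from \eqref{eq:I} and \eqref{eq:J}. You instead work invariantly with the $3\times 3$ Gram matrices: your identities $\mathrm{tr}(M_\varphi A_2^{-1})=0$, $\mathrm{tr}\bigl((M_\varphi A_2^{-1})^2\bigr)=6I_\varphi$, $\det(M_\varphi A_2^{-1})=2J_\varphi$ are correct, as are the Cayley--Hamilton relation $N^3=3I_\varphi N+2J_\varphi\,\mathrm{Id}$, the normalization $M_{D_L}=-\tfrac12 M_zA_2^{-1}M_z+\tfrac32 I_\varphi A_2$ (the unique representative of $D_L$ with $\mathrm{tr}(M_{D_L}A_2^{-1})=0$), and the ensuing computations: I re-derived $\mathrm{tr}(R^2)=9I_\varphi$, $\mathrm{tr}(R^4)=57I_\varphi^2+24z_5J_\varphi$ and $g_0(\alpha)g_0(\beta)=22I_\varphi^3+28z_5I_\varphi J_\varphi+4J_\varphi^2$, which give exactly \eqref{eq:I*J*}, and the passage to the $\jmath$-formula is the same formal step as in the paper. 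What your route buys: no extension fields, no appeal to the resolvent normal form of \cite{KPP}, and the role of the Klein relation $z_5^2=I_\varphi$ is completely transparent. What the paper's route buys: it is shorter given that the normal form is already available, and it produces the identities \eqref{eq:D_gL} and \eqref{eq:IJD_L} as byproducts, which Remark \ref{rem2} later cites; if your argument replaced the paper's, that remark would instead be justified directly from the scaling of $I_\varphi$, $J_\varphi$ and $z_5$ under $g$, which the formulas \eqref{eq:g2g3def} make straightforward.
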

\bep  Let $F \supset \bF_q$ be an extension  field of $\bF_q$.  Let $(z_0, \dots, z_5)$ denote the coordinates of $L$ with respect to the basis $\mE_5$ of $\wedge^2 V_3$. For $g \in GL_2(F)$, we first show that 
\beq \label{eq:D_gL}\det(g)^{4} D_{g \cdot L} =  g \cdot D_L.\eeq
We have 
\[ g \cdot D_L=
 \tfrac{-1}{2} \bbsm X^2 & XY & Y^2 \besm A_2 g_2 A_2^{-1} M_z  A_2^{-1} M_z A_2^{-1} g_2^\top A_2^{-1} \bbsm X^2 \\ XY \\ Y^2 \besm. \]
Using 
$g_2 A_2^{-1} g_2^{\top} = \det(g)^{-2} A_2^{-1}$
(from \eqref{eq:Aj_invariance}), we get 
\[ g \cdot D_L=
 \tfrac{-\det(g)^{-4}}{2} \bbsm X^2 & XY & Y^2 \besm g_2^{-\top} M_z^t  A_2^{-1} M_z g_2^{-1}   \bbsm X^2 \\ XY \\ Y^2 \besm. \]
Using \eqref{eq:M_invariance}
 $M_{g \cdot z} = \det(g)^{-5} g_{2}^{-\top} M_z g_{2}^{-1}$ we get 
\[ g \cdot D_L=
 \tfrac{-\det(g)^{6}}{2} \bbsm X^2 & XY & Y^2 \besm  
 M_{\tilde g_5 z} g_2  A_2^{-1} g_2^{\top} M_{\tilde g_5 z}   \bbsm X^2 \\ XY \\ Y^2 \besm. \]
Using $g_2 A_2^{-1} g_2^{\top} = \det(g)^{-2} A_2^{-1}$ once again, we get 
 \[ g \cdot D_L=
 \tfrac{-\det(g)^{4}}{2} \bbsm X^2 & XY & Y^2 \besm  
 M_{\tilde g_5 z}  A_2^{-1}  M_{g z}    \bbsm X^2 \\ XY \\ Y^2 \besm, \]
which is $\det(g)^4 D_{g \cdot  L}$.\\
Using \eqref{eq:IJ_invariance}, we get
\beq \label{eq:IJD_L} I(D_L)= \det(g)^{12} I(D_{gL}), \quad J(D_L)=\det(g)^{18} J(D_{gL}).  \eeq

As shown in \cite[\S 4]{KPP}, 
then there exists $g \in GL_2(F)$, where $F$ is any extension field of $\bF_q$ over which  $\varphi$ has a linear factor,  with the property that $g \cdot \varphi = \det(g)^{-2} X R_\varphi(X,Y)$ where $R_\varphi(X,Y) = -4 Y^3 + 3 I_\varphi YX^2 -J_\varphi X^3$ is the cubic resolvent of the quartic form $\varphi(X,Y)$.  In particular, 
\beq \label{eq:g_5_z}\tilde g_5 z=\det(g)^{-3} (0,1,0,-3 I_\varphi/4,-J_\varphi,z_5)\eeq
Using \eqref{eq:D_L_long} we have:
 \[ \det(g)^6 D_{g L}(X,Y)= Y^4+
 4 z_5  XY^3  +\tfrac{3I_\varphi}{2}  X^2Y^2 - (2 J_\varphi +3 z_5 I_\varphi)YX^3 +(z_5 J_\varphi +\tfrac{9 I_\varphi^2}{16}) X^4 \]
Further using \eqref{eq:I} and \eqref{eq:J}, we get:
\[ \det(g)^{12} I(D_{gL})= z_5 J_\varphi +\tfrac{5}{4} I_\varphi^2, \quad  \det(g)^{18} J(D_{gL})=
 \tfrac{-1}{8}( 11  I_\varphi^3 + 2 J_\varphi^2 + 14 J_\varphi  z_5 I_\varphi).
 \]
 Using this in \eqref{eq:IJD_L}, we get the asserted values \eqref{eq:I*J*} for $I(D_L)$ and $J(D_L)$. Finally, the identity  $1 - 1728/\jmath(f) = J^2(f)/I^3(f)$ for a quartic form $f$, gives the third equation in \eqref{eq:I*J*}.
 \eep
\section{The Elliptic curve $E_L$ associated to a line $L$} \label{S5}
Let $L$ be a  generic line $L$ of $\bP(V_3)$. Let  $D_L(X,Y)$, $I(D_L), J(D_L)$, $\nu(L)$ and $\eta(L)$ be as defined above in \eqref{eq:D_L_long}, \eqref{eq:I*J*}, \eqref{eq:nu_def} and \eqref{eq:eta_def}. We recall that 
\[\nu(L)=\#\{(x,y) \in PG(1,q) :   D_L(x,y) \text{ is a  non-zero square in $\bF_q$}\},\]
and $\eta(L)$ is the number of linear factors of $\varphi_L$ over $\bF_q$. 
We also define the quantities
\begin{align} \label{eq:g2g3def}
\nonumber    g_2(L)&=3 I(D_L)=3z_5(L) J_\varphi  + \tfrac{15}{4}I_\varphi^2, \\
    g_3(L)&=J(D_L)=\tfrac{-1}{8}( 11  I_\varphi^3 + 2 J_\varphi^2 + 14 z_5(L) J_\varphi  I_\varphi )
\end{align}

In the next theorem we obtain an expression for  $\nu_L$ in terms of the number $\#E_L(\bF_q)$ of points over $\bF_q$ of  the  elliptic curve $E_L$ in $\bP^2$ given  by:
\beq \label{eq:E_L_def} E_L: \quad  T^2=4 S^3  -g_2(L) S -g_3(L). \eeq
\begin{thm} \label{thm_elliptic} 
We have    \[ \nu_L = (\# E_L(\bF_q)-\eta(L))/2. \] 
\end{thm}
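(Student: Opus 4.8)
The plan is to count the points of $PG(1,q)$ according to the value of $D_L$ and relate this directly to $\#E_L(\bF_q)$. Write $q+1 = \nu_L + \tilde\nu_L + \eta_L$, where $\tilde\nu_L$ counts the points where $D_L$ is a nonzero non-square and $\eta_L$ counts the points where $D_L$ vanishes (these are exactly the $\bF_q$-roots of $D_L$, which by Proposition \ref{gen_prop} number $\eta_L$ since $D_L$ has the same type as $\varphi_L$). The quantity $\nu_L - \tilde\nu_L$ is the "character sum" $\sum_{(s,t)\in PG(1,q)} \chi(D_L(s,t))$, where $\chi$ is the quadratic character extended by $\chi(0)=0$; this is precisely the kind of sum that counts points on the hyperelliptic-type curve $U^2 = D_L(s,t)$.

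First I would make the connection between the genus-one curve $U^2 = D_L(X,Y)$ (a quartic model) and the Weierstrass curve $E_L$ explicit. Since $D_L$ is a binary quartic with nonzero discriminant and invariants $I(D_L), J(D_L)$, the classical theory of the quartic-to-Weierstrass reduction (as in the treatment of $2$-descent, e.g. via the cubic resolvent) says that the smooth projective model of $V^2 = D_L(x,1)$ is an elliptic curve isomorphic over $\bF_q$ to $Y^2 = X^3 - 27 I(D_L) X - 27 J(D_L)$, or after rescaling, to the curve $T^2 = 4S^3 - g_2(L)S - g_3(L)$ with $g_2(L) = 3I(D_L)$, $g_3(L) = J(D_L)$ as defined in \eqref{eq:g2g3def}. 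Here the crucial point, which should be checked using the $GL_2$-equivariance \eqref{eq:D_gL} together with the normal form \eqref{eq:g_5_z}, is that one may assume $D_L$ has a rational linear factor after passing to a suitable field, so that $V^2 = D_L(x,1)$ genuinely has a rational point; but in fact for the \emph{point count} one wants to avoid assuming rationality of a root, and instead argue that $\#\{(x,y,u): u^2 = D_L(x,y)\}$ on the smooth quartic model equals $\#E_L(\bF_q)$ regardless, because the two smooth genus-one curves are twists that become isomorphic over $\bF_q$ precisely when $I(D_L)$ is recognized correctly — and here $z_5(L)^2 = I(\varphi_L)$ forces $I(D_L) = z_5(L)J_\varphi + \tfrac54 I_\varphi^2$ to sit in the right square class. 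The safest route is: the smooth model $\mathcal C_L$ of $\{u^2 = D_L(x,y)\}$ is a genus-one curve with Jacobian $E_L$, and $\mathcal C_L$ has an $\bF_q$-point (for instance a point lying over a root of $D_L$ in $\bF_{q^d}$ is not enough, but one uses that over $\bF_q$ a curve of genus one over a finite field always has a rational point by the Hasse–Weil bound together with Lang's theorem / Chevalley–Warning), hence $\mathcal C_L \cong E_L$ over $\bF_q$ and $\#\mathcal C_L(\bF_q) = \#E_L(\bF_q)$.

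Next I would count $\#\mathcal C_L(\bF_q)$ affine-plus-infinity in terms of $\nu_L$ and $\eta_L$. Over the affine line $\{(s:t): t\neq 0\}$ together with the point at infinity, each $(s,t)\in PG(1,q)$ with $D_L(s,t)$ a nonzero square contributes two points $(s,t,\pm\sqrt{D_L(s,t)})$, each $(s,t)$ with $D_L(s,t)=0$ contributes one point, and each $(s,t)$ with $D_L(s,t)$ a non-square contributes none; additionally one must account for the points of $\mathcal C_L$ lying over the two points at infinity of the quartic model (i.e. where the leading coefficient governs), but since $D_L$ has degree exactly $4$ as a binary form the "points at infinity" are already homogenized into the $PG(1,q)$-count, so no separate correction is needed — the projective curve $u^2 w^2 = D_L(s,t)$ in weighted coordinates has exactly one or two points over each $(s:t)\in PG(1,q)$ according to $\chi(D_L(s,t))$, with the ramified fibers over the roots. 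This gives $\#\mathcal C_L(\bF_q) = 2\nu_L + \eta_L$. Combining with $\#\mathcal C_L(\bF_q) = \#E_L(\bF_q)$ yields $\#E_L(\bF_q) = 2\nu_L + \eta_L$, i.e. $\nu_L = (\#E_L(\bF_q) - \eta_L)/2$, as claimed.

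The main obstacle I anticipate is the precise identification of the smooth model of the quartic $u^2 = D_L(x,y)$ with the \emph{specific} Weierstrass curve $E_L$ over $\bF_q$ (not merely over $\overline{\bF_q}$): one must pin down the correct quadratic twist. The equivariance identity \eqref{eq:D_gL} shows $D_{g\cdot L} = \det(g)^{-4} g\cdot D_L$, so the square class of the leading behaviour of $D_L$ can shift by $\det(g)^{\pm}$ under the $G$-action, and one needs the constraint $z_5(L)^2 = I(\varphi_L)$ (equivalently $L$ lies on $\mQ$) to ensure the twist is the trivial one and that $E_L$ as written in \eqref{eq:E_L_def} is exactly the Jacobian with the right number of rational points. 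Concretely, I would verify this by passing to the normal form \eqref{eq:g_5_z}, where $D_{gL}$ becomes the explicit quartic $Y^4 + 4z_5 XY^3 + \tfrac{3I_\varphi}{2}X^2Y^2 - (2J_\varphi + 3z_5 I_\varphi)YX^3 + (z_5 J_\varphi + \tfrac{9I_\varphi^2}{16})X^4$ with \emph{leading coefficient $1$} (a square!), so $u^2 = D_{gL}(x,1)$ has the rational point "at infinity" $(x=1,u=1)$ and its standard reduction to Weierstrass form lands on $T^2 = 4S^3 - 3I(D_L)S - J(D_L)$ over $\bF_q$ with no twist. Since $\#E_L(\bF_q)$ is a $G$-orbit invariant (Remark \ref{rem2}) and $\nu_L, \eta_L$ likewise only depend on the orbit, transporting the equality back from the normal form to arbitrary $L$ is then immediate.
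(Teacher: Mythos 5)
Your argument is correct in substance and follows the same overall strategy as the paper: relate $\nu_L$ to the number of rational points of the smooth double cover $\mathcal{C}_L\colon u^2=D_L(x,y)$ of $PG(1,q)$ via the fiber count $\#\mathcal{C}_L(\bF_q)=2\nu_L+\eta_L$, and then identify $\#\mathcal{C}_L(\bF_q)$ with $\#E_L(\bF_q)$. Where you differ is in how that identification is made. The paper does it by hand, in two cases: when $\eta_L\geq 1$ it transforms $D_L$ over $\bF_q$ into the form $X\cdot(\text{cubic resolvent})$ and counts directly on the Weierstrass model, and when $\eta_L=0$ it uses the Hasse bound to produce a point where $D_L$ is a nonzero square, normalizes, and constructs an explicit $\bF_q$-isomorphism $\mE_L\cong E_L$ following Mordell. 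You instead invoke the classical fact (Weil/Hermite; standard in $2$-descent, e.g.\ Cremona's work on classical invariants) that the Jacobian of $v^2=g(u)$ for a squarefree quartic $g$ with invariants $I,J$ is $Y^2=X^3-27IX-27J$, which after an integral rescaling is exactly $T^2=4S^3-3I(D_L)S-J(D_L)=E_L$, together with the fact that a genus-one curve over a finite field has a rational point (Hasse--Weil, or Lang's theorem; note that Chevalley--Warning does not apply to a degree-$4$ plane model). This buys a uniform treatment of all five types and avoids the long explicit birational computation, at the price of importing nontrivial external theory; the paper's proof is longer but self-contained, and in the $\eta_L\geq 1$ case entirely elementary.

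One caution about your final paragraph: the proposed ``concrete verification'' of the no-twist claim fails in precisely the case where it is needed. The normal form \eqref{eq:g_5_z} is reached by a $g\in GL_2(F)$ where $F$ is an extension of $\bF_q$ over which $\varphi_L$ has a linear factor; when $\varphi_L\in\mF_2'\cup\mF_4'$ (i.e.\ $\eta_L=0$) no such $g$ exists over $\bF_q$, so exhibiting a rational point, or triviality of the twist, on the normal form over $F$ says nothing about square classes over $\bF_q$, and ``transporting back'' by orbit invariance is unavailable because the normalization is not a $PGL_2(q)$-equivalence (transformations over a proper extension do not preserve $\nu$). So either rely squarely on the Jacobian-of-a-quartic theorem over the ground field -- which is intrinsic, requires no rational point on the quartic model, and makes the twist worry moot -- or do what the paper does and build the isomorphism explicitly from a rational point furnished by the Hasse bound. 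With that repair (or with the classical citation made the sole load-bearing step, as in your ``safest route''), your proof is complete.
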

\bep
We recall that for a non-zero quartic form $f \in V_4$ and $g =\bbsm a & b \\ c & d \besm \in GL_2(q)$, we have $g \cdot f = \det(g)^{-4} f(dX-bY, aY-cX)$. Therefore, $g^{-1}$ carries the set $\{(s,t) \in PG(1,q) \colon f(s,t) \text{ is a non-zero square}\}$ bijectively to the set  $\{(s,t) \in PG(1,q) \colon (g \cdot f)(s,t) \text{ is a non-zero square}\}$. In particular, both sets have the same size. 
Applying this to the quartic form $D_L$, we conclude  that for any $g \in GL_2(q)$, we have $\nu(L)=\#\{(x,y) \in PG(1,q) :   (g \cdot D_L)(x,y) \text{ is a  non-zero square in $\bF_q$}\}$.\\

We also recall that for any field $F \supset \bF_q$ over which a quartic form $f$ has a linear factor, there exists $g \in GL_2(F)$
 such that $g \cdot f = \det(g)^{-2} X(-4Y^3 +3I(f)  YX^2 -J(f) X^3)$.
In case $\varphi_L \in \mF_i$ for $i \in \{1, 2, 4\}$, we recall that $D_L$ has $\eta_L = i$ linear factors over $\bF_q$. 
Therefore,  there exists $g \in GL_2(q)$ such that 
\[ g \cdot D_L = \det(g)^{-2} X(-4Y^3 +3I(D_L)  YX^2 -J(D_L) X^3).\]
Since $(g \cdot D_L)(x,y)=0$ for $(x,y) =(0,1)$ we have in terms of $s = -y/x$
\[ \nu_L = \#\{s \in \bF_q \colon 4 s^3 -g_2(L) s - g_3(L) \text{ is a non-zero square in $\bF_q$.}\}\]
We also note that $4 s^3 -g_2(L) s - g_3(L)$ has $(\eta(L)-1)$ roots in $\bF_q$ because $X(-4Y^3 +3I(D_L)  YX^2 -J(D_L) X^3)$ has $\eta_L$ linear factors over $\bF_q$.
The number $\# E_L(\bF_q)$ of the points in $PG(2,q)$ of the elliptic curve $E_L$ elliptic curve is clearly 
\[2 \nu_L+ (\eta_L-1) + 1,\]
where the contribution $1$ comes from point at infinity, the term $(\eta_L-1)$ comes from the roots in $\bF_q$ of $(4 s^3 -g_2(L) s - g_3(L))$. Thus, we have shown that 
\[ \nu_L = (\#E_L(\bF_q) - \eta_L)/2.\]

We now turn to the case when $\varphi_L$ (and hence $D_L$) does not have a linear factor over $\bF_q$, i.e. $D_L \in \mF_2' \cup \mF_4'$. 
We write 
\[D_L(X,Y) = a_0 Y^4
-4a_1Y^3X+6a_2Y^2X^2-4a_3YX^3 +a_4X^4.
\]
We consider  the  curve in $\mathbb  P^2$
\[ X^2W^2= D_L(X,Y).\]
Since  $D_L$ has no repeated factors, the only singularity of this curve is the point $(X,Y,W)=(0,0,1)$, which is a cusp singularity. We define $\mE_L$ to be a non-singular model of this  curve. The curve $\mE_L$ has genus $1$, and the singular point of the original curve corresponds to a pair of points of $\mE_L$ around which a  model of $\mE_L$ is:
\[  (\frac{WX}{Y^2})^2 = a_0 -4  a_1 (\frac{X}{Y}) + 6 a_2 (\frac{X}{Y})^2 -4 a_3(\frac{X}{Y})^3 + a_4 (\frac{X}{Y})^4. \]
The above mentioned pair of points is $(\tfrac{WX}{Y^2},\tfrac{X}{Y}) = (\pm \sqrt{a_0},0)$.
These $2$ points are defined over $\bF_q$ if and only if  $a_0=D_L(0,1)$ is a  square  in $\bF_q$. We also note the remaining points of $\mE_L(\bF_q)$ are 
\[ \{ (X,Y,W)=(1,y, \pm \sqrt{D_L(1,y)})   \colon D_L(1,y) \text{ is a non-zero square in $\bF_q$}\}.\]
(we recall that $D_L(1,y) \neq 0$ for all $y \in \bF_q)$.
Therefore,
\[ \nu_L=\# \mE_L(\bF_q)/2.\]
By the Hasse bound, $\# \mE_L(\bF_q) \geq (\sqrt q -1)^2 >0$ and hence $\nu_L >0$. By the definition of $\nu_L$, we conclude that there is a point $(x_0,y_0) \in PG(1,q)$ such that $D_L(x_0,y_0)$ is a non-zero square in $\bF_q$. Let $g \in SL_2(q)$ such that $g (x_0,y_0)=(0,1)$. Since $\nu_L$ is unaffected if we replace $D_L$ by $g \cdot D_L$,
we may assume $(x_0,y_0)=(0,1)$ or equivalently $D_L(0,1)=a_0$ is a non-zero square in $\bF_q$. 
Again replacing $D_L$ by $g \cdot D_L$ where $g=\bbsm 1 & 0\\-a_1/a_0 & 1\besm \in SL_2(q)$, we may assume
\[g \cdot D_L(X,Y) = a_0 Y^4  + 6 a_2 X^2Y^2 -4 a_3X^3Y + a_4 X^4.  \]
Since $g \in SL_2(q)$,  $I(g \cdot D_L) = \det(g)^{-4} I(D_L) = I(D_L)$ and $J(g \cdot D_L) = \det(g)^{-6} J(D_L) = J(D_L)$. Therefore,
\begin{align*}  g_2(L)&=3 I(D_L)=(a_0 a_4+3a_2^2),\\
g_3(L)& = J(D_L) =a_0a_2a_4-a_2^3-a_0a_3^2. \end{align*}
We take  $\mE_L$ to be a non-singular model of the projective plane curve
\[X^2W^2= a_0 Y^4  + 6 a_2 X^2Y^2 -4 a_3X^3Y + a_4 X^4.\]
We show that the curves $\mE_L$ and $E_L$ are isomorphic. Our proof closely follows  Theorem 2 in \S 10 of  Mordell's book \cite{Mordell}. 
Let $P_+$ and $P_-$ denote the points of $\mE_L$ with coordinates $(XW/Y^2, X/Y)$ being $ (\sqrt{a_0},0)$ and $(-\sqrt{a_0},0)$ respectively. Let $P_\infty$ denote the point at infinity of $E_L$ with coordinates $(1/T,S/T)=(0,0)$.  We define a map 
$\psi: \mE_L \to E_L$ as follows:
It is useful to rewrite the two curves (away from the points at infinity) as:
\begin{align} \label{eq:E_alt}
\nonumber \mE_L&: (Y^2+\tfrac{3a_2}{a_0}-\tfrac{W}{\sqrt a_0})(Y^2+\tfrac{3a_2}{a_0}+\tfrac{W}{\sqrt a_0})= \tfrac{9a_2^2}{a_0^2} + \tfrac{4 a_3}{a_0} Y -\tfrac{a_4}{a_0} \\
E_L&: (T - \sqrt a_0 a_3) (T + \sqrt a_0 a_3)=(S+a_2)( (2S-a_2)^2 - a_0a_4).
\end{align}
Let $Q_{\pm}$ denote the points $(S,T)=(-a_2, \pm \sqrt a_0 a_3)$  of $E_L$. (If $a_3=0$, $Q_{\pm}$ is a single point.)\\

The isomorphism $\psi: \mE_L \to E_L$ that we construct below satisfies $\psi(P_-)=Q_+$, $\psi(P_+)= P_\infty$  and $\psi$
maps $\mE_L \setminus \{P_+, P_-\}$ bijectively to $E_L \setminus \{P_{\infty}, Q_+\}$. First, we define $\psi:\mE_L \setminus \{P_-, P_+\} \to E_L \setminus\{ P_\infty, Q_+\}$ by 
\[ (S+a_2)=\tfrac{a_0}{2} (Y^2+\tfrac{3a_2}{a_0}+\tfrac{W}{\sqrt a_0}), \quad (T+\sqrt a_0 a_3)=a_0^{3/2} Y (Y^2+\tfrac{3a_2}{a_0}+\tfrac{W}{\sqrt a_0}). \]
It is readily checked that the indicated point does lie on $E_L$: plugging in the prescribed values of  $(S+a_2)$ and $(T+\sqrt a_0 a_3)$ in the equation \eqref{eq:E_alt} of $E_L$, and cancelling the common factor of $a_0^2(Y^2+\tfrac{3a_2}{a_0}+\tfrac{W}{\sqrt a_0})$, we get
\[ a_0Y^2(Y^2+\tfrac{3a_2}{a_0}+\tfrac{W}{\sqrt a_0}) - 2 a_3 Y = \tfrac{a_0}{2} (Y^2+\tfrac{W}{\sqrt a_0})^2- \tfrac{a_4}{2},\]
which is true upon using the equation $W^2=a_0Y^4+6a_2 Y^2-4 a_3 Y +a_4$. Next, we show that $\psi(P_-) = Q_+$.  
Near $P_-: (W/Y^2, 1/Y) = (-\sqrt a_0, 0)$, we have 
\begin{align*}
(Y^2+\tfrac{3a_2}{a_0}+\tfrac{W}{\sqrt a_0})_{|P_-} &=\left( \frac{ \tfrac{9a_2^2}{a_0^2Y^2} + \tfrac{4 a_3}{a_0 Y}  -\tfrac{a_4}{a_0Y^2} }{1+\tfrac{3a_2}{a_0 Y^2}-\tfrac{W}{\sqrt a_0 Y^2}}\right)_{|P_-}=\frac{0}{2}=0, \\
Y(Y^2+\tfrac{3a_2}{a_0}+\tfrac{W}{\sqrt a_0})_{|P_-} &=\left( \frac{ \tfrac{9a_2^2}{a_0^2Y} + \tfrac{4 a_3}{a_0}  -\tfrac{a_4}{a_0Y} }{1+\tfrac{3a_2}{a_0 Y^2}-\tfrac{W}{\sqrt a_0 Y^2}}\right)_{|P_-}=\frac{4a_3/a_0}{2}=\frac{2a_3}{a_0}.
\end{align*}
Therefore, $\psi(P_-)$ is the point $Q_+: (S,T) = (-a_2, \sqrt a_0 a_3)$.\\

We now show that $\psi(P_+)= P_\infty$. Near $P_+: (W/Y^2,1/Y)=(\sqrt a_0, 0)$, we can express $\psi$ in terms of the coordinates $(1/T,S/T)$ near $P_{\infty}$ as:
\[ (\tfrac{1}{T},\tfrac{S}{T})_{|P_+}=
\left(\frac{\tfrac{1}{Y^3}}{a_0^{3/2}+\tfrac{ a_0 W}{ Y^2}+\tfrac{2a_3 \sqrt a_0}{Y^3}}, \frac{\tfrac{1}{2Y}(\tfrac{a_2}{Y^2} + a_0 + \tfrac{\sqrt a_0 W}{Y^2})}{a_0^{3/2} +\tfrac{a_0 W}{Y^2}+ \tfrac{2 a_2 \sqrt a_0}{Y^2}}\right)_{|{P_+}}=(0,0).\]
Thus, $\psi(P_+)=P_\infty$.\\

We define a map $\psi':E_L \to \mE_L$  as follows: First we define $\psi': E_L \setminus\{P_\infty, Q_+\} \to \mE_L \setminus\{P_+, P_-\}$ by 
\[ 
(Y,W)=\tfrac{1}{\sqrt a_0} \left( \frac{T +\sqrt{a_0} a_3}{2(S+a_2)},  2S-a_2 -  (\tfrac{T+ \sqrt{a_0} a_3}{2(S+a_2)})^2 \right).  \]
 It is readily checked that this point  lies on $\mE_L$. Near $P_\infty$, we express $\psi'$ in terms of the coordinates $(1/T,S/T)$ around $P_\infty$ and the coordinates 
$(W/Y^2,1/Y)$ around $P_+$ by $(1/T,S/T) \mapsto$
\[ (\tfrac{W}{Y^2},\tfrac{1}{Y})=\sqrt a_0 \left(-1 + 2  \frac{(2S-a_2)^2 (S+a_2)}{T^2}  \frac{(1+\tfrac{a_2}{S})}{(1-\tfrac{a_2}{2S})(1+ \tfrac{\sqrt a_0 a_3}{T})^2}, 
 \frac{2(\tfrac{S}{T}+\tfrac{a_2}{T})}{1 +\tfrac{\sqrt{a_0} a_3}{T}} \right). \]
It is clear that $\tfrac{1}{Y}_{|P_\infty}=0$.
Writing the equation of $E_L$ as
\[ \tfrac{1}{S}=4 (\tfrac{S}{T})^2 -g_2 \tfrac{S}{T} \tfrac{1}{T^2} -g_3 \tfrac{1}{T^3},\]
we see that $\tfrac{1}{S}_{|P_\infty}=0$. Therefore, \[ \tfrac{W}{\sqrt a_0 Y^2}_{|P_\infty}  = -1 + 2  \frac{(2S-a_2)^2 (S+a_2)}{T^2}_{|P_\infty}. \]
Again writing the equation of $E_L$ as
\[1 - \tfrac{ a_0 a_3^2}{T^2} + a_0a_4 (\tfrac{S}{T}+\tfrac{a_2}{T}) \tfrac{1}{T}=\frac{(S+a_2) (2S-a_2)^2}{T^2}, \]
we see that $\tfrac{(S+a_2) (2S-a_2)^2}{T^2}_{|P_\infty}=1$. Therefore, $\tfrac{W}{\sqrt a_0 Y^2}_{|P_\infty}=1$. This shows that 
$\psi'(P_\infty)=P_+$.\\

Near $Q_+$, we express $\psi'$ in terms of the coordinates 
$(W/Y^2,1/Y)$ around $P_-$ by
\[ (\tfrac{W}{Y^2},\tfrac{1}{Y})= (\tfrac{4 \sqrt a_0 (2S-a_2)(S+a_2)^2}{(T + \sqrt a_0 a_3)^2}-\sqrt a_0,
\tfrac{2 \sqrt a_0 (S+a_2)}{T+ \sqrt a_0 a_3}).\]
Evaluating this at $(S,T)=(-a_2, \sqrt a_0 a_3)$ we get $(\tfrac{W}{Y^2},\tfrac{1}{Y})=(-\sqrt a_0,0)$ which shows that $\psi'(Q_+)=P_-$.\\

It is readily checked that $\psi' \circ \psi$ is the identity map on $\mE_L$ and $\psi \circ \psi'$ is the identity map on $E_L$, and hence $\mE_L$ and $E_L$ are isomorphic over $\bF_q$. We conclude that $\nu_L=\#\mE_L(\bF_q)/2 =\#E_L(\bF_q)/2$.
\eep
\begin{rem} \label{rem2}
    For nonzero $\lambda \in \bF_q$, multiplying \eqref{eq:E_L_def} by $\lambda^6$ gives the equation 
\[ (\lambda^3T) ^2=4 (\lambda^2 S)^3  - \lambda^4 g_2(L) (\lambda^2 S) - \lambda^6 g_3(L),\] which shows that the elliptic curve  obtained by replacing $(g_2(L), g_3(L)$ in the equation of $E_L$ by $(\lambda^4 g_2(L), \lambda^6 g_3(L))$ is isomorphic to $E_L$.
Using this in \eqref{eq:IJD_L}-\eqref{eq:g2g3def} with $\lambda = \det(g)^3$ for $g \in GL_2(q)$,  we see that the elliptic curve $E_L$ and $E_{g \cdot L}$ are isomorphic over $\bF_q$. In particular, $\# E_L(\bF_q)$ only depends on the $G$-orbit of $L$.
\end{rem}

\begin{rem} \label{rem1} For later use, we show that $\#E_L(\bF_q)$ is divisible by $3$. The equation of the elliptic curve $E_L$ can  be rewritten as
\[E_L:  T^2 - \tfrac{1}{4} (J_\varphi-\sqrt{I_\varphi}^3)^2
 = (S- \tfrac{3 I_\varphi}{4})(4 S^2+ 3 I_\varphi S - \tfrac{3}{2} I_\varphi^2 - 3 J_\varphi \sqrt I_\varphi),
\]
which shows that the points 
\[(S,T) = \left(\tfrac{3I_\varphi}{4}, \pm \tfrac{J_\varphi - \sqrt{I_\varphi}^3}{2}\right), \]
lie on $E_L$. It is easy to verify that these points are 
flex points of $E_L$, and hence $3$-torsion points of the group $E_L(\bF_q)$. Therefore, $\#E_L(\bF_q)$ is divisible by $3$.    
\end{rem}

 \section{Solution of Problem \ref{prob1}} \label{S6}
We first discuss the solution of Problem \ref{prob1} for the  ten $G$-orbits of non-generic lines. There are ten $G$-orbits of the non-generic lines of $PG(3,q)$,  the description of which can be found in \cite{BPS,DMP1,GL}. In the next lemma, for our purposes, we give a list of those orbits with their associated binary quartic forms. The symbol $\ep$ denotes a fixed quadratic non-residue of $\bF_q$.

\begin{lem} \label{nongeneric}\cite[Lemma 6.1]{KPP}
The set of non-generic lines decompose into the following ten orbits:
\begin{enumerate}
\item The orbit $\fO_2 = \pi^{-1}(G \cdot X^4)$  consists of the tangent lines to $C$, and is represented by $(z_0,\dots,z_5)=(0,0,0,0,1,0)$.
\item the orbit $\fO_4 = \pi^{-1}(G \cdot X^3Y)$ consists of the non-tangent unisecants contained in osculating planes of $C$, and is represented by $(z_0,\dots,z_5)=(0,0,0,1,0,0)$.

\item orbits $\fO_1$ and $\fO_1^{\perp}$ from $\pi^{-1}(G \cdot  X^2Y^2)$ of size $(q^2+q)/2$ each  and consisting of the secant lines, and the real axes of $C$, respectively. They
 are represented by $(z_0,\dots,z_5)=(0,0,1,0,0,1)$ and $(0,0,1,0,0,-1)$ respectively.
 \item orbits  $\fO_3$ and $\fO_3^{\perp}$ from $\pi^{-1}(G \cdot (X^2-\epsilon Y^2)^2$  of size $(q^2+q)/2$ each  and consisting of the imaginary secant lines, and the imaginary axes of $C$ respectively. They are represented by $(z_0,\dots,z_5)=(\ep^2,0,-\ep/3,0,1,2\ep/3)$ and $(\ep^2,0,-\ep/3,0,1,-2\ep/3)$ respectively.
\item The class $\fO_5$ of  unisecants not lying in osculating planes consists of the two orbits $\fO_{51}$ and $\fO_{52}$ below.  The class  of (non-axes) external lines in osculating planes consists of the two orbits $\fO_{51}^{\perp}$ and $\fO_{52}^{\perp}$ below.
\begin{enumerate}
\item orbits  $\fO_{51}$  and $\fO_{51}^{\perp}$ from $\pi^{-1}(G \cdot X^2(X^2-\epsilon Y^2))$ of size $(q^3-q)/2$ each. They
 are represented by $(z_0,\dots,z_5)=(0,0,\ep,0,-6,\ep)$ and $(0,0,\ep,0,-6,-\ep)$ respectively.
\item orbits  $\fO_{52}$  and $\fO_{52}^{\perp}$ from $\pi^{-1}(G \cdot X^2Y(Y-X))$  of size $(q^3-q)/2$ each. They
 are represented by $(z_0,\dots,z_5)=(0,0,2,3,0,2)$ and $(0,0,2,3,0,-2)$ respectively.
\end{enumerate}

\end{enumerate}
\end{lem}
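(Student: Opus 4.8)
The plan is to reduce Lemma~\ref{nongeneric} to the classification of $G$-orbits of binary quartic forms with vanishing discriminant, and then to resolve each such orbit into orbits of lines by means of the $2$-to-$1$ cover $\pi\colon\mQ\to\mF^+$ of Theorem~\ref{lem_pi}. By the Lemma of \S\ref{S2}, a pencil $L$ with $\pi(L)=\varphi_L$ is non-generic exactly when $\Delta(\varphi_L)=0$, and that same Lemma supplies the two $G$-invariant dichotomies I would use to separate $L$ from $L^{\perp}$: the pencil $L$ meets $C$ iff $J(\varphi_L)=-z_5(L)^3$, and $L$ lies in an osculating plane of $C$ iff $J(\varphi_L)=z_5(L)^3$.

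First I would classify the $G$-orbits of quartic forms $\varphi$ with $\Delta(\varphi)=0$. Such a $\varphi$ has a repeated linear factor over $\overline{\bF}_q$, and Galois-invariance of the whole form leaves only six possible factorization patterns, namely $X^4$, $X^3Y$, $X^2Y^2$, $(X^2-\ep Y^2)^2$, $X^2Y(Y-X)$ and $X^2(X^2-\ep Y^2)$, with $\ep$ a non-square. Using the sharp $3$-transitivity of $PGL_2(q)$ on $PG(1,q)$ to normalize the rational marked roots, its transitivity on conjugate pairs of points of the projective line over $\bF_{q^2}$, and the description of the point-stabilizer (a Borel subgroup) acting on the remaining roots, one checks that each pattern is a single $G$-orbit, represented by the indicated form. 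A direct computation of $I(\varphi)$ and $J(\varphi)$ from \eqref{eq:I}--\eqref{eq:J} shows that $I(\varphi)=0$ for $X^4$ and $X^3Y$, while $I(\varphi)$ is a non-zero square for the other four forms; thus all six orbits actually occur on $\mQ$.

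Next I would lift to $\mQ$. Over a fibre of $\pi$ one has $z_5^2=I(\varphi)$, so $X^4$ and $X^3Y$ each lift to a single orbit of lines, $\fO_2=\pi^{-1}(G\cdot X^4)$ and $\fO_4=\pi^{-1}(G\cdot X^3Y)$; reading off the pencils from the Plücker coordinates via \eqref{eq:newcoords} identifies $\fO_2$ with the tangent lines of $C$ and $\fO_4$ with the non-tangent unisecants inside osculating planes. For each of the remaining four orbits the fibre $\pi^{-1}$ consists of the two pairs $(\varphi,\pm\sqrt{I(\varphi)})$; comparing $J(\varphi)$ with $+z_5^3$ and with $-z_5^3$ through the two dichotomies above shows that for one sign the line meets $C$ and for the other it lies in an osculating plane, so the two lifts are genuinely distinct orbits $\fO,\fO^{\perp}$. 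This simultaneously produces the geometric descriptions: $G\cdot X^2Y^2$ gives the real chords and real axes $\fO_1,\fO_1^{\perp}$; $G\cdot(X^2-\ep Y^2)^2$ the imaginary chords and imaginary axes $\fO_3,\fO_3^{\perp}$; and $G\cdot X^2(X^2-\ep Y^2)$, $G\cdot X^2Y(Y-X)$ the two families $\fO_{51},\fO_{52}$ of unisecants not lying in osculating planes, together with the polar duals $\fO_{51}^{\perp},\fO_{52}^{\perp}$ of external lines in osculating planes, the two families being separated by whether the non-repeated part of $\varphi_L$ is irreducible over $\bF_q$ or splits. The orbit sizes then follow from orbit--stabilizer after computing, for each of the ten representatives, the subgroup of $G$ fixing it; for instance $\fO_2$ has size $q+1$ since the stabilizer in $G$ of $X^3$ is a Borel subgroup.

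The step I expect to be the main obstacle is not the algebra but the bookkeeping in the final two paragraphs: matching the ten algebraic orbits with Hirschfeld's eight geometric classes of non-generic lines (in particular distinguishing a real axis from the other lines of an osculating plane, which means tracking the polar duality $L\leftrightarrow L^{\perp}$ through the isomorphisms $\Phi$ and $\Psi$ of \S\ref{S2}), and computing the ten stabilizers cleanly rather than by brute force. Verifying that the listed coordinate vectors really represent the claimed quartic forms, and evaluating $I$ and $J$ in each case, are routine.
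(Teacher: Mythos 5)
The paper itself does not prove this lemma: it is imported from \cite[Lemma 6.1]{KPP}, so there is no in-paper proof to compare against. Your reconstruction is the natural one given the machinery the paper sets up in \S\ref{S2}--\S\ref{S3}, and it is essentially correct: the six factorization patterns do exhaust the quartics with $\Delta=0$ (a repeated irrational root forces its conjugate to be repeated as well, which rules out anything else), each pattern is a single projective $G$-orbit by the transitivity arguments you indicate, and the dichotomy $J(\varphi_L)=\pm z_5^3$ from the lemma in \S\ref{S2} correctly separates the lifts that meet $C$ from those lying in osculating planes, which also yields the geometric labels and the matching with the listed coordinate representatives. Two points should be made explicit. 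First, the dichotomy only shows that $\pi^{-1}(G\cdot\varphi)$ contains \emph{at least} two orbits when $I(\varphi)\neq 0$; to conclude it is \emph{exactly} two you need the counting step that every $G$-orbit inside $\pi^{-1}(G\cdot\varphi)$ surjects onto $G\cdot\varphi$ by equivariance, hence has size at least $|G\cdot\varphi|$, while the preimage has size exactly $2|G\cdot\varphi|$; for $X^4$ and $X^3Y$, where $I=0$, the fibre is a single point and $\pi$ restricts to an equivariant bijection, so each preimage is one orbit, as you say. Second, when you carry out the orbit--stabilizer computations, note that the stabilizer of $(X^2-\ep Y^2)^2$ is dihedral of order $2(q+1)$, so your method will give $|\fO_3|=|\fO_3^{\perp}|=(q^2-q)/2$ rather than the $(q^2+q)/2$ printed in the statement; the printed figure is a misprint (the imaginary chords and imaginary axes of $C$ number $q(q-1)/2$), whereas the remaining printed sizes $(q^2+q)/2$ for $\fO_1,\fO_1^{\perp}$ and $(q^3-q)/2$ for $\fO_{51},\fO_{52}$ and their duals do come out of the stabilizers of orders $2(q-1)$ and $2$ exactly as your computation predicts. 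With those two additions your outline is a complete and faithful proof of the lemma.
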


The solution of Problem \ref{prob1} for these 10 orbits found by Davydov, Marcugini, and Pambianco \cite{DMP3} and G\"unay and Lavrauw in \cite{GL} is summarized in the Proposition below. We now give a quick proof of these results.

\begin{prop} \label{nongenericresult3}
Let $L$ be a non-generic line of $PG(3,q)$ and let $S$ denote the set of points of $L$. For $L\in \{\fO_1,\fO_1^\perp,\fO_2,\fO_3,\fO_3^\perp,\fO_4,\fO_{51},\fO_{51}^\perp, \fO_{52},\fO_{52}^\perp\}$, the numbers $|S\cap O_i|$, for $i=1,\dots,5$ can be given by the following table:
    \begin{table}[h] 
\begin{tabular}{c| *{5}{c}}
Orbit   & $|O_1\cap S|$ & $|O_2\cap S|$ & $|O_3\cap S|$ & $|O_4\cap S|$ & $|O_5\cap S|$    \\
&&&&&\\  \hline \\
 $\fO_1$   & $2$  &$0$ &  $\tfrac{(\mu+1)(q-1)}{6}$  & $\tfrac{(1-\mu)(q-1)}{2}$  &  $\tfrac{(\mu+1)(q-1)}{3}$     \\ 
 $\fO_1^{\perp}$   & $0$ & $2$  & $(q-1)$  &$0$  &$0$       \\ 
 $\fO_2$   & $1$ &$q$  &$0$   &$0$  &$0$       \\ 
 $\fO_3$   & $0$ & $0$ & $\tfrac{(1-\mu)(q+1)}{6}$  &$\tfrac{(1+\mu)(q+1)}{2}$  &$\tfrac{(1-\mu)(q+1)}{3}$       \\ 
 $\fO_3^{\perp}$   &$0$  &$0$  & $0$  &$(q+1)$  &$0$       \\ 
 $\fO_4$   &$1$  &$1$  & $\tfrac{q-1}{2}$   & $\tfrac{q-1}{2}$  & $0$       \\ 
 $\fO_{51}$   & $1$  & $0$ & $\tfrac{q-\mu}{6}$  &$\tfrac{q+\mu}{2}$  & $\tfrac{q-\mu}{3}$      \\ 
 $\fO_{51}^{\perp}$   & $0$  & $1$ &$\tfrac{q-1}{2}$   & $\tfrac{q+1}{2}$ &$0$       \\ 
 $\fO_{52}$   &$1$  &$2$  &$\tfrac{q-\mu-6}{6}$   & $\tfrac{q+\mu-2}{2}$ & $\tfrac{q-\mu}{3}$      \\ 
 $\fO_{52}^{\perp}$   & $0$ & $3$ &   $\tfrac{q-3}{2}$ &  $\tfrac{q-1}{2}$ &$0$       \\ 
  \\ 
\end{tabular}
\end{table}
\end{prop}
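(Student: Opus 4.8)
The plan is to prove the table for the ten non-generic orbits by essentially the same mechanism developed in \S \ref{S4}--\S \ref{S5} for generic lines, specialized to each of the explicit representatives listed in Lemma \ref{nongeneric}. For a non-generic line $L$ which is \emph{not} contained in any osculating plane of $C$, Corollary \ref{Psi_deg} guarantees that the bilinear form $\Psi_L$ is still non-degenerate, so Proposition \ref{hLXY}, Lemma \ref{h_lem} and Definition \ref{h_def} all apply verbatim: one computes the matrix $M_z$ from the given coordinates $(z_0,\dots,z_5)$, forms the quadratic form $h^L_{(s,t)}(X,Y) = \bbsm X^2 & XY & Y^2 \besm M_z \bbsm s^2 \\ st \\ t^2 \besm$, and then the discriminant quartic $D_L(X,Y)$ from \eqref{eq:D_L_long}. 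The count of $|S\cap O_i|$ then follows from Lemma \ref{h_lem} once one knows, for each value of $(s,t) \in PG(1,q)$, the factorization type of $(Xt-Ys)h^L_{(s,t)}(X,Y)$ over $\bF_q$. Because the representatives are so explicit, $D_L(X,Y)$ and $h^L_{(s,t)}$ will be simple polynomials whose factorization can be read off directly, typically reducing to counting squares and non-squares among values of a low-degree polynomial, i.e.\ to elementary character-sum identities such as $\sum_{x\in\bF_q}\left(\tfrac{x^2-\ep}{q}\right)=-1$, or to the number of points of a degenerate conic.

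The key steps, in order, are as follows. First I would dispose of the three "special" orbits $\fO_2, \fO_4, \fO_1$ by hand: $\fO_2$ consists of tangent lines $(Xt-Ys)^3 \in L \subset O_{(s,t)}$ meeting $C$ at one point, so $L$ contains $X^3$ and exactly one further point of each other type is visible from the pencil $z_4$-line; $\fO_4$ is the non-tangent unisecant in an osculating plane, where $\Psi_L$ \emph{is} degenerate and one argues directly inside the fixed osculating plane; $\fO_1$ (the secant through two points of $C(\bF_q)$) contains $X^3, Y^3$ and its remaining $q-1$ points are $X^3 + \lambda Y^3$ with $\lambda \ne 0$, whose orbit type is governed by whether $\lambda$ is a cube, recovering the $\tfrac{(\mu+1)(q-1)}{3}$ etc.\ via the standard count of cubes in $\bF_q^\times$ (which depends on $q \bmod 3 = \mu$). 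Second, for each of the remaining orbits $\fO_3, \fO_{51}, \fO_{52}$ and their polar duals I would substitute the representative into $M_z$, compute $D_L$ and $h^L_{(s,t)}$, and carry out the square/non-square bookkeeping; the duals $\fO_1^\perp, \fO_3^\perp, \fO_{51}^\perp, \fO_{52}^\perp$ all lie in osculating planes, so $|S\cap O_1| = |S \cap O_5| = 0$ automatically and only the split between $O_3$ and $O_4$ (and the finitely many $O_2$ points) needs to be counted, again a conic/character-sum count. Third, I would cross-check every row against the orbit sizes and against the known global incidence identities $\sum_{\alpha} |\fO_\alpha|\,|S_\alpha \cap O_i| = (q+1)\,|O_i|\cdot(\text{number of lines through a point})/(\dots)$, or more simply against the sanity constraint $\sum_i |S\cap O_i| = q+1$.

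The main obstacle I expect is not conceptual but the handling of the orbits $\fO_{52}, \fO_{52}^\perp$, where the quartic $D_L$ (coming from $X^2Y(Y-X)$) has linear factors over $\bF_q$ meeting some of the special points $(s,t)$ where $h^L_{(s,t)}$ degenerates, so the naive "count squares" argument has to be corrected by the exact number $\eta_L$ of such coincidences and by checking whether the special points themselves contribute to $O_2$ versus $O_4$; this is exactly the $-6$ and the $\pm\mu$ shifts appearing in the $\fO_{52}$ row. The dependence of several entries on $\mu = q \bmod 3$ means one must be careful that the count of cubes (for $O_3$ versus $O_5$) and the count of squares (for $O_3$ versus $O_4$) are kept separate; conflating them is the easiest way to get a wrong row. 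Once those coincidences are accounted for, each row reduces to a short finite-field computation, so the proof is a matter of organizing ten short cases rather than proving anything deep — indeed, the statement is already in the literature (\cite{DMP3,GL}) and the point here is only to give the "quick proof" promised in the text, uniformly within the $D_L$ framework.
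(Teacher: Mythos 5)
Your proposal is correct and takes essentially the same route as the paper: the orbits lying in osculating planes ($\fO_2,\fO_4,\fO_1^\perp,\fO_3^\perp,\fO_{51}^\perp,\fO_{52}^\perp$) are settled by direct inspection of explicit pencils, while $\fO_1,\fO_3,\fO_{51},\fO_{52}$ are handled through $h^L_{(s,t)}$ and Lemma \ref{h_lem} with square/non-square (and the $\fO_{52}$ coincidence) bookkeeping, exactly as in the paper. The only minor deviation is your treatment of $\fO_1$ by cube-counting on $X^3+\lambda Y^3$ instead of via $h^L_{(s,t)}$; that works too, provided you note that for $q\equiv -1 \bmod 3$ every $\lambda$ is a cube yet the quadratic cofactor is irreducible, so those points land in $O_4$ rather than $O_3$.
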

\bep
Of the $10$ orbits of non-generic lines, the orbits of lines contained in osculating planes of $C$ are: \\

(i)  Let  $L\in \fO_2$ (tangent lines of $C$) represented by the pencil $t \mapsto X^2(X+tY)$ which has $1$ point $t=\infty$ in $O_1$ and remaining $q$ points in $O_2$.\\

(ii) Let $L\in\fO_4$ (non-tangent unisecants in osculating planes of $C$) represented by the pencil $t \mapsto X(Y^2-tX^2)$ which has one point $t=\infty$ in $O_1$, one point $t=0$ in $O_2$, $(q-1)/2$ points each in $O_3$ and $O_4$.\\

(iii) Let $L\in\fO_1^\perp$ (real axes of $C$) represented by the pencil $t \mapsto XY(Y+tX)$ which has two  points $t=0,\infty$ in $O_2$ and $(q-1)$  points in $O_3$.\\

(iv) Let $L\in\fO_3^\perp$ (imaginary axes of $C$) represented by the pencil $t \mapsto (X^2-\ep Y^2)(Y+tX)$ which has all $(q+1)$  points  in $O_4$.

(v)-(vi) (external lines  in osculating planes)  \\

(v) Let $L\in \fO_{51}^\perp$ represented by the pencil $t \mapsto XY(Y+tX)$ which has two  points $t=0,\infty$ in $O_2$ and $(q-1)$  points in $O_3$.\\

(vi) Let $L\in \fO_{52}^\perp$ represented by the pencil $t \mapsto X((X+Y)^2-tY^2)$ which has three  points $t=0,1,\infty$ in $O_2$, $(q-3)/2$  points in $O_3$, and $(q-1)/2$ points in $O_4$.\\

For the remaining $4$ orbits $\fO_1, \fO_3,\fO_{51}, \fO_{52}$, we will use Proposition \ref{hLXY} and Lemma \ref{h_lem}. For $L$ representing such an orbit, we  need to determine the sizes of the sets $a_1(L), a_3(L), a_{3,1}(L), a_{3,3}(L)$ as defined in the Lemma \ref{h_lem}.\\

(vii) Let $L\in \fO_1$ (real secants of $C$) with associated binary quartic form $\varphi_L(X,Y)=X^2Y^2$. Here 
\begin{align*}
    \varphi_{s,t}(X,Y)& = (Xt-Ys) h^L_{(s,t)}(X,Y)\\
    &=(Xt-Ys) \bbsm X^2 &  XY & Y^2 \besm \bbsm 0 & 0 & 2 \\ 0 & 2 & 0 \\ 2 & 0 & 0 \besm \bbsm s^2 \\ st \\ t^2 \besm \\
    &=2(Xt-Ys) (Ys^2+XYst+Xt^2) \\
    &= 2(Xt-Ys) (Xt-Y\omega s)(Xt-Y \omega^2s),
\end{align*}
where $\omega$ is a primitive cube root of unity. Here $a_1(L)=\{(0,1),(1,0)\}$.
If $q \equiv 1 \mod 3$ then $\omega \in \bF_q$ and hence there are $a_{3,3}(L)=\bF_q^{\times}$ whereas $a_{3,1}(L)= a_2(L) = \emptyset$.   If $q \equiv 2 \mod 3$ then $\omega \notin \bF_q$ and hence  $a_{3,1}(L)=\bF_q^{\times}$ whereas $a_{3,3}(L)= a_{2}(L)= \emptyset$. Therefore, $|S\cap O_1|=2$, $|S\cap O_2|=0$, $|S\cap O_3|=\tfrac{(\mu+1)(q-1)}{6}$, $|S\cap O_4|=\tfrac{(1-\mu)(q-1)}{2}$, and $|S\cap O_5|=\tfrac{(\mu+1)(q-1)}{3}$.\\

(viii) Let $L\in \fO_3$ (imaginary secants of $C$) with associated binary quartic form $\varphi_L(X,Y)=(X^2-\ep Y^2)^2$. Here 
\begin{align*}
    \varphi_{s,t}(X,Y)&= (Xt-Ys) h^L_{(s,t)}(X,Y)\\
    &=(Xt-Ys) \bbsm X^2 &  XY & Y^2 \besm \bbsm 1 & 0 & \tfrac{\ep}{3} \\ 0 & -\tfrac{\ep}{3} & 0 \\ \tfrac{\ep}{3} & 0 & \ep^2 \besm \bbsm s^2 \\ st \\ t^2 \besm \\
    &=(Xt-Ys) \left( X^2(s^2+\tfrac{\ep t^2}{3})-\tfrac{\ep}{3}XYst+Y^2(\tfrac{\ep s^2}{3}+\ep^2t^2) \right) \\
    &=(Xt-Ys) \left( (Xs-Y\ep t)^2 - \tfrac{\ep}{-3} (Xt-Ys)^2 \right).
\end{align*}
We note that $\tfrac{\ep}{-3}$ is a square in $\bF_q$ if and only if $q \equiv 2\mod 3$.
Thus if $q \equiv 1 \mod 3$ then $a_{3,1}(L)=PG(1,q)$ and $a_1(L)= a_2(L)= a_{3,3}(L)= \emptyset$. If $q \equiv 2 \mod 3$ then $a_{3,3}(L)=PG(1,q)$ where as $a_1(L)= a_2(L)= a_{3,1}(L)= \emptyset$. Therefore, $|S\cap O_1|=0$, $|S\cap O_2|=0$, $|S\cap O_3|=\tfrac{(1-\mu)(q+1)}{6}$, $|S\cap O_4|=\tfrac{(1+\mu)(q+1)}{2}$, and $|S\cap O_5|=\tfrac{(1-\mu)(q+1)}{3}$.\\

(ix)-(x) (unisecants not in osculating planes). \\ 

(ix) Let $L\in \fO_{51}$ with associated binary quartic form $\varphi_L(X,Y)=X^2(X^2-\ep Y^2)$. Here 
\begin{align*}
    \varphi_{s,t}(X,Y)&= (Xt-Ys) h^L_{(s,t)}(X,Y)\\
    &=(Xt-Ys) \bbsm X^2 &  XY & Y^2 \besm \bbsm -6 & 0 & 2\ep \\ 0 & 2\ep & 0 \\ 2\ep & 0 & 0 \besm \bbsm s^2 \\ st \\ t^2 \besm \\
    &=2\ep (Xt-Ys) (s^2Y^2 + st XY+ X^2(t^2-\tfrac{3}{\ep}s^2)) .
\end{align*}
If $(s,t)=(0,1)$ then $\varphi_{s,t}=X^3$. If $(s,t)=(1,t)$ then 
\[ \tfrac{\varphi_{1,t}(X,Y)}{Xt-Y}=\tfrac{\ep}{2}
\left( 2Y + X\left[ t+ 2\sqrt{\tfrac{3}{\ep}- \tfrac{3t^2}{4} } \right] \right)
 \left(2Y + X \left[ t- 2\sqrt{\tfrac{3}{\ep}- \tfrac{3t^2}{4}}\right] \right) .\]
Since $\tfrac{t^2}{4}-\tfrac{1}{\ep}\neq 0$ for all $t\in \bF_q$, we have $a_2(L)=\emptyset$. Among the values of $t\in \bF_q$, the expression $(\tfrac{t^2}{4}-\tfrac{1}{\ep})$ is a square for $\tfrac{q-1}{2}$ choices of $t$, and a non-square for $\tfrac{q+1}{2}$ choices of $t$.
Therefore, the number of values of $t\in \bF_q$  for which $\sqrt{-3(\tfrac{t^2}{4}-\tfrac{1}{\ep})}\in \bF_q$ is $\tfrac{q-\mu}{2}$. Thus $a_1(L)=\{(0,1)\}$, $|a_{3,3}(L)|= \tfrac{q-\mu}{2}$ and $|a_{3,1}(L)|=q-\tfrac{q-\mu}{2}=\tfrac{q+\mu}{2}$. Therefore, $|S\cap O_1|=1$, $|S\cap O_2|=0$, $|S\cap O_3|=\tfrac{q-\mu}{6}$, $|S\cap O_4|=\tfrac{q+\mu}{2}$, and $|S\cap O_5|=\tfrac{q-\mu}{3}$.\\

(x) Let $L\in \fO_{52}$ with associated binary quartic form $\varphi_L(X,Y)=X^2Y(Y-X)$. Here 
\begin{align*}
    \varphi_{s,t}(X,Y)&= (Xt-Ys) h^L_{(s,t)}(X,Y)\\
    &=(Xt-Ys) \bbsm X^2 &  XY & Y^2 \besm \bbsm 0 & -6 & 4 \\ -6 & 4 & 0 \\ 4 & 0 & 0 \besm \bbsm s^2 \\ st \\ t^2 \besm \\
    &=2(Xt-Ys)(2s^2Y^2 + s XY(2t-3s)+ X^2t(2t-3s) . 
\end{align*}
If $(s,t)=(0,1)$, then $\varphi_{s,t}=X^3$. If $(s,t)=(1,t)$, $t\in \bF_q$, then $\tfrac{\varphi_{1,t}(X,Y)}{Xt-Y}$ equals
{\small \[\tfrac{1}{4}\left(4Y - X\left[3-2t+ \sqrt{-3\{ (2t-1)^2-4 \} }\right] \right) \left(4Y - X\left[3-2t - \sqrt{-3\{ (2t-1)^2-4 \} }\right]\right)  .\]}
Here $a_2(L)=\{0,1,3/2,-1/2\}$, and  $a_1(L)=\{(0,1)\}$. We have $a_{3,3}(L)=\{t\in \bF_q \colon -3((2t-1)^2-4) \text{ is a non-zero square}\}$ and $a_{3,1}(L)=\{t\in \bF_q \colon -3((2t-1)^2-4) \text{ is a non-square}\}$. Thus $|a_{3,1}(L)|+|a_{3,3}(L)|=q-4$.

The number of solutions $(t,\lambda)$ in $\bF_q$ of the equation $-3 ((2t-1)^2-4)=\lambda^2$ is $q-1$, if $q\equiv 1 \mod 3$ and $q+1$, if $q\equiv -1 \mod 3$. Among these we also have $6$ solutions  $(0,\pm 1),(1,\pm 1),(-1/2,0),(3/2,0)$ with $t=0,1,3/2,-1/2$. This gives $\tfrac{q-1-6}{2}=\tfrac{q-7}{2}$ and $\tfrac{q+1-6}{2}=\tfrac{q-5}{2}$ values of $t\in a_{3,3}(L)$ in the cases $q\equiv 1$ and  $q\equiv -1 \mod 3$, respectively. Thus we have $|a_{3,3}(L)|=\tfrac{q-\mu-6}{2}$ and $|a_{3,1}(L)|=q-4-\tfrac{q-\mu-6}{2}=\tfrac{q+\mu-2}{2}$. Hence, $|S\cap O_1|=1$, $|S\cap O_2|=2$, $|S\cap O_3|=\tfrac{q-\mu-6}{6}$, $|S\cap O_4|=\tfrac{q+\mu-2}{2}$, and $|S\cap O_5|=\tfrac{q-\mu}{3}$.
\eep

\subsection{Proof of main theorem:}
Let $L$ be a generic line and $\varphi_L$ be the quartic form associated with it. Let $S$ denote the set of $(q+1)$ points of $L$. By Theorem \ref{thm_elliptic}, $\nu_L =  (\# E_L(\bF_q)-\eta(L))/2$, where $E_L$ is the elliptic curve associated with $L$ as defined in (\ref{eq:E_L_def}). Let 
\[\eta_L:=\begin{cases} i &\text{if $\varphi_L \in \mF_i$ for $i=1,2,4$}\\
 0 &\text{if $\varphi_L$ is in $\mF_2'$ or $\mF_4'$}, \end{cases}\]
as defined in (\ref{eq:eta_def}).
Then, by Proposition \ref{gen_prop}, we have 
 \begin{enumerate}
        \item $|S\cap O_1|=0$,
        \item $|S\cap O_2|=\eta_L$,
        \item $|S\cap O_3|=\tfrac{1}{3} \left( \tfrac{\# E_L(\bF_q)-\eta(L)}{2}-\eta_L \right)=\tfrac{\# E_L(\bF_q)-3\eta_L}{6}$,
        \item $|S\cap O_4|=q+1- \tfrac{\# E_L(\bF_q)-\eta(L)}{2}-\eta_L=q+1-\frac{\#E_L(\bF_q)+\eta_L}{2}$, \item $|S\cap O_5|= \tfrac{1}{3}\left(2\tfrac{\# E_L(\bF_q)-\eta(L)}{2}+\eta_L\right)=\tfrac{\# E_L(\bF_q)}{3}$. 
    \end{enumerate}
This proves the theorem.
\hfill $\square$


\bibliographystyle{amsplain}

\end{document}